\newcommand \C[1]{{\mathcal #1}}
\newcommand \wti[1]{{\widetilde {#1}}}
\newcommand \bC{{\mathbb C}}
\newcommand \bH{{\mathbb H}}
\newcommand \bR{{\mathbb R}}
\newcommand \bZ{{\mathbb Z}}
\newcommand \bQ{{\mathbb Q}}
\newcommand\ep{{\epsilon}}
\newcommand\om{{\omega}}
\newcommand\al{{\alpha}}
\newtheorem{theorem}{Theorem}[subsection]
\newtheorem{corollary}{Corollary}[subsection]
\newtheorem{lemma}{Lemma}[subsection]
\newtheorem{proposition}{Proposition}[subsection]
\newtheorem{definition}{Definition}[subsection]
\newtheorem{remark}{Remark}[subsection]
\newtheorem{ftheorem}{Theorem}{\bf}{\it}
\newtheorem{fcorollary}[ftheorem]{Corollary}{\bf}{\it}
\newcommand\Hom{\operatorname{Hom}}
\newcommand\End{\operatorname{End}}
\newcommand\Ind{\operatorname{Ind}}
\newcommand\tr{\operatorname{tr}}
\newcommand\triv{\mathsf{triv}}
\newcommand\sgn{\mathsf{sgn}}
\newcommand\refl{\mathsf{refl}}
\newcommand\sem{\mathsf{ss}}
\newcommand\Irr{\mathsf{Irr}}
\newcommand\good{\mathsf{good}}
\newcommand\cc{\mathsf{cc}}
\newcommand\el{\mathsf{ell}}
\newcommand\res{\mathsf{res}}
\newcommand\rcc{\mathsf{rcc}}
\newcommand\Tr{\mathsf{Tr}}
\def\<{\langle} 
\def\>{\rangle}
\def\i{^{-1}}
\def\s{\sigma}
\def\a{\alpha}
\def\co{\mathcal O}
\def\d{\delta}
\newcommand{\supp}{\text {\rm supp}}
\numberwithin{equation}{subsection}
\begin{document}

\title[The cocenter of graded affine Hecke algebra]{The cocenter of graded affine Hecke algebra and the density
theorem}

\author{Dan Ciubotaru}
        \address[D. Ciubotaru]{Dept. of Mathematics\\ University of
          Utah\\ Salt Lake City, UT 84112}
        \email{ciubo@math.utah.edu}

\author{Xuhua He}
\address[X. He]{Dept. of Mathematics\\Hong Kong University of Science
  and Technology\\Clear Water Bay, Kowloon, Hong Kong}
\email{maxhhe@ust.hk}

\keywords{graded affine Hecke algebra, cocenter, density theorem,
  trace Paley-Wiener theorem}

\subjclass[2010]{20C08, 22E50}

%\thanks{}

\begin{abstract}We determine a basis of the (twisted)
  cocenter of graded affine Hecke algebras with arbitrary
  parameters. In this setting, we prove that the kernel of the (twisted) trace map is
  the commutator subspace (Density theorem) and that the image is the
  space of good forms (trace Paley-Wiener theorem). 
\end{abstract}

\maketitle

\section{Introduction}\label{sec:0}
The affine Hecke algebras arise naturally in the theory
of smooth representations of reductive $p$-adic groups. Motivated by
the relation with abstract harmonic analysis for $p$-adic groups (such
as the trace Paley-Wiener theorem and the Density theorem \cite{BDK,Ka,Fl}), as
well as the study of affine Deligne-Lusztig varieties (such as the ``dimension=degree'' theorem \cite[Theorem 6.1]{H99}), it is important to describe the cocenter of affine Hecke algebras, i.e.,
the quotient of the Hecke algebra by the vector subspace spanned by
all commutators. In this paper, we solve the related problem for
the graded affine Hecke algebras introduced by Lusztig \cite{L1}. 

\

To describe the results, let $\bH$ be the
graded Hecke algebra attached to a simple root system $\Phi$ and complex
parameter function $k$, Definition \ref{d:graded}. As a
$\bC$-vector space, $\bH$ is isomorphic to $\bC[W]\otimes S(V)$, where
$W$ is the Weyl group of $\Phi,$ and $S(V)$ is the symmetric algebra
of $V$, the underlying (complex) space of the root system.

Let $\delta$ be
an automorphism of order $d$ of the Dynkin diagram of $\Phi$ which
preserves the parameters $k$, and form the extended algebra
$\bH'=\bH\rtimes\<\delta\>.$  (The automorphism $\delta$ could of
course be trivial.) The cocenter $\bar \bH'=\bH'/[\bH',\bH']$ of $\bH'$ and the $\d$-twisted cocenter $\bar\bH_\delta=\bH/[\bH,\bH]_\delta$ of $\bH$ are related in section \ref{sec:3.1}. 

In section \ref{span-set setting}, we construct a set of elements
$\{w_C f_{J_C,i}\}$ of $\bH$, where $C$ runs over the $\delta$-twisted
conjugacy classes in $W$. To each class $C$, we attach  a
$\delta$-stable subset $J_C$ of the Dynkin diagram, and pick $w_C\in C\cap W_{J_C}$, where $W_{J_C}$ is the parabolic reflection subgroup of $W$ defined by $J_C$; the elements $f_{J_C,i}$ are chosen in $S(V)$, see \ref{span-set setting} for the precise definitions. Our
first result gives a basis for $\bar\bH_\delta$ (and hence a basis for
$\bar \bH'$), which is independent of the parameter function.
\begin{comment}
Let $\C I^\delta$ be the
set of all $\delta$-stable subsets of the Dynkin diagram, and choose a
set $\C I^\delta_0$ of representatives in $\C I^\delta$ under
appropriate conjugation by $W$, see section \ref{span-set setting} for the precise
definitions. For every
$\delta$-twisted conjugacy class $C$ in $W$, fix a minimal
element $J_C\in\C I^\delta_0$ such that $C$ meets the parabolic reflection
subgroup $W_{J_C}$, and fix an element $w_C$ in the intersection. 
Let $\{f_{J_C,i}\}$ be a
basis of $S(V^{W_{J_C}\rtimes\<\delta\>})^{N_{W,\delta}(W_{J_C})},$ where
$N_{W,\delta}(W_{J_C})$ is the $\delta$-twisted centralizer of $W_{J_C}.$ 
\end{comment}

\begin{ftheorem}\label{t:1}
The set $\{w_C f_{J_C,i}\}$ is a basis for the vector space $\bar\bH_\delta.$
\end{ftheorem}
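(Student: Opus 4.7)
The plan is to prove the theorem in two stages: spanning and linear independence. I would work with the multiplicative filtration $\bH^{\le n} := \bC[W]\otimes S(V)^{\le n}$; Lusztig's relation lowers the $S(V)$-degree by one, so $\operatorname{gr}\bH\cong S(V)\rtimes W$. A direct computation in $\operatorname{gr}\bH$ rewrites the twisted commutator relation $ab\equiv b\delta(a)$ as
\[
fw \equiv v(f)\cdot \bigl(vw\delta(v)^{-1}\bigr) \pmod{[\operatorname{gr}\bH,\operatorname{gr}\bH]_\delta}
\]
for all $v, w \in W$ and $f \in S(V)$; this will be the workhorse.

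For spanning, I would apply the displayed relation in two steps: first, choose $v$ to conjugate $w$ to the chosen representative $w_C \in W_{J_C}$ of its $\delta$-twisted conjugacy class; then average the resulting coefficient $f$ over the $\delta$-twisted centralizer $Z_{W,\delta}(w_C)$, placing $f$ in $S(V)^{Z_{W,\delta}(w_C)}$. To descend further to $S(V^{W_{J_C}\rtimes\<\delta\>})^{N_{W,\delta}(W_{J_C})}$, I would invoke the minimality of $J_C$: this makes $w_C$ $\delta$-elliptic inside $W_{J_C}$, so the fixed subspace $V^{w_C}$ equals $V^{W_{J_C}}$, and combining with $\delta$-invariance yields $V^{W_{J_C}\rtimes\<\delta\>}$. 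The centralizer $Z_{W,\delta}(w_C)$ then decomposes (up to obvious pieces) as $W_{J_C}$ times a lift of $N_{W,\delta}(W_{J_C})/W_{J_C}$, matching the claimed invariant space. A standard induction on filtration degree then transfers spanning from $\operatorname{gr}\bar\bH_\delta$ back to $\bar\bH_\delta$, since the Lusztig correction terms sit in strictly lower degree.

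For linear independence, I would construct separating trace functionals. To each $(C,i)$ one associates a $\delta$-twisted trace $\tr_{\pi(C,i)}$ of a parabolically induced representation, built from an irreducible constituent of $\bH_{J_C}\rtimes\<\delta\>$ with central character matched to $f_{J_C,i}$ and $w_C$-isotypic part picked out. With a natural partial order on the classes (say by $\dim V^{W_{J_C}\rtimes\<\delta\>}$), the pairing matrix $\langle\tr_{\pi(C,i)}, w_{C'} f_{J_{C'},i'}\rangle$ should be triangular with nonzero diagonal, yielding linear independence and a first step toward the trace Paley-Wiener theorem announced in the abstract. The main obstacle, in my view, is the spanning reduction from $S(V)^{Z_{W,\delta}(w_C)}$ to the much smaller $S(V^{W_{J_C}\rtimes\<\delta\>})^{N_{W,\delta}(W_{J_C})}$, which depends on a careful analysis of how $Z_{W,\delta}(w_C)$ sits inside $W\rtimes\<\delta\>$ for minimally-supported elliptic $w_C$; independence via induced traces should be comparatively routine once this is in place.
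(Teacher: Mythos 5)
Your outline follows the same overall route as the paper (reduce to the associated graded algebra with $k=0$, conjugate to $w_C$, average, then separate by traces of parabolically induced modules), but the two steps that you either gesture at or declare routine are exactly where the substance lies, and as written they are gaps. In the spanning argument, invariance of $f$ under $Z_{W,\delta}(w_C)$ does not by itself let you replace $S(V)$ by $S(V^{W_{J_C}\rtimes\<\delta\>})$: the mechanism is the twisted-commutator identity $u\,w_C f=[v,w_C f]_\delta$ with $v-w_C\delta(v)=u$, valid because $1-w_C\delta$ is invertible on a complement $U'$ of $V^{W_{J_C}\rtimes\<\delta\>}$ (this is where $\delta$-ellipticity of $w_C$ in $W_{J_C}$ enters); this kills $U'w_C S(V)$ modulo $[\bH_0,\bH_0]_\delta$ and so disposes of the polynomial directions transverse to $V^{W_{J_C}\rtimes\<\delta\>}$, a step your fixed-space observation alone does not deliver. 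Moreover, the passage from $Z_{W,\delta}(w_C)$-invariants to $N_{W,\delta}(W_{J_C})$-invariants is not a formal analysis of the centralizer: it is the identity $W_{J_C}Z_{W,\delta}(w_C)=N_{W,\delta}(W_{J_C})$ (Proposition \ref{center-w}), whose proof rests on the theorem that $\delta$-elliptic classes of $W_{J}$ never fuse in $W$ (Theorem \ref{neverfuse}), itself depending on the Geck--Pfeiffer/He theory of minimal length elements. You correctly flag this as the main obstacle, but you do not supply it, so the spanning half is incomplete at that point.

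The linear independence half is under-estimated rather than merely incomplete. The triangularity over the $J_C$'s (your ordering by $\dim V^{W_{J_C}\rtimes\<\delta\>}$) is indeed how the paper argues, via the induced-trace formula $\Tr(w f\delta,\Ind_{\bH'_{J'}}^{\bH'}M)=|N_{W,\delta}(W_{J})/W_{J}|\,\Tr(wf\delta,M)$ when $J=J'$ and $=0$ when $C\cap W_{J'}=\emptyset$ (Proposition \ref{trace-formula}, which again needs Theorem \ref{neverfuse}). But the diagonal block is not routine: for a fixed $J'$ you must separate the distinct $\delta$-elliptic classes of $W_{J'}$ by twisted traces of $\delta$-stable $\bH_{J'}^\sem$-modules, uniformly in the complex parameter $k$. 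In the paper this requires either Theorem \ref{t:W-ind} (restrictions to $W$ of tempered modules with real central character form a basis of $R(W')_\bQ$, available only for real-valued $k$) or the explicit case-by-case construction of elliptic families in section \ref{sec:9} (Theorem \ref{t:explicit-basis}), whose $W$-structure is independent of $k$. Your proposal of induced modules ``with central character matched to $f_{J_C,i}$ and $w_C$-isotypic part picked out'' does not explain how to produce such separating $\delta$-stable modules; the unramified characters $\chi_\nu$ only separate the $f_{J',i}$'s, not the classes $C$ with the same $J_C$. So this step, which you call comparatively routine, is the other genuine gap.
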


The proof that the set $\{w_C f_{J_C,i}\}$ spans $\bar\bH_\delta$  relies
of certain results about $\delta$-twisted conjugacy classes in the
Weyl group, section \ref{sec:4}, as well as the use of a filtration in
$\bH$ and its associated graded object, which allows us to reduce the
proof to the case when the parameter function is identically $0$. The
case $k\equiv 0$ is proved directly in Proposition \ref{p:spank=0}.

For the linear independence we use the representation theory of $\bH$
to produce modules whose traces ``separate'' the elements $w_C
f_{J_C,i}.$ This is done in conjunction with a proof of the Density
theorem and (twisted) trace
Paley-Wiener theorem  for graded Hecke
algebras. More precisely, let $R^\delta(\bH)$ be the $\bZ$-span of the
$\delta$-stable irreducible $\bH$-modules $\Irr^\delta\bH$, and let
$R^*_\delta(\bH)=\Hom_\bC(R^\delta(\bH),\bC)$ be the (complex) dual
space. The twisted trace map is a linear map
$$\tr^\delta:\bar\bH_\delta\to R^*_\delta(\bH),$$
see section \ref{sec:4.1}. If $R^*_\delta(\bH)_\good$ is the subspace of good forms (Definition
\ref{d:goodforms}), the image of the trace map is automatically in
$R^*_\delta(\bH)_\good.$ 

\begin{ftheorem}\label{t:2}
The map $\tr^\delta:\bar\bH_\delta\to R^*_\delta(\bH)_\good$ is a linear isomorphism.
\end{ftheorem}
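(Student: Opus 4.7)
The theorem has two parts: injectivity of $\tr^\delta$ (twisted density) and surjectivity onto $R^*_\delta(\bH)_\good$ (twisted trace Paley--Wiener). I would treat them in succession, using Theorem A as the key input, since it supplies the spanning set $\{w_C f_{J_C,i}\}$ of $\bar\bH_\delta$ independently of any representation-theoretic argument.

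For injectivity, the plan is to produce, for each $\delta$-twisted conjugacy class $C$, a family of $\delta$-stable $\bH$-modules (appropriately parabolically induced from $\bH_{J_C}$ with a parameter $\lambda$ varying in the dual of the $W_{J_C}$-fixed subspace) whose characters pair with $\{w_{C'}f_{J_{C'},j}\}$ in a block-triangular fashion. A Mackey-type character formula writes the trace of $wf$ on such an induced module as a sum over cosets in $W/W_{J_{C'}}$ of contributions from elements that $\delta$-twist-conjugate $w$ into $W_{J_{C'}}$, weighted by values of $f$ at $W$-translates of $\lambda$. The minimality of $J_C$ among parabolic supports of $C$ forces this sum to vanish when $J_C$ is not $\delta$-conjugate into $J_{C'}$, yielding triangularity with respect to the partial order $C\preceq C'\iff J_C\subseteq J_{C'}$ (up to $\delta$-twisted conjugacy). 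On the diagonal the trace restricts to the non-degenerate pairing that defines the basis $\{f_{J_C,i}\}$ of $S(V^{W_{J_C}\rtimes\<\delta\>})^{N_{W,\delta}(W_{J_C})}$, and evaluation at generic $\lambda$ separates these. This establishes injectivity and, simultaneously, the linear independence part of Theorem A.

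For surjectivity, I would induct on the semisimple rank of $\Phi$ using a filtration of $R^*_\delta(\bH)_\good$ by cuspidal support. For every $\delta$-stable proper $J\subsetneq\Pi$, parabolic induction $\Ind_{\bH_J}^\bH$ gives a transfer map $\bar\bH_{J,\delta}\to\bar\bH_\delta$; dually, good forms restrict to good forms on $\bH_J$, this compatibility being essentially the defining property of $R^*_\delta(\bH)_\good$. The induction hypothesis therefore places all good forms arising from proper parabolics in the image of $\tr^\delta$. What remains is surjectivity on the elliptic quotient, which by Theorem A is spanned by those basis elements $w_C f_{J_C,i}$ with $C$ a $\delta$-elliptic conjugacy class (i.e., $J_C$ $\delta$-conjugate to $\Pi$). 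This elliptic surjectivity is equivalent to the non-degeneracy of the twisted elliptic pairing between $\bar\bH_\delta^\el$ and the $\bZ$-span of $\delta$-stable elliptic $\bH$-modules, together with a dimension count furnished by Theorem A.

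The main obstacle will be this final elliptic surjectivity, which is the graded Hecke algebra analogue of Arthur's orthogonality relations for elliptic characters. It requires three inputs: the existence of enough $\delta$-stable essentially tempered representations with prescribed central characters; the non-degeneracy of their twisted Euler--Poincar\'e (elliptic) pairing; and a matching of dimensions with the elliptic cocenter from Theorem A. A further technical subtlety is that the parameter function $k$ is an arbitrary complex function, so central characters may collide at special values; this should be handled by a specialization/deformation argument reducing to the case of generic $k$ and, in the limit, to the case $k\equiv 0$, exactly as in the strategy used for the spanning part of Theorem A via the filtration on $\bH$ and its associated graded.
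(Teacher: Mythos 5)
Your overall plan has the same skeleton as the paper's argument (injectivity via the spanning set of Proposition \ref{p:span} paired against parabolically induced modules through the Mackey-type trace formula of Proposition \ref{trace-formula}, with induction over minimal $J'$; surjectivity via a BDK-style reduction to proper parabolic subalgebras as in Theorem \ref{t:PW}), but there is a genuine gap at the point you describe as the ``diagonal'' block. When $J'=J_C$, the surviving sum runs over \emph{all} twisted conjugacy classes $C$ with $J_C=J'$, i.e.\ over all $\delta$-elliptic classes of $W_{J'}$, and generic unramified parameters $\lambda$ only separate the $f_{J',i}$-direction inside $S(V^{W_{J'}\rtimes\<\delta\>})^{N_{W,\delta}(W_{J'})}$. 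To separate the $w_C$-direction you must exhibit $\delta$-stable modules $\sigma$ of $\bH_{J'}^{\sem}$ whose twisted traces at the elements $w_C\delta$ form an invertible matrix; when $J'=I$ (the genuinely elliptic classes) there is no unramified parameter at all, so no amount of varying $\lambda$ or appealing to triangularity produces this. This is exactly the hard representation-theoretic input of the paper: Theorem \ref{t:W-ind} (Solleveld's result, valid only for real-valued $k$) or, for arbitrary complex parameters, the explicit case-by-case constructions of Theorem \ref{t:explicit-basis} in section \ref{sec:9}, whose whole point is that the modules have $W$-structure independent of $k$ and traces at the $w_C$ reducing to Weyl group character values. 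You acknowledge an input of this kind only in the surjectivity step and defer it as ``the main obstacle,'' but it is already indispensable for injectivity; moreover your suggested remedy --- specialization/deformation to generic $k$ or to $k\equiv 0$ --- works for the cocenter (that is how the paper proves spanning) but not on the representation side: irreducible modules become reducible at special parameters and the elliptic theory of $\bH_0$ degenerates, which is precisely why the paper builds families depending linearly on $k$ with constant $W$-restriction rather than deforming.

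On surjectivity, your ``filtration by cuspidal support'' is not self-supporting as stated: decomposing an arbitrary good form into a part coming from proper parabolics plus an elliptic remainder is the content of the operator calculus $T_J$, $A_\ell$, $\C A$ of Lemmas \ref{l:ind-restr}--\ref{l:Toper}, and it hinges on the finiteness of $\Irr^\delta(\bH)_\el$ (Lemma \ref{l:finite-elliptic}), which the paper derives from the cocenter spanning set via the adjoint maps $\wti r_J$, $\wti T_J$ of Lemma \ref{l:adjointtrace}. Granting that finiteness, the elliptic step needs only linear independence of irreducible twisted characters to match a good form by a trace form on the finite elliptic set; no Arthur-type orthogonality or non-degeneracy of a twisted Euler--Poincar\'e pairing is required, so that part of your outline is both unproven and stronger than necessary. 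Finally, note that using ``Theorem A'' as a black box for a dimension count is delicate: in the paper the linear independence half of Theorem \ref{t:basis} is proved simultaneously with injectivity, using the same separating modules, so only the spanning statement (Proposition \ref{p:span}) is available as an independent input --- which you do use correctly, but the dimension count you invoke for the elliptic quotient is not.
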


This is a graded affine Hecke algebra analogue of results from
$p$-adic groups, \cite{BDK},
\cite{Ka}, and \cite{Fl}. However, our proof of injectivity (which
uses the explicit spanning set of $\bar \bH_\delta$) is essentially different.
Our approach also leads to the following result on the dimension of the space
of $\delta$-elliptic representations $\overline R^\delta_0(\bH)$
(\ref{e:graded-Groth}). 

\begin{ftheorem}\label{t:3}
The dimension of the $\delta$-twisted elliptic representation space
$\overline R^\delta_0(\bH)$ is equal to the number of
$\delta$-twisted elliptic conjugacy classes in $W$.
\end{ftheorem}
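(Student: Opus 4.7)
The plan is to combine Theorems A and B by isolating the ``elliptic part'' of the cocenter and matching it, via the trace map, with the dual of $\overline R^\delta_0(\bH)$. I would begin by setting $\bar\bH_\delta^{\mathrm{par}}$ to be the subspace of $\bar\bH_\delta$ spanned by those basis elements $w_C f_{J_C,i}$ of Theorem A for which $J_C$ is a proper $\delta$-stable subset of the Dynkin diagram. A $\delta$-twisted conjugacy class $C$ is $\delta$-elliptic precisely when its minimal $J_C$ equals the full Dynkin diagram, in which case $W_{J_C}=W$; then $V^{W\rtimes\<\delta\>}\subseteq V^W=0$, so $S(V^{W\rtimes\<\delta\>})^{N_{W,\delta}(W)}=\bC$ and a single basis element $w_C$ is attached to each elliptic class. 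Theorem A therefore gives
$$\dim\bigl(\bar\bH_\delta\,\big/\,\bar\bH_\delta^{\mathrm{par}}\bigr)\;=\;\#\{\delta\text{-elliptic conjugacy classes in }W\}.$$

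Next, I would identify this quotient with $\bigl(\overline R^\delta_0(\bH)\bigr)^*$ through the trace map. Since each $w_C f_{J_C,i}$ with $J_C$ proper lies in the parabolic subalgebra $\bH_{J_C}$, the subspace $\bar\bH_\delta^{\mathrm{par}}$ is contained in the sum of images $\bar\bH_{J,\delta}\to\bar\bH_\delta$ taken over proper $\delta$-stable $J$; applying Theorem A inside each such $\bH_J$ would give the reverse inclusion. By Frobenius reciprocity, for $h\in\bH_J$ and an $\bH_J$-module $V$ the trace of $h$ on $\Ind_J^\bH V$ equals the trace of $h$ on $V$; dually, $\tr^\delta(\bar\bH_\delta^{\mathrm{par}})$ is then precisely the subspace of $R^*_\delta(\bH)_\good$ consisting of good forms vanishing on $\overline R^\delta_0(\bH)$. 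Combined with the isomorphism of Theorem B, this yields an induced isomorphism
$$\bar\bH_\delta\,\big/\,\bar\bH_\delta^{\mathrm{par}}\;\xrightarrow{\sim}\;\bigl(\overline R^\delta_0(\bH)\bigr)^*,$$
and Theorem C follows by comparing dimensions with the previous paragraph.

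The main obstacle will be the compatibility of the two ``parabolic'' subspaces: verifying that $\bar\bH_\delta^{\mathrm{par}}$, as singled out combinatorially by Theorem A, coincides with the image of the parabolic cocenters, and that the trace map exchanges it with the annihilator of $\overline R^\delta_0(\bH)$. The easy direction relies only on Frobenius reciprocity, but the converse demands a transitivity argument for the basis construction when passing from $\bar\bH_{J,\delta}$ to $\bar\bH_\delta$ for a proper $\delta$-stable $J$ --- a $\delta$-twisted conjugacy class in $W_J$ need not remain a single class in $W$ --- together with Theorem B applied also to each proper parabolic subalgebra, or a parallel dimension count directly from Theorem A.
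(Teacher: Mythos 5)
Your decomposition of the basis of Theorem A into an elliptic part ($J_C=I$, one element $w_C$ per $\delta$-elliptic class, since $V^{W}=0$ forces $S(V^{W\rtimes\<\delta\>})^{N_{W,\delta}(W)}=\bC$) and a parabolic part is fine, and it does yield the lower bound cheaply once you add a fact you never state: for $C$ elliptic, $\Tr(w_C\delta,\Ind_{\bH_{J'}}^{\bH}M)=0$ for every proper $\delta$-stable $J'$ (the easy half of Proposition \ref{trace-formula}), so the forms $\tr^\delta(w_C)$ lie in $R^*_\delta(\bH)_0=(\overline R^\delta_0(\bH))^*$ and are linearly independent by Theorems A and B. But the identity you lean on for everything else --- ``by Frobenius reciprocity, for $h\in\bH_J$ the trace of $h$ on $\Ind_{\bH_J}^{\bH}V$ equals the trace of $h$ on $V$'' --- is false: already for $h=1$ the left-hand side is $|W/W_J|\dim V$. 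Frobenius reciprocity is a statement about Hom spaces, not traces; the correct statements are Lemma \ref{l:adjointtrace}, $\Tr(h,i_J(\pi))=\Tr(\wti r_J(h),\pi)$ with the twisted restriction $\wti r_J$, and Proposition \ref{trace-formula}, which applies only to the special elements $wf\delta$ with $w$ $\delta$-elliptic in $W_J$ and $f$ suitably invariant, and carries the factor $|N_{W,\delta}(W_J)/W_J|$.

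Because of this, the key assertion of your second step --- that $\tr^\delta(\bar\bH_\delta^{\mathrm{par}})$ meets $R^*_\delta(\bH)_0$ trivially, equivalently that an elliptic good form, written via Theorem B as $\tr^\delta\bigl(\sum_{C,i}a_{C,i}w_Cf_{J_C,i}\bigr)$, must have $a_{C,i}=0$ whenever $J_C\neq I$ --- is exactly the hard point and is left unproved; you name it as ``the main obstacle'' but supply no mechanism for it. Establishing it requires, for each proper $\delta$-stable $J'$, evaluating against $\Ind_{\bH_{J'}}^{\bH}(\sigma\otimes\chi_\nu)$ and having enough modules $\sigma$ of $\bH_{J'}^\sem$ whose twisted traces separate the elements $w_C$ with $J_C=J'$; this is precisely the role of Theorem \ref{t:W-ind} (real parameters) or of the explicit modules of Theorem \ref{t:explicit-basis} (arbitrary complex parameters), i.e.\ one ends up re-running the induction in the proof of Theorem \ref{t:basis} rather than quoting Theorems A and B as black boxes. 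The paper proceeds differently: the upper bound is obtained without Theorem B, from the spanning set together with the operators $A$ and $\wti A$ (Lemma \ref{l:finite-elliptic}, Corollary \ref{c:dimension-elliptic}), and the lower bound comes from the case-by-case construction of Section \ref{sec:9}; note that this construction is itself an ingredient in the proof of Theorem A, so deducing Theorem C from A and B does not in fact bypass it.
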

When $\delta=1$ and the parameter function $k$ is positive, this
result was previously known from \cite{OS}, where it was obtained by
different methods.
Using the explicit description of the cocenter $\bar\bH_\delta,$ we
can argue that the dimension is at most the number of
$\delta$-elliptic conjugacy classes. To show equality, we construct
explicitly in section \ref{sec:9}, via a case-by-case analysis, a set
of linearly independent elements of $\overline R^\delta_0(\bH)$ with
the desired cardinality and other interesting properties, see Theorem \ref{t:explicit-basis}.

Finally, using Clifford theory for $\bH'$ and the relation between
$\bar\bH'$, $R(\bH')$ and $\bar\bH_{\delta^i}$, $R^{\delta^i}(\bH)$ ($i=1,d$)
respectively, we obtain:

\begin{fcorollary}
The trace map $\tr:\bar\bH'\to R(\bH')^*_\good$ is a linear isomorphism.
\end{fcorollary}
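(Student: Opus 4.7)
The plan is to reduce the Corollary to Theorem B by decomposing both $\bar\bH'$ and $R(\bH')^*_\good$ along the $\bZ/d\bZ$-grading induced by the powers of $\delta$, and then invoking Theorem B, with $\delta$ replaced by $\delta^i$ for each $i=0,\ldots,d-1$, on each graded piece.

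First, I would use the decomposition $\bH'=\bigoplus_{i=0}^{d-1}\bH\delta^i$. The identity
\[
(h)(h'\delta^i)-(h'\delta^i)(h)=\bigl(hh'-h'\delta^i(h)\bigr)\delta^i,
\]
together with a similar calculation for general $[h\delta^j,h'\delta^{i-j}]$, shows that the commutator subspace $[\bH',\bH']$ is $\bZ/d\bZ$-graded with its $i$-th homogeneous component equal to $[\bH,\bH]_{\delta^i}\cdot\delta^i$. This yields
\[
\bar\bH'\;\cong\;\bigoplus_{i=0}^{d-1}\bar\bH_{\delta^i},
\]
the relation already recorded in section \ref{sec:3.1}.

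Second, by Clifford theory for the cyclic crossed product $\bH'=\bH\rtimes\langle\delta\rangle$, every irreducible $\bH'$-module is of the form $\Ind_{\bH\langle\delta^e\rangle}^{\bH'}\tilde\pi$, where $\pi$ is an irreducible $\bH$-module with stabilizer $\langle\delta^e\rangle\subseteq\langle\delta\rangle$ (so $e\mid d$) and $\tilde\pi$ is an extension of $\pi$ determined by a choice of intertwiner $\pi\to\pi\circ\delta^e$. Because $\langle\delta\rangle$ is cyclic there is no nontrivial Schur cocycle, and the character of $\Ind\tilde\pi$ evaluated on $\bH\delta^i$ vanishes unless $e\mid i$; when $e\mid i$ it equals, up to a nonzero scalar, the $\delta^i$-twisted trace of $\pi$. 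Assembling these characters identifies
\[
R(\bH')^*_\good\;\cong\;\bigoplus_{i=0}^{d-1}R^*_{\delta^i}(\bH)_\good,
\]
and under both decompositions the trace map $\tr\colon\bar\bH'\to R(\bH')^*_\good$ splits as $\bigoplus_{i=0}^{d-1}\tr^{\delta^i}$.

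Each $\delta^i$ is again a Dynkin-diagram automorphism preserving the parameter function $k$, so Theorem B applies to give that $\tr^{\delta^i}\colon\bar\bH_{\delta^i}\to R^*_{\delta^i}(\bH)_\good$ is a linear isomorphism for every $i$; summing over $i$ produces the Corollary. The main obstacle will be checking that the ``good form'' condition on characters of $\bH'$ restricts, under the decompositions above, to exactly the good-form condition on each $R^*_{\delta^i}(\bH)$, and that the Clifford dictionary between $\Irr\bH'$ and the $\delta^i$-stable objects of $\Irr\bH$ introduces no cocycles or multiplicity defects that would spoil the grading-by-grading matching. Both points should reduce to routine bookkeeping in the cyclic case, but they form the substance of the reduction.
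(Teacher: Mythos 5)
Your overall strategy---splitting along the powers of $\delta$ and reducing to the twisted statement for $\bH$---is indeed the paper's strategy, but both decompositions you assert are wrong as stated, and the error sits precisely in the ``routine bookkeeping'' you postpone. First, $\bar\bH'$ is \emph{not} $\bigoplus_{i=0}^{d-1}\bar\bH_{\delta^i}$, and this is not ``the relation already recorded in section \ref{sec:3.1}'': Proposition \ref{cocenter-decomp} identifies the $i$-th graded piece of $\bar\bH'$ with $\bar\bH^{[i]}$, the $\Gamma$-\emph{coinvariants} of $\bar\bH_{\delta^i}$ (because $[\bH',\bH']\cap\bH\delta^i=\bigl((1-\delta)\bH+[\bH,\bH]_{\delta^i}\bigr)\delta^i$, which contains $(1-\delta)\bH\,\delta^i$ and not merely $[\bH,\bH]_{\delta^i}\delta^i$). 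For $\gcd(i,d)=1$ one has $(1-\delta)\bH\subset[\bH,\bH]_{\delta^i}$ and the two agree, but already for $i=0$ the piece is the coinvariant space $\bar\bH_\Gamma$, which is strictly smaller than $\bar\bH$ whenever $\delta$ acts nontrivially on the cocenter---and it does in general, since it permutes the basis elements $w_C f_{J_C,j}$ nontrivially. Dually, your claim that the character of an induced module on $\bH\delta^i$ equals ``up to a nonzero scalar, the $\delta^i$-twisted trace of $\pi$'' is inaccurate: by Lemma \ref{l:traceH'} it is the \emph{sum} of twisted traces over the $\Gamma$-orbit of $\pi$, so characters of $\bH'$ only detect the $\Gamma$-\emph{invariant} part of $R^*_{\delta^i}(\bH)$. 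Hence $R(\bH')^*_\good$ decomposes into the spaces $R^*_{\delta^i}(\bH)^\Gamma_\good$, not $\bigoplus_i R^*_{\delta^i}(\bH)_\good$. With your uncorrected pieces the two sides simply have different sizes, so the componentwise appeal to Theorem B cannot close.

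With the corrected pieces the reduction does go through, but it needs more than quoting Theorem B component by component. For surjectivity one uses that $\tr^{\delta^i}$ is $\Gamma$-equivariant, so Theorem \ref{t:PW} gives that the image of $\bar\bH^{[i]}$ is exactly $R^*_{\delta^i}(\bH)^\Gamma_\good$ (this is the paper's corollary to Theorem \ref{t:PW}). For injectivity the paper argues separately (corollary to Theorem \ref{t:density}): if all $\bH'$-traces of $h\delta^i$ vanish, Lemma \ref{l:traceH'} gives $\sum_{\gamma\in\Gamma}\gamma(h)\in[\bH,\bH]_{\delta^i}$ by the Density theorem, and then the identity $dh=\sum_{\gamma}\gamma(h)+\sum_{\gamma\neq1}(1-\gamma)(h)$ together with claim (a) in the proof of Proposition \ref{cocenter-decomp} shows $h\delta^i\in[\bH',\bH']$. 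Some argument of this kind (or an explicit identification of coinvariants with invariants under the equivariant isomorphism of Theorem B) is needed; as written, your proposal asserts the matching of graded pieces that is precisely the point to be proved.
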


\medskip

For Hecke algebras with real parameters, similar results were announced
recently by Solleveld \cite{So2}, as part of his calculations of
Hochschild homology. The proofs are based on
deep results from \cite{So3}, where a version of the
Aubert-Baum-Plymen conjecture, involving the extended quotient of
the ``first kind'', is proved. In particular, the paper uses a $\bQ$-basis of $R(\bH')_\bQ$ which depends analytically on the real parameter function. 

\smallskip

Our method is different from {\it loc.~cit.} and appears to be more
elementary. For example, based on knowledge of minimal
length elements, (twisted) elliptic elements of finite Weyl groups (section
\ref{sec:4}), and explicit basis of the elliptic space $\overline R^\d_0(\bH)$, constructed in
section \ref{sec:9}, we are able to handle arbitrary complex
parameters. We also obtain explicit $\bQ$-basis of $R(\bH')_\bQ$
depending linearly on the complex parameter function. It is not clear
to us if there is a connection between the basis in \cite{So2} and
the one constructed in the present paper. Finally, our approach seems to be
related naturally to the Aubert-Baum-Plymen conjecture
with the extended quotient of the ``second kind'' \cite{ABP}.

\section{Preliminaries}\label{sec:1}

\subsection{Root system}\label{sec:1.1} Let $\Phi=(V_0,R,V^\vee_0,R^\vee)$
be a semisimple real root system: 
\begin{enumerate}
\item[(i)] $V_0$ and $V^\vee_0$ are finite dimensional real vector spaces with a
perfect pairing $(~,~):V_0\times V^\vee_0\to \bR$;
\item[(ii)] $R\subset V\setminus\{0\}$ spans $V$;
\item[(iii)] there is a bijection $R\leftrightarrow R^\vee$,
  $\al\leftrightarrow \al^\vee$, such that $(\al,\al^\vee)=2$;
\item[(iv)] the reflection $s_\al: V_0\to V_0$,
  $s_\al(v)=v-(v,\al^\vee)\al$ (resp., $s_\al: V_0^\vee\to V_0^\vee$,
  $s_\al(v^\vee)=v^\vee-(\al,v^\vee)\al^\vee$) preserves $R$
  (resp. $R^\vee$) for every $\al\in R$. 
\end{enumerate}
We assume, in addition, that the root system is reduced ($\al\in R$
implies $2\al\notin R$) and crystallographic ($(\al,\beta^\vee)\in
\bZ$ for all $\al,\beta\in R$).

Let $W$ be the finite Weyl group, i.e., the subgroup of $GL(V)$
(identified to a subgroup of $GL(V^\vee)$ too) generated by $s_\al,$
$\al\in R.$ Fix a choice of positive roots $R^+\subset R$ with
corresponding positive coroots $R^{\vee,+}\subset R^\vee.$ 
Denote $\rho=\frac 12\sum_{\al\in R^+}\al$ and $\rho^\vee=\frac
12\sum_{\al\in R^+}\al^\vee.$
Let
$I\subset R^+$ be a basis of $R^+$, the simple roots.

For every subset $J\subset I$, let $R_J$ be the subset of $R$
generated by $J$, $R_J^\vee$ the corresponding coroots, $R_J^+=R^+\cap
R_J,$ $R_J^{\vee,+}=R^{\vee,+}\cap R^\vee_J,$ and $W_J$ the parabolic Weyl
group generated by $\{s_\al: \al\in J\}$. Let
$\Phi_J=(V_0,R_J,V^\vee_0,R^\vee_J)$ be the (nonsemisimple) subroot
system. Denote
\begin{equation*}
V_{0,J}=\bR\text{-span of } R_J,\ V^\vee_{0,J}=\bR\text{-span of }
R^\vee_J,\quad \Phi_J^\sem=(V_{0,J},R_J,V^\vee_{0,J},R^\vee_J).
\end{equation*}

Set $V=\bC\otimes_\bR V_0$ and $V^\vee=\bC\otimes_\bR V^\vee_0,$ and
similarly for $V_J,$ $V^\vee_J.$  Let
$S(V)$ denote the symmetric algebra of $V$. Let $\bC[W]$ denote the
group algebra of $W$.

If $\delta$ is an automorphism of $I$ such that
$(\al,\beta^\vee)=(\delta(\al),\delta(\beta)^\vee)$, for all
$\al,\beta\in I$, then $\delta$
induces an automorphism of $\Phi$; denote $W'=W\rtimes
\langle\delta\rangle,$ where $\langle\delta\rangle$ is the cyclic
group generated by $\delta.$

\subsection{Graded affine Hecke algebra}\label{sec:1.2} Let $k:R^+\to
\bC$, $k(\al)=k_\al$, be a $W$-invariant function.

\begin{definition}[\cite{L1}]\label{d:graded} The graded affine Hecke algebra
  $\bH=\bH(\Phi,k)$ attached to the root system $\Phi$ and parameter
  function $k$ is the unique associative complex algebra with identity
  generated by $w\in W$ and $S(V)$ such that:
\begin{enumerate}
\item[(i)] $\bH\cong \bC[W]\otimes_\bC S(V)$ as
  $(\bC[W],S(V))$-bimodules;
\item[(ii)] $(1\otimes \omega)(s_\al\otimes 1)-(s_\al\otimes
  1)(1\otimes s_\al(\omega))=k_\al (\omega,\al^\vee),$ for all $\al\in
  I$, $\omega\in V.$
\end{enumerate}
In the sequel, we write $f$ for $1\otimes f$, $f\in S(V)$,  and $w$
for $w\otimes 1$, $w\in W.$ 
\end{definition}
From Definition \ref{d:graded}(ii), it is easy to deduce that
\begin{equation}
f\cdot s_\al-s_\al\cdot s_\al(f)=k_\al\Delta_\al(f),\ \al\in I,\ f\in S(V),
\end{equation} 
where $\Delta_\al(f)=\frac {f-s_\al(f)}\al$ is the difference
operator. Moreover, by induction on $w\in W$, one can then verify that
\begin{equation}
f\cdot w =w\cdot w^{-1}(f)+\sum_{w'<w} w' f_{w'},\ w\in W,\ f\in S(V),
\end{equation}
for some $f_{w'}\in S(V),$ where $<$ denotes the Bruhat order in
$W$. This relation will be used implicitly in the proofs below.

The center of $\bH$ is $Z(\bH)=S(V)^W$ (\cite[Proposition 4.5]{L1}). Since $\bH$ is
finite over $Z(\bH),$ every simple $\bH$-module is finite dimensional,
and the center $Z(\bH)$ acts by scalars (central character) in every irreducible
module. The central characters are thus parameterized by $W$-orbits in
$V^\vee.$ 
Denote $$\Theta(\bH)=W\backslash V^\vee\text{ and }\cc:\Irr\bH\to\Theta(\bH),$$
the central character map, a finite-to-one map. We say that an
irreducible $\bH$-module $\pi$ has real central character if
$\cc(\pi)\in W\backslash V^\vee_0.$

Let $\delta$ be an automorphism
  of $I$ as in section \ref{sec:1.1} and suppose the parameter
  function $k$ satisfies $k_\al=k_{\delta(\al)}$ for all $\al\in R^+.$
  In this case, $\delta$ defines an automorphism of $\bH$, and we may 
  define the extended graded affine Hecke algebra
  $\bH'=\bH\rtimes\langle\delta\rangle$. The center of $\bH'$ is
  $Z(\bH')=S(V)^{W'}$, and the central characters of $\bH'$-modules
  are parameterized by $W'$-orbits in $V^\vee.$ Denote
  $\Theta(\bH')=W'\backslash V^\vee.$

%\begin{remark}\label{r:order-delta} If $\delta$ is nontrivial, we will be interested in the situations when the root system $\Phi$ is
%irreducible and $\delta$ is of order less than or equal
%to $3$. (The case when $\delta$ is of order $3$ appears only if the
%root system is of type $D_4$.) In these cases, $k$ is necessarily a
%constant function.
%\end{remark}

Let $R(\bH')$ denote the Grothendieck group of finite dimensional
$\bH'$-modules. For every ring $K\supset\bZ,$ set
$R(\bH')_K=K\otimes_\bZ R(\bH').$

\subsection{The elements $\wti\om$}\label{sec:1.3}
The algebra $\bH'$ has a natural conjugate-linear anti-involution $*$
defined on generators (\cite[section 5]{BM}) by
\begin{equation}
w^*=w^{-1},\ w\in W,\quad \delta^*=\delta^{-1},\quad  \om^*=-\overline{w_0\cdot w_0(\om)\cdot w_0},\ \om\in V,
\end{equation}
where $w_0$ is the long Weyl group element. This definition is
motivated by the relation with Iwahori-Hecke algebras and $p$-adic
groups (see \cite{BM}). A direct computation shows that
\begin{equation}
\om^*=-\overline\om+2\overline p_\om, \text{ where }p_\om=\frac 12\sum_{\al\in R^+} k_\al (\om,\al^\vee) s_\al\in\bC[W],\quad \om\in V_0.
\end{equation}
Set
\begin{equation}\label{e:omtilde}
\wti\om=\om-p_\om, \quad \om\in V.
\end{equation}
In particular, $\wti\om^*=-\overline{\wti\om}$. 
\begin{comment}
The
elements $\wti\om$ satisfy the following well-known properties,
e.g. \cite{BM2} or \cite[Proposition 2.10]{BCT}:
\begin{enumerate}
\item $w\cdot\wti\om\cdot w^{-1}=\wti{w(\om)}$, $w\in W,$ $\om\in V$;
\item $[\wti\om_1,\wti\om_2]=-[p_{\om_1},p_{\om_2}],$ $\om_1,\om_2\in V.$
\end{enumerate}
\end{comment}
Notice also that $\delta(\wti\om)=\wti{\delta(\om)}.$

\subsection{A filtration of $\bH'$}\label{sec:1.4} Define a notion of
degree in $\bH'$ as follows. From Definition \ref{d:graded}, one sees
that every $h\in \bH'$ can be uniquely written as $h=\sum_{w\in W'} w
a_w,$ where $a_w\in S(V).$ Define the degree of $h$ to be the maximum
of degrees in $S(V)$ of all $a_w$. Set $\C F^j\bH'$ to be the set of
elements of $\bH'$ of degree less than or equal to $j$. This defines a filtration
\begin{equation*}
\bC[W']=\C F^0 \bH'\subset \C F^1\bH'\subset \C F^2\bH'\subset\dots,
\end{equation*}
and let $\tilde\bH'$ be the associated graded object. It is apparent from
the commutation relation in Definition \ref{d:graded} that $\tilde\bH'$
may be naturally identified with the (extended) graded affine Hecke
algebra $\bH'_0$ with parameter function $k=0$.

\subsection{Parabolic subalgebras}\label{sec:1.5}
Let $J\subset I$ be given. The parabolic subalgebra $\bH_J$ of $\bH$
is the subalgebra generated by $W_J$ and $S(V).$ Denote
$\bH_{J}^\sem$ the subalgebra of $\bH_J$ spanned by $W_J$ and $S(V_J)$;
this is the graded affine Hecke algebra for the semisimple root system
$\Phi_J^\sem$ with parameter function $k_J=k|_{R_J^+}.$ 
It is clear that
\begin{equation}
V=V_J\oplus V^{W_J},\quad \bH_J=\bH_J^\sem\otimes_\bC S(V^{W_J}).
\end{equation}
For every $\bH_J$ module $X$, define the parabolically induced
$\bH$-module
\begin{equation}
\Ind_{\bH_J}^\bH(X)=\bH\otimes_{\bH_J} X.
\end{equation}
In particular, if $\sigma$ is an $\bH_J^\sem$-module, and $\chi_\nu:
S(V^{W_J})\to \bC$ is a character parameterized by $\nu\in
(V^\vee)^{W_J}$, one can form the induced $\bH$-module
\begin{equation}
X(J,\sigma,\nu)=\Ind_{\bH_J}^\bH(\sigma\otimes \chi_\nu).
\end{equation}

\section{The cocenter and Clifford theory}\label{sec:2}

\subsection{$\delta$-commutators} \label{sec:3.1}
We retain the notation from the previous section. In particular,
$\delta$ is an automorphism of the Dynkin diagram of order $d$ and
$\bH'=\bH\rtimes\langle\delta\rangle$ is the extended graded affine
Hecke algebra.

\begin{definition}\label{d:deltacommut} Let $h, h' \in \bH$. Then $[h, h']_\d=h h'-h' \d(h)$ is called the {\it $\d$-commutator} of $h$ and $h'$. Let $[\bH, \bH]_\d$ be the submodule of $H$ generated by all $\d$-commutators. Similarly, we may define $[h, h']_{\d^i}$ and $[\bH, \bH]_{\d^i}$ for all $i \in \mathbf N$. 
\end{definition}

It is easy to see that $\d$ sends $[h, h']_{\d^i}$ to $[\d(h),
\d(h')]_{\d^i}$ and thus sends $[\bH, \bH]_{\d^i}$ to itself. Hence
$(1-\d): \bH \to \bH$ induces a map 
\begin{equation}
(1-\d): \bH/[\bH, \bH]_{\d^i} \to \bH/[\bH, \bH]_{\d^i}.
\end{equation}
 We denote by $\bar \bH^{[i]}$ the $\d$-coinvariants of $\bH/[\bH, \bH]_{\d^i}$, i.e., the quotient of $\bH/[\bH, \bH]_{\d^i}$ by the image of $(1-\d)$. 

We prove the following result. 

\begin{proposition}\label{cocenter-decomp}
Set $\bar \bH'=\bH'/[\bH', \bH']$, where
$\bH'=\bH\rtimes\<\d\>$. Then 

\begin{enumerate}
\item $\bar \bH'=\oplus_{i=0}^{d-1} \bH \d^i/([\bH', \bH'] \cap \bH \d^i)$, where $d$ is the order of $\d$. 

\item The map $h \mapsto h \d^i$ induces an isomorphism of vector spaces from $\bar \bH^{[i]}$ to $\bH \d^i/([\bH', \bH'] \cap \bH \d^i)$.
\end{enumerate}
\end{proposition}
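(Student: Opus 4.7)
My plan is to exploit the natural $\bZ/d\bZ$-grading $\bH' = \bigoplus_{i=0}^{d-1} \bH\d^i$. Multiplication in $\bH'$ respects this grading because $(h\d^i)(h'\d^j) = h\,\d^i(h')\,\d^{i+j}$, and consequently for any two homogeneous elements the commutator $[h\d^j, h'\d^l] = h\,\d^j(h')\,\d^{j+l} - h'\,\d^l(h)\,\d^{j+l}$ is homogeneous of degree $j+l$ mod $d$. Since an arbitrary commutator $[x,y]$ with $x,y \in \bH'$ expands bilinearly into graded commutators, $[\bH',\bH']$ is a graded subspace:
\[
[\bH', \bH'] = \bigoplus_{i=0}^{d-1} \bigl([\bH', \bH'] \cap \bH\d^i\bigr).
\]
Quotienting $\bH'$ by this decomposition immediately yields (1).

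For part (2), I consider the vector space bijection $\phi_i : \bH \to \bH\d^i$, $h \mapsto h\d^i$. Composing with the projection $\bH\d^i \twoheadrightarrow \bH\d^i /([\bH',\bH'] \cap \bH\d^i)$ gives a linear surjection $\bH \twoheadrightarrow \bH\d^i /([\bH',\bH'] \cap \bH\d^i)$, and the task is to identify its kernel with $[\bH,\bH]_{\d^i} + (1-\d)\bH$, which is exactly the relation subspace defining $\bar\bH^{[i]}$. For the containment $\phi_i([\bH,\bH]_{\d^i} + (1-\d)\bH) \subseteq [\bH',\bH'] \cap \bH\d^i$, direct calculation in $\bH'$ gives the two identities
\[
[h,h']_{\d^i}\,\d^i = [\,h,\, h'\d^i\,], \qquad (1-\d)(h)\,\d^i = -[\,\d,\, h\d^{i-1}\,],
\]
so both generating families for the kernel map into commutators in $\bH'$.

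For the reverse containment, thanks to the graded decomposition from (1) it suffices to handle a single graded commutator $[h\d^j, h'\d^{i-j}]$ (indices mod $d$). Setting $b := \d^j(h')$, so that $h' = \d^{-j}(b)$, I rewrite
\[
[h\d^j, h'\d^{i-j}] = \bigl(hb - \d^{-j}(b\,\d^i(h))\bigr)\d^i = \bigl(hb - b\,\d^i(h)\bigr)\d^i + (1-\d^{-j})(b\,\d^i(h))\,\d^i,
\]
which displays the $\phi_i$-preimage as a $\d^i$-commutator plus an element of $(1-\d^{-j})\bH$. The argument closes by observing $(1-\d^{-j})\bH \subseteq (1-\d)\bH$: the factorization $1 - \d^{-j} = -\d^{-j}(1-\d)(1+\d+\cdots+\d^{j-1})$ and the $\d$-stability of $(1-\d)\bH$ (since $\d(1-\d) = (1-\d)\d$ on $\bH$) give the inclusion.

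The main obstacle I anticipate is the bookkeeping in this last step, namely correctly normalizing a general graded commutator $[h\d^j, h'\d^{i-j}]$ into a $\d^i$-commutator modulo $(1-\d)\bH$ and verifying that negative powers of $\d$ produce no new relations. Once this reduction is carried out, parts (1) and (2) fit together cleanly, and the formula $h \mapsto h\d^i$ descends to the desired isomorphism $\bar\bH^{[i]} \xrightarrow{\sim} \bH\d^i /([\bH',\bH'] \cap \bH\d^i)$.
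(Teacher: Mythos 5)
Your proposal is correct and follows essentially the same route as the paper: part (1) by observing that $[\bH',\bH']$ is graded for the $\bZ/d\bZ$-grading of $\bH'$, and part (2) by identifying the degree-$i$ piece, shifted by $\delta^{-i}$, with $[\bH,\bH]_{\delta^i}+(1-\delta)\bH$ via explicit commutator identities in both directions (your identities $[h,h']_{\delta^i}\delta^i=[h,h'\delta^i]$ and $(1-\delta)(h)\delta^i=-[\delta,h\delta^{i-1}]$, and the rewriting of a general graded commutator as a $\delta^i$-commutator plus an element of $(1-\delta^{-j})\bH\subseteq(1-\delta)\bH$, match the paper's claim (a)). The bookkeeping step you flag as a potential obstacle is exactly what you already carried out correctly, so no gap remains.
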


\begin{proof}
We have the decompositions $\bH'=\oplus_{i=0}^{d-1} \bH \d^i$ and 
\begin{equation}
[\bH', \bH']=\sum_{j, k} [\bH \d^j, \bH \d^k]=\bigoplus_{i=0}^{d-1}
(\sum_j [\bH \d^{i+j}, \bH \d^{-j}]).
\end{equation}
 Notice that $\sum_j [\bH \d^{i+j}, \bH \d^{-j}] \subset \bH
 \d^i$. Thus $[\bH', \bH'] \cap \bH \d^i=\sum_j [\bH \d^{i+j}, \bH
 \d^{-j}]$ and $\bar \bH'=\oplus_{i=0}^{d-1} \bH \d^i/([\bH', \bH']
 \cap \bH \d^i)$.  Part (1) is proved. 

Now we show that 

\begin{enumerate}
\item[(a)] For any $i$, $\sum_j [\bH \d^{i+j}, \bH \d^{-j}] \d^{-i}=(1-\d) \bH+[\bH, \bH]_{\d^i}$. 
\end{enumerate}

By definition, $[\bH \d^{i+j}, \bH \d^{-j}]=[\bH \d^{i+j}, \d^{-j} \bH]$ is generated by $$[h \d^{i+j}, \d^{-j} h']=h \d^i h'-\d^{-j} h h' \d^{i+j}=\bigl(h \d^i(h')-\d^{-j}(h' h) \bigr) \d^i.$$ 

Notice that $$h \d^i(h')-\d^{-j}(h' h)=-[h', h]_{\d^i}+(1-\d^{-j})(h'
h) \in (1-\d) \bH+[\bH, \bH]_{\d^i},$$
since $(1-\d^{-j})(h' h) \in (1-\d) \bH$. 
 Thus $[\bH \d^{i+j}, \bH \d^{-j}] \subset (1-\d) \bH+[\bH, \bH]_{\d^i}$.

On the other hand, 
$$h-\d(h)=[h \d^{i-1}, \d]\delta^{-i} \in \sum_j [\bH \d^{i+j}, \bH \d^{-j}] \d^{-i}.$$
%$$h-\d(h)=h \d^i(1)-\d(1 \times h)=[h \d^{i-1}, \d] \in \sum_j [\bH
%\d^{i+j}, \bH \d^{-j}] \d^{-i}.$$ 
Hence $(1-\d) \bH \subset \sum_j [\bH \d^{i+j}, \bH \d^{-j}] \d^{-i}$. 

Moreover, as before, we have that $$[h', h]_{\d^i} \in -[h \d^{i+j}, \d^{-j} h'] \d^{-i}+(1-\d) \bH \subset \sum_j [\bH \d^{i+j}, \bH \d^{-j}] \d^{-i}.$$ 

Thus (a) is proved. 
Then \begin{align*} \bH \d^i/([\bH', \bH'] \cap \bH \d^i) & \cong \bH/([\bH', \bH'] \d^{-i} \cap \bH)=\bH/\sum_j [\bH \d^{i+j}, \bH \d^{-j}] \d^{-i} \\ &=\bH/\bigl((1-\d) \bH+[\bH, \bH]_{\d^i}\bigr). \end{align*}

It is easy to see that $(1-\d) \bH+[\bH, \bH]_{\d^i}/[\bH, \bH]_{\d^i}$ is the image of $(1-\d): \bH/[\bH, \bH]_{\d^i} \to \bH/[\bH, \bH]_{\d^i}$. Thus $\bH/\bigl((1-\d) \bH+[\bH, \bH]_{\d^i}\bigr)$ is isomorphic to the quotient of $\bH/[\bH, \bH]_{\d^i}$ by the image of $(1-\d)$. 

Part (2) is proved. 
\end{proof}

\subsection{Clifford theory for $\bH'$}\label{sec:3.2} 
Let $\Gamma=\<\d\>$. If $(\pi,X)$ is a finite dimensional $\bH$-module, let $(^{\delta^i} \pi,{}^{\delta^i}\!X)$
denote the $\bH$-module with the action
$^{\delta^i}\!\pi(h)x=\pi(\delta^{-i}(h))x,$ for all $x\in X,$ $h\in
\bH.$ Suppose $X$ is irreducible. Define the inertia group
$\Gamma_X=\{\delta^i: X\cong{}^{\delta^i}\!X\}.$ 

Fix a family of isomorphisms $\phi_{\delta^i}: X\to {}^{\delta^{-i}}\!
X$, $\delta^i\in \Gamma_X$ (each one of these isomorphisms is unique
up to scalar). In general, this defines  factor set ($2$-cocycle)
$$\beta:  \Gamma_X\times \Gamma_X\to \bC^\times,\quad
\phi_{\delta^i}\phi_{\delta^j}=\beta(\delta^i,\delta^j) \phi_{\delta^{i+j}}.$$
However, in our particular case, $\Gamma_X$ is a cyclic subgroup,
generated by say $\delta^{i_X}$ and we can normalize the isomorphisms
$\phi_{\delta^i}$ such that $\phi_{\delta^{k
    i_X}}=\phi_{\delta^{i_X}}^k.$ This has the consequence that the
factor set $\beta$ can be chosen to be trivial. 

If $U$ is an irreducible $\Gamma_X$-module, there is an action of
$\bH\rtimes \Gamma_X$ on $X\otimes U$: 
\begin{equation}
(h\delta^i)(x\otimes u)=h \phi_{\delta^i}(x)\otimes \delta^i u.
\end{equation}
One can form the induced $\bH'$-module $X\rtimes U=\Ind_{\bH\rtimes
  \Gamma_X}^{\bH\rtimes\Gamma} (X\otimes U).$ The main results in
Clifford theory are summarized next.

\begin{theorem}[{\cite[Appendix A]{RR}}]\label{t:irrH'} \begin{enumerate}
\item If $X$ is an irreducible $\bH$-module and $U$ an irreducible
  $\Gamma_X$-module, the induced $\bH'$-module $X\rtimes U$ is
  irreducible.
\item Every irreducible $\bH'$-module is isomorphic to an $X\rtimes U.$
\item If $X\rtimes U\cong X'\rtimes U'$, then $X,X'$ are
  $\<\d\>$-conjugate $\bH$-modules, and $U\cong U'$ as $\Gamma_X$-modules.
\end{enumerate}
\end{theorem}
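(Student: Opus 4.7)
The plan is to carry out the standard Clifford-theoretic recipe for the finite cyclic extension $\bH \subset \bH' = \bH \rtimes \langle\delta\rangle$. The essential enabling step has already been accomplished in section \ref{sec:3.2}: because the inertia group $\Gamma_X$ is cyclic, the intertwiners $\phi_{\delta^i}: X \to {}^{\delta^{-i}}\!X$ can be normalized multiplicatively so that the associated 2-cocycle $\beta$ is trivial. This lets us treat $X$ as an honest $\bH \rtimes \Gamma_X$-module rather than a projective one, which is exactly what makes the textbook arguments go through unchanged.

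For (1) I would analyze the restriction of $X \rtimes U$ to $\bH$. The coset formula for induction gives $(X \rtimes U)|_{\bH} \cong \bigoplus_{\delta^i \in \Gamma/\Gamma_X} {}^{\delta^i}\!X \otimes U$ as $\bH$-modules. By the very definition of $\Gamma_X$, the $\bH$-modules $\{{}^{\delta^i}\!X\}$ are pairwise non-isomorphic as $\delta^i$ ranges over distinct cosets, so this is already the $\bH$-isotypic decomposition, with multiplicity spaces isomorphic to $U$. Any nonzero $\bH'$-submodule $Y$ must restrict to a sum of full isotypic components; if it meets the $X$-isotypic piece $X \otimes U$, then since $U$ is irreducible as a $\Gamma_X$-module it must contain $X \otimes U$ entirely, and the remaining components are produced by translating through coset representatives of $\Gamma/\Gamma_X$.

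For (2), given an irreducible $\bH'$-module $Y$, the restriction $Y|_{\bH}$ is finite dimensional and semisimple. Pick an irreducible $\bH$-constituent $X \subset Y|_{\bH}$ and let $Y_X$ be its isotypic component. Since $\delta$ permutes $\bH$-isotypic components via $X \mapsto {}^{\delta}\!X$, the $\Gamma$-stabilizer of $Y_X$ is exactly $\Gamma_X$, so $Y_X$ becomes a module for $\bH \rtimes \Gamma_X$. Writing $Y_X = X \otimes U$ with $U = \Hom_{\bH}(X, Y_X)$ — a genuine $\Gamma_X$-module thanks to the trivialized cocycle — Frobenius reciprocity yields a nonzero and hence surjective $\bH'$-map $\Ind_{\bH \rtimes \Gamma_X}^{\bH'}(X \otimes U) \to Y$. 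Combining this with (1) applied to any irreducible $\Gamma_X$-summand of $U$, the irreducibility of $Y$ forces $U$ itself to be $\Gamma_X$-irreducible and $X \rtimes U \cong Y$. Part (3) is then a direct comparison: from (1) the $\bH$-isotypic support of $X \rtimes U$ records the $\Gamma$-orbit of $X$, so an isomorphism $X \rtimes U \cong X' \rtimes U'$ forces $X' \cong {}^{\delta^i}\!X$ for some $i$; after replacing $(X', U')$ by its $\delta^i$-conjugate we may assume $X' = X$, and matching $X$-isotypic components as $\bH \rtimes \Gamma_X$-modules gives $U \cong U'$.

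The only genuinely delicate point is the cocycle bookkeeping in (2): a priori the multiplicity space $U$ carries only a projective $\Gamma_X$-action, and identifying $Y_X$ as a tensor product $X \otimes U$ of actual $\bH \rtimes \Gamma_X$-modules requires the triviality of $\beta$. This is where the cyclicity of $\Gamma_X \subset \langle\delta\rangle$ is indispensable; without it a nontrivial Schur multiplier contribution would appear and one would be forced to work with a central extension of $\Gamma_X$. Since section \ref{sec:3.2} has already carried out the normalization, the remainder is a direct transcription of standard cyclic-extension Clifford theory.
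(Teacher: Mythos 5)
Your argument is correct, but note that the paper does not actually prove this theorem: it simply cites \cite[Appendix A]{RR}, where Clifford theory is developed for crossed products $\bH\rtimes\Gamma$ in general (including a possibly nontrivial factor set), and the only input the paper itself supplies is the observation in section \ref{sec:3.2} that for cyclic $\Gamma_X$ the cocycle $\beta$ can be normalized away. Your proposal reproduces that standard argument in the cocycle-free cyclic case, and all the key steps are sound: the isotypic decomposition $(X\rtimes U)|_{\bH}\cong\bigoplus_{\gamma\in\Gamma/\Gamma_X}{}^{\gamma}\!X\otimes U$ with pairwise non-isomorphic constituents, the identification of multiplicity spaces as genuine $\Gamma_X$-modules, Frobenius reciprocity for (2), and the comparison of isotypic supports for (3). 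Two phrasings deserve a word more, though neither is a real gap. First, a nonzero $\bH'$-submodule $Y$ need not restrict to a sum of \emph{full} isotypic components a priori; the correct statement, which your next sentence in effect uses, is that $Y\cap(X\otimes U)$ is an $\bH\rtimes\Gamma_X$-submodule whose multiplicity space $\Hom_{\bH}(X,Y\cap(X\otimes U))$ is a $\Gamma_X$-submodule of $U$, hence $0$ or all of $U$, and translation by coset representatives shows it cannot be $0$ once $Y\neq 0$. Second, the semisimplicity of $Y|_{\bH}$ in (2) should be justified (take an irreducible $\bH$-submodule $X_0\subseteq Y$; then $\sum_i\delta^iX_0$ is a nonzero $\bH'$-submodule, hence equals $Y$, exhibiting $Y|_{\bH}$ as a sum of irreducibles), and the conclusion that $U$ itself is $\Gamma_X$-irreducible follows by comparing the $X$-isotypic component of $Y$ with that of the irreducible summand $X\rtimes U_j$ through which the surjection factors. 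With these routine completions your proof matches the cited source.
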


We need a formula for the trace of an $\bH'$-module. For every
$\delta'\in\Gamma$ and $(\pi,X)\in\Irr^{\delta'}\bH$, let
$\phi_{\delta'}\in \End_\bC(X)$ be the intertwiner
as before. Define the twisted trace 
$$\tr^{\delta'}(\pi):\bH\to\bC,\
\tr^{\delta'}(\pi)(h)=\tr(\pi(h)\circ\phi_{\delta'}).$$ 
Let also $\Tr(~,~): \bH'\times R(\bH')\to \bC$ be the trace pairing,
i.e., $\Tr(h,\pi)=\tr\pi(h),$ $h\in\bH'$, $\pi\in R(\bH').$

\begin{lemma}\label{l:traceH'} Let $X\rtimes U$ be an irreducible
  $\bH'$-module as in Theorem \ref{t:irrH'}. For $h\in \bH,$ $\delta'\in\Gamma,$
\begin{equation}
\Tr(h\delta',X\rtimes U)=\begin{cases}
  \delta'(U)\sum_{\gamma\in\Gamma/\Gamma_X}\tr^{\delta'}(X)(\gamma^{-1}(h)),
  &\text{ if }\delta'\in\Gamma_X,\\
0, &\text{ if }\delta'\notin\Gamma_X,
\end{cases}
\end{equation}
where $\delta'(U)$ is the root of unity by which $\delta'$ acts in $U$.
\end{lemma}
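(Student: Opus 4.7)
The plan is to apply the Frobenius formula for the character of an induced representation to $X \rtimes U = \Ind_{\bH \rtimes \Gamma_X}^{\bH'}(X\otimes U)$. Recall that for a subgroup-like inclusion $\bH\rtimes\Gamma_X \subset \bH'$, the character of the induced module evaluated at an element $g$ is the sum of $\tr_{X\otimes U}(\gamma^{-1} g \gamma)$ over those coset representatives $\gamma \in \Gamma/\Gamma_X$ for which $\gamma^{-1} g \gamma$ lies in $\bH\rtimes\Gamma_X$.

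First I would choose coset representatives for $\Gamma/\Gamma_X$ inside $\Gamma=\<\d\>$ (these are simply powers of $\d$). For $g=h\d'$ with $h\in\bH$ and $\d'\in\Gamma$, since $\Gamma$ is abelian one computes
\begin{equation*}
\gamma^{-1}(h\d')\gamma = \gamma^{-1}(h)\,\d',
\end{equation*}
where $\gamma^{-1}(h)$ denotes the action of $\gamma^{-1}\in\Gamma$ on $h$ by the diagram automorphism. This element lies in $\bH\rtimes\Gamma_X$ if and only if $\d'\in\Gamma_X$, independently of $\gamma$. Hence if $\d'\notin\Gamma_X$ every term in the Frobenius sum vanishes, giving the second case of the formula.

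Assume now $\d'\in\Gamma_X$. Because $\Gamma=\<\d\>$ is cyclic, $\Gamma_X$ is cyclic (hence abelian), so the irreducible $\Gamma_X$-module $U$ is one-dimensional, and $\d'$ acts on $U$ by a root of unity which I denote $\d'(U)$. Using the explicit formula $(h\d')(x\otimes u)=h\,\phi_{\d'}(x)\otimes \d' u$ for the action on $X\otimes U$, the trace factors as
\begin{equation*}
\tr_{X\otimes U}\bigl((\gamma^{-1}(h)\,\d')\bigr) = \tr_X\bigl(\pi(\gamma^{-1}(h))\circ\phi_{\d'}\bigr)\cdot \d'(U) = \d'(U)\cdot \tr^{\d'}(X)(\gamma^{-1}(h)).
\end{equation*}
Summing over $\gamma\in\Gamma/\Gamma_X$ yields the claimed formula.

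The only point that requires genuine care is the consistency of the intertwiner $\phi_{\d'}$ used to compute $\tr^{\d'}(X)$ with the one built into the action on $X\otimes U$; this is precisely why the preceding paragraph normalizes the $\phi_{\d^i}$ so that the cocycle $\beta$ is trivial, and so no scalar ambiguity appears. Once this is in place, the computation is a direct application of the induced-character formula together with the one-dimensionality of $U$.
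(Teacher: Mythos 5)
Your proposal is correct and follows essentially the same route as the paper: the paper's proof is precisely the block computation underlying your Frobenius-type formula, decomposing $X\rtimes U=\bigoplus_{\gamma\in\Gamma/\Gamma_X}\gamma\otimes(X\otimes U)$, observing that $h\delta'$ maps each block into a single block (diagonal exactly when $\delta'\in\Gamma_X$, since $\Gamma$ is abelian), and then factoring the trace as $\delta'(U)\,\tr^{\delta'}(X)(\gamma^{-1}(h))$. The only difference is presentational: you cite the induced-character formula while the paper verifies it directly in this crossed-product setting.
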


\begin{proof}
As a vector space, $X\rtimes
U=\Ind_{\bH\rtimes\Gamma_X}^{\bH\rtimes\Gamma}(X\otimes
U)=\sum_{\gamma\in\Gamma/\Gamma_X}\gamma\otimes (X\otimes U).$
Then the action of $h\delta'$ is
\begin{align*}
(h\delta')\cdot \gamma\otimes (x\otimes u)&=h\delta'\gamma\otimes
(x\otimes u)=h\gamma'\delta''\otimes(x\otimes u),\text{ for some
}\gamma'\in \Gamma/\Gamma_X,\ \delta''\in \Gamma_X\\
&=\gamma'\cdot (\gamma')^{-1}(h)\delta''\otimes (x\otimes u)=\gamma'\otimes
((\gamma')^{-1}(h)\phi_{\delta''}(x)\otimes \delta''\cdot u).
\end{align*}
For this to have a nonzero contribution to the trace,
$\gamma'=\gamma,$ which is equivalent, since $\Gamma$ is abelian, with
$\delta'\in\Gamma_X.$ Suppose now this is the case (so
$\gamma'=\gamma$ and $\delta''=\delta'$). Then
$\delta'\cdot u=\delta'(U)u$ and the claim follows from the definition
of the twisted trace.

\end{proof}
   
{Set $R(\bH')_{\mathbb C}=R(\bH') \otimes_{\mathbb Z} \mathbb C$. We give a decomposition of  $R(\bH')_{\mathbb C}$} which is dual to the
decomposition $\bar\bH'=\oplus_{i=0}^{d-1}\bar\bH^{[i]}$ from
Proposition \ref{cocenter-decomp}, in a sense to be made
precise in the next section.

{Let $\co$ be a $\Gamma$-orbit on $\Irr(\bH)$. Set $\Gamma_\co=\Gamma_X$ for any $X \in \co$. This is well-defined since $\Gamma$ is cyclic. Then for any irreducible $\Gamma_\co$-module $U$ and $X \in \co$, $X \rtimes U=\oplus_{Y \in \co} Y \otimes U$ is independent of the choice of $X$. We denote it by $\co \rtimes U$. By Theorem \ref{t:irrH'}, $\Irr \bH'=\{\co \rtimes U\}$, where $\co$ runs over $\Gamma$-orbits on $\Irr(\bH)$ and $U$ runs over isomorphism classes of irreducible representations of $\Gamma_\co$. 

Suppose that $\d^i \in \Gamma_\co$. Let $U_{\co, i}$ be the virtual representation of $\Gamma_\co$ whose character is the characteristic function on $\d^i$. Then $\{\co \rtimes U_{\co, i}\}$ is a basis of $R(\bH')_\bC$. 

Let $R^{[i]}(\bH')_\bC$ be the subspace of $R(\bH')_\bC$ spanned by $\co \rtimes U_{\co, i}$, where $\co$ runs over $\Gamma$-orbits on $\Irr(\bH)$ with $\d^i \in \Gamma_\co$. Then 
\begin{equation}\label{groth-decomp}
R(\bH')_\bC=\oplus_{i=0}^{d-1} R^{[i]}(\bH')_\bC.
\end{equation}
By definition, $R^{[i]}(\bH')_\bC$ is a vector space with basis $(\Irr^{\d^i} \bH)_\Gamma$. Hence, the map $X \in \Irr^{\d^i}(\bH) \mapsto X \rtimes U_{\Gamma_X, i}$ induces an isomorphism $R^{\delta^i}(\bH)_{\Gamma, \mathbb C} \to R^{[i]}(\bH')_\bC$.  Here $R^{\delta^i}(\bH)_{\Gamma, \mathbb C}$ is the $\Gamma$-coinvariants of $R^{\delta^i}(\bH)_{\mathbb C}$.

By Lemma \ref{l:traceH'}, for $0 \le i, j<d$ with $i \neq j$, $\Tr(\bH \d^i, \co \rtimes U_{\co, j})=0$. }

%For every $0\le i<d$, let $\Irr^{\delta^i}\bH$ denote the set of irreducible $\bH$-modules $X$ fixed by $\delta^i$, i.e., $\delta^i\in \Gamma_X$. Let $R^{\delta^i}(\bH)_{\mathbb C}$ denote the $\bC$-span of $\Irr^{\delta^i}\bH$.

%Fix $\zeta$ a primitive $d$-th root of unity. For every $i$, we define an embedding 
%\begin{equation}
%R^{\delta^i}(\bH)\hookrightarrow R(\bH'),\quad X\in
%\Irr^{\delta^i}(\bH)\mapsto X\rtimes U_i,
%\end{equation}
%where $U_i$ is the $\Gamma_X=\langle\delta^{i_X}\rangle$-module on which $\delta^{i_X}$ acts by $(\zeta^{i_X })^i.$ Denote by $R^{[i]}(\bH)$ the image of $R^{\delta^i}(\bH)$ under this map. Then we have an abelian group decomposition
%\begin{equation}\label{groth-decomp}
%R(\bH')=\oplus_{i=0}^{d-1} R^{[i]}(\bH).
%\end{equation}

\section{(Twisted) Trace Paley-Wiener theorem}\label{sec:3}
In this section, we prove that trace Paley-Wiener theorem in the
setting of graded affine Hecke algebra. The proof follows the general
outline for the similar theorems for $p$-adic groups, \cite{BDK} and
\cite{Fl}, but for certain steps, e.g., Lemma \ref{l:finite-elliptic},
we give different arguments. 

\subsection{Trace forms}\label{sec:4.1} Define the trace linear map
\begin{equation}
\tr: \bH'\to R(\bH')^*,\quad h\mapsto (f_h: R(\bH')\to \bC,\
f_h(\pi)=\tr\pi(h)).
\end{equation}
It clearly descends to a linear map
\begin{equation}
\tr: \bar\bH'\to R(\bH')^*.
\end{equation}
This is compatible to the decompositions from Proposition
\ref{cocenter-decomp} and (\ref{groth-decomp}) as follows.
To simplify notation, we write the details in the case of $\delta$,
the same results hold for every $\delta^i.$ 
 Let
$R^*_{\delta}(\bH)=\Hom_\bC(R^{\delta}(\bH)_\bC,\bC)$ be the space of
$\bC$-valued linear forms on the vector space spanned by
$\Irr^{\delta}(\bH).$ 
The twisted trace map 
\begin{equation}
\tr_{\delta}: \bH\to R^*_{\delta}(\bH), \quad h\mapsto (f^{\delta}_h:
R^\delta(\bH)_\bC\to\bC, \ f^\delta_h(\pi)=\tr^\delta(\pi)).
\end{equation}
descends to a linear map
\begin{equation}
\tr^\delta:\bar\bH_\delta=\bH/[\bH,\bH]_\delta\to R^*_{\delta}(\bH).
\end{equation}
Call a form $f\in R^*_\delta(\bH)$ a trace form if $f=f^\delta_h$
for some $h\in\bH$ (or better $h\in \bar\bH_\delta$) and denote the
subspace of trace forms by $R^*_\delta(\bH)_{\tr}.$ This of course is
the image of $\tr^\delta.$

For every $J\subset I$ such that $\delta(J)=J,$ denote by
$(V^\vee)^{W_J\rtimes\delta}$ the fixed points of $\delta$ on
$(V^\vee)^{W_J}.$
\begin{definition}\label{d:goodforms}
A form $f\in R^*_\delta(\bH)$ is called good if for every $J\subset I$
such that $\delta(J)=J$, and every $\sigma\in\Irr^\delta(\bH_J^\sem)$,
the function $\nu\mapsto f(X(J,\sigma,\nu))$ is a regular function on
the variety $(V^\vee)^{W_J\rtimes\delta}$. Denote the subspace of good
forms by $R^*_\delta(\bH)_\good.$
\end{definition}
It is clear that $R^*_\delta(\bH)_{\tr}\subset R^*_\delta(\bH)_\good.$
The content of the trace Paley-Wiener theorem is that in fact the two spaces are equal:
\begin{theorem}\label{t:PW}
$R^*_\delta(\bH)_{\tr}= R^*_\delta(\bH)_\good.$
\end{theorem}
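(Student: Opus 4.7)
The easy inclusion $R^*_\delta(\bH)_{\tr} \subseteq R^*_\delta(\bH)_\good$ has already been noted in the text: for any $h \in \bH$, the module $X(J, \sigma, \nu)$ is realized on the fixed vector space $\bC[W/W_J] \otimes X_\sigma$, and the action of $h$ together with the twisting operator $\phi_\delta$ are polynomial in $\nu \in (V^\vee)^{W_J \rtimes \delta}$, hence so is the twisted trace. The entire content of the theorem is thus the reverse inclusion.

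My plan is to prove surjectivity by induction on the poset of $\delta$-stable standard parabolic subsets $J \subseteq I$ modulo $W$-conjugation, ordered by inclusion. Following the Bernstein-Deligne-Kazhdan framework adapted to the graded Hecke setting, I would define a filtration of $R^*_\delta(\bH)_\good$ by subspaces $R^*_\delta(\bH)^{\leq J}_\good$ consisting of good forms that vanish on every standard module $X(J', \sigma', \nu')$ with $J'$ not $W$-subconjugate to $J$, and show by induction on $|J|$ that each piece lies in the image of $\tr^\delta$.

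The base case is $J = \emptyset$: good forms supported on the $\delta$-elliptic part of $\Irr^\delta \bH$. Here I would combine the explicit basis of the elliptic quotient $\overline R^\delta_0(\bH)$ to be constructed in section \ref{sec:9} (whose cardinality equals the number of $\delta$-elliptic conjugacy classes of $W$ by Theorem \ref{t:3}) with the elliptic portion of the basis $\{w_C f_{J_C,i}\}$ from Theorem \ref{t:1} (those indexed by elliptic classes $C$). The pairing of these two finite sets under $\tr^\delta$ must be shown to be nondegenerate, using a finiteness statement on the central characters of elliptic modules. For the inductive step, given $f \in R^*_\delta(\bH)^{\leq J}_\good$, one restricts $f$ to an induced family $\nu \mapsto f(X(J,\sigma,\nu))$, obtaining a polynomial in $\nu$; Frobenius reciprocity and the compatibility of $\tr^\delta$ with parabolic induction then transport this datum to a good form on the smaller algebra $\bH_J^\sem$, one applies the inductive hypothesis there, and pushes forward along $\Ind_{\bH_J}^{\bH}$ to obtain $h \in \bar\bH_\delta$ with $f - \tr^\delta(h) \in R^*_\delta(\bH)^{<J}_\good$.

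The main obstacle will be the filtration itself: the irreducible constituents of $X(J,\sigma,\nu)$ vary with $\nu$, so ``support at level $J$'' must be defined via standard modules and a Langlands-type classification that is uniform in the complex parameter $k$. Equally delicate is formulating induction-trace compatibility as an identity inside $\bar\bH_\delta$: one must check that inducing an element of $\bar\bH_{J,\delta}$ represented by $w_{C'} f_{J_{C'},i}$ with $J_{C'} \subseteq J$ lands in the span of $\{w_C f_{J_C,i}\}$ with $J_C$ $W$-subconjugate to $J$. This is where the minimal-length results of section \ref{sec:4} and the $\delta$-twisted centralizer structure $N_{W,\delta}(W_J)$, already used to build the basis in Theorem \ref{t:1}, would enter decisively.
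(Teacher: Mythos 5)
Your skeleton is the right one---the paper's proof is likewise a BDK/Flicker-style argument resting on parabolic induction/restriction and finiteness of the elliptic part---but the two steps that carry the actual weight are missing or would fail as stated. First, your base case only matches $f$ against (a basis of) the elliptic quotient $\overline R^\delta_0(\bH)$, i.e.\ modulo induced modules. The paper instead uses Lemma \ref{l:finite-elliptic} (finiteness of $\Irr^\delta(\bH)_\el=\cc^{-1}(\Theta^\delta(\bH)_0)$, proved from the spanning set of the cocenter alone, with no need of section \ref{sec:9} or Theorem \ref{t:3}) to subtract a trace form agreeing with $f$ on \emph{every} irreducible whose central character is elliptic, so that $f(R^\delta(\bH)_\el)=0$. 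This stronger vanishing is essential: the descent is driven by the operator $\C A=\frac{1}{a}A^*=1+\sum_{J=\delta(J)\subsetneq I}c_J T_J^*$, and the conclusion $\C A(f)=0$ (whence $f=-\sum_J c_J T_J^*(f)$) uses that $T_J$ preserves central characters and therefore preserves $R^\delta(\bH)_\el$, together with $A(R^\delta_{\Ind}(\bH))=0$ and $R^\delta(\bH)=R^\delta(\bH)_\el+R^\delta_{\Ind}(\bH)$. If $f$ is only known to vanish on elliptic classes modulo induced modules, the terms $f(T_J\pi)$ for $\pi\in R^\delta(\bH)_\el$ need not vanish and the identity fails.

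Second, the descent mechanism is exactly the point you leave open. Your support filtration $R^*_\delta(\bH)^{\le J}_\good$ is not even consistent as written (with your definition the $J=\emptyset$ piece consists of forms vanishing on all standard modules with $J'\neq\emptyset$, which is not the space of elliptic forms), and making ``support at level $J$'' uniform in $\nu$ is precisely the difficulty you flag without resolving. The paper avoids it entirely by never filtering forms by support: the identity $f=-\sum c_J T_J^*(f)$ replaces the filtration, and induction on $|J|$ then applies because $i_J^*(f)$ is a good form on $\bH_J$ (hence a trace form by the inductive hypothesis for $\bH_J^\sem$) and $r_J^*$ of a trace form is trivially a trace form, $r_J$ being plain restriction so that its cocenter-level adjoint is the inclusion $\bH_J\hookrightarrow\bH$. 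Relatedly, your anticipated ``decisive'' difficulty about where induced cocenter elements $w_{C'}f_{J_{C'},i}$ land is a red herring: the cocenter-level adjoint of parabolic induction $i_J$ is the map $\wti r_J$ of Lemma \ref{l:adjointtrace}, not an inclusion, and the proof requires no such computation---only the two easy inclusions $i_J^*(R^*_\delta(\bH)_\good)\subset R^*_\delta(\bH_J)_\good$ and $r_J^*(R^*_\delta(\bH_J)_{\tr})\subset R^*_\delta(\bH)_{\tr}$.
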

The proof is presented in the next subsections.

\subsection{A filtration of $R^\delta(\bH)$}\label{sec:4.2} 
For every $J\subset I$, recall the functor of parabolic induction
$i_J: R(\bH_J)\to R(\bH),$ $i_J(X)=\Ind_{\bH_J}^\bH(X)$ from section
\ref{sec:1.5}. For every $0\le \ell\le |I|,$ define the abelian subgroup
\begin{equation}
R^\delta_{\ell}(\bH)=\sum_{J=\delta(J)\subset I;~|J|\le |I|-\ell} i_J(R^\delta(\bH_J)).
\end{equation}
Then $R^\delta_0(\bH)=R^\delta(\bH)$, $R^\delta_1(\bH)=R^\delta_{\Ind}(\bH)=\sum_{J=\delta(J)\subset I}
i_J(R^\delta(\bH_J)),$ the subgroup of twisted parabolically induced
modules, and $R^\delta_{|I|}=0$. These subgroups form a decreasing filtration
$$
R^\delta(\bH)=R^\delta_0(\bH)\supset R^\delta_1(\bH)\supset R^\delta_2(\bH)\supset\dots.
$$
Set 
\begin{equation}\label{e:graded-Groth}
\overline
R^\delta_\ell(\bH)=R^\delta_\ell(\bH)/R^\delta_{\ell+1}(\bH),\quad 0\le\ell<|I|,
\end{equation}
and $\overline R^\delta(\bH)=\oplus_{0\le\ell<|I|} \overline
R^\delta_\ell(\bH),$ the associated graded group.

Of particular interest is $\overline
R^\delta_0(\bH)=R^\delta(\bH)/R^\delta_{\Ind}(\bH),$ the space of
virtual $\delta$-elliptic modules. A module $\pi\in \Irr^\delta\bH$ is
called elliptic if the image of $\pi$ in $\overline R^\delta_0(\bH)$
is nonzero. Let $\Theta^\delta(\bH)_0$ denote the set of elliptic
central characters, i.e., the subset of
$\Theta(\bH)$ of all central characters of elliptic $\pi\in\Irr^\delta\bH$.

\subsection{Langlands classification}\label{sec:4.3} The parabolic
induction part of the Langlands classification for $\bH$ is proved in
\cite{Ev},  see also \cite[Theorem 2.4]{KR}.

Let $(\pi,X)$ be a finite dimensional $\bH$-module. For every $\lambda\in
V^\vee$, set $$X_\lambda=\{x\in X: (\pi(a)-\lambda(a))^nx=0,\text{ for
  some }n\in\mathbb N, \text{ and all }a\in S(V)\},$$ the generalized
$\lambda$-weight space of $S(V).$  The set of $S(V)$-weights of $X$ is
$\Psi(X)=\{\lambda\in V^\vee: X_\lambda\neq 0\}.$ It is easy to see
that $\Psi(X)\subset W\cdot\nu$, where $\nu$ is (a representative of) the central character
of $X$.

For every $\nu\in V^\vee,$ write $\nu=\Re\nu+\sqrt{-1}\Im\nu$, where
$\Re\nu,\Im\nu\in V^\vee_0.$

\begin{definition}
An irreducible $\bH$-module $X$ is called tempered (resp., discrete
series) if for every $\lambda\in\Psi(X)$, $\langle
\om,\Re\lambda\rangle\le 0$ (resp. $\langle\om,\Re\lambda\rangle<0$) for all $\om\in V_0$ such that
$\langle\om,\al^\vee\rangle>0$ for all $\al\in I.$ 
\end{definition}

\begin{theorem}[{\cite[Theorem 2.1]{Ev}}]\label{t:Langlands}\ 
\begin{enumerate}
\item Every standard induced module $X(J,\sigma,\nu)$, where $\sigma$
  is a tempered $\bH_J^\sem$-module and 
  $\langle\al,\Re\nu\rangle>0$ for all $\al\in I\setminus J$, has a
  unique irreducible quotient $L(J,\sigma,\nu).$
\item Every irreducible $\bH$-module is isomorphic to a Langlands
  quotient $L(J,\sigma,\nu)$ as in (1).
\item If $L(J,\sigma,\nu)\cong L(J',\sigma',\nu'),$ then $J=J'$,
  $\sigma\cong \sigma'$, and $\nu=\nu'.$
\end{enumerate}
\end{theorem}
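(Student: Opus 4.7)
The plan is to adapt the classical Langlands classification argument to the graded affine Hecke algebra setting, using the generalized weight space decomposition of a module under the polynomial subalgebra $S(V)$ as the natural replacement for the $p$-adic Jacquet functor, and to handle the three parts in the order existence (2), uniqueness of the irreducible quotient (1), and disjointness (3). For existence I would start with an arbitrary irreducible $X$ and select a weight $\lambda\in\Psi(X)$ whose real part is \emph{Langlands maximal}, meaning that $\Re\lambda$ maximizes, among all weights of $X$, the ``dominant component'' under the orthogonal decomposition determined by its own set of vanishing simple roots. Setting $J=\{\alpha\in I:\langle\alpha,\Re\lambda\rangle=0\}$ and decomposing $\lambda=\lambda_J+\nu$ along $V^\vee=V^\vee_J\oplus(V^\vee)^{W_J}$, the extremality forces $\langle\alpha,\Re\nu\rangle>0$ for $\alpha\in I\setminus J$. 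Next I would show that the $\bH_J$-span of $X_\lambda$ has $\chi_\nu$ as its generalized eigencharacter on the central factor $S(V^{W_J})$, and that its induced $\bH_J^\sem$-module structure contains a tempered irreducible subquotient $\sigma$, temperedness being a consequence of the extremality of $\Re\lambda$. Frobenius reciprocity $\Hom_\bH(X(J,\sigma,\nu),X)=\Hom_{\bH_J}(\sigma\otimes\chi_\nu,X|_{\bH_J})$ then delivers a nonzero, hence surjective, map $X(J,\sigma,\nu)\twoheadrightarrow X$.

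For uniqueness of the irreducible quotient my plan is to isolate a canonical maximal proper submodule of $X(J,\sigma,\nu)$ using extremality of weights. The generalized $S(V)$-weights of $X(J,\sigma,\nu)$ are of the form $w(\mu+\nu)$, where $w$ ranges over minimal-length representatives of $W_J\backslash W$ and $\mu$ over weights of $\sigma$. The strict positivity $\langle\alpha,\Re\nu\rangle>0$ on $I\setminus J$, combined with the antidominance of $\Re\mu$ for the tempered $\sigma$, implies via the standard Langlands lemma (controlling $\Re(w\nu)-\Re\nu$ in the cone spanned by $-(I\setminus J)$) that the weights at $w=1$ are strictly maximal. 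Any proper submodule must therefore be supported off this ``top stratum,'' so the sum of all such submodules is itself proper and its quotient is the unique irreducible quotient $L(J,\sigma,\nu)$. Disjointness then follows because the data can be read back off $L(J,\sigma,\nu)$: $\nu$ is the common projection of the top-stratum weights to $(V^\vee)^{W_J}$, $J$ is the stabilizer of $\Re\nu$ in $I$, and $\sigma$ is the $\bH_J^\sem$-submodule generated by the corresponding weight spaces.

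The main obstacle will be making the extremality/positivity estimate in the uniqueness step rigorous. Specifically, one needs the Langlands lemma asserting that for each $w\neq 1$ in the chosen set of coset representatives, the vector $\Re(w\nu)-\Re\nu$ lies in the open cone $\sum_{\alpha\in I\setminus J}\bR_{<0}\,\alpha$, and this must then be combined with the tempered bound on $\Re\mu$ to conclude that no weight $w(\mu+\nu)$ with $w\neq 1$ can agree with a weight at $w=1$. This combinatorial estimate is elementary but delicate, and it is what forces the hypothesis $\langle\alpha,\Re\nu\rangle>0$ on $I\setminus J$ rather than merely $\geq 0$. Once this lemma is in hand, the remaining steps (the $\bH_J$-module structure on $X_\lambda$ in existence, the temperedness of the constructed $\sigma$, and the surjectivity of the adjoint map) are routine adaptations of the arguments of Borel--Wallach for real reductive groups and of Evens \cite{Ev} for graded Hecke algebras.
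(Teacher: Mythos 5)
The paper itself does not prove this theorem: it is imported from Evens \cite[Theorem 2.1]{Ev} (see also \cite[Theorem 2.4]{KR}), so the only comparison available is with the classical Langlands-type argument that you are reconstructing, and in spirit your route is the same as the cited sources (an extremal weight plus Frobenius reciprocity for existence, a weight estimate isolating a canonical maximal proper submodule for uniqueness, and reading the data back off $L(J,\sigma,\nu)$ for disjointness).

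However, your existence step is wrong as stated. Defining $J=\{\al\in I:\langle\al,\Re\lambda\rangle=0\}$ directly from $\Re\lambda$ does not work: $J$ and $\nu$ must be read off the Langlands (dominant-projection) decomposition $\Re\lambda=\xi^0+\sum_{\al\in F}c_\al\,\al^\vee$ with $c_\al\le 0$, $\xi^0$ dominant and $\langle\al,\xi^0\rangle=0$ for $\al\in F$, taking $J=\{\al\in I:\langle\al,\xi^0\rangle=0\}$ and $\Re\nu=\xi^0$; then $\langle\al,\Re\nu\rangle>0$ for $\al\notin J$ is automatic from the definition of $J$, and the extremality of $\lambda$ is what makes the resulting $\sigma$ tempered, not what forces the positivity. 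With your recipe the argument already fails for tempered modules: for the Steinberg module of $\bH$ of type $A_2$ with $k>0$ the unique $S(V)$-weight is $\lambda=-k\rho^\vee$, so your $J$ is empty and $\Re\nu=-k\rho^\vee$ pairs negatively with every simple root, whereas the correct data are $J=I$, $\sigma=\St$, $\nu=0$. Two further points: the ``Langlands lemma'' you invoke should be phrased with coroots and without the claim that only $\al\in I\setminus J$ occur with strictly negative coefficients --- what is true is that $\Re\nu-w(\Re\nu)$ is a nonzero nonnegative combination of simple coroots for $w\in W^{J}\setminus\{1\}$, and the separation of the top stratum comes from pairing against the fundamental weights $\om_\al$, $\al\in I\setminus J$, combined with the tempered bound on the weights of $\sigma$, exactly as carried out in \cite{Ev} and \cite{KR}; and your uniqueness argument implicitly needs $\sigma$ irreducible (otherwise $X(J,\sigma',\nu)$ for a direct summand $\sigma'\subsetneq\sigma$ is a proper submodule meeting the top stratum), a hypothesis present in \cite{Ev} even though the paper's wording omits the word.
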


Moreover, it is implicitly proved in \cite{Ev} that if
$L(J',\sigma',\nu')$ is an irreducible constituent of the standard
module $X(J,\sigma,\nu)$ different than $L(J,\sigma,\nu)$, then
$\Re\nu>\Re\nu'$, where $>$ is the partial order relation on
$V_0^\vee$ defined by
\begin{equation}
a>b \text{ if } \langle \al,a\rangle>\langle \al,b\rangle,\text{ for
  all }\al\in I.
\end{equation}

It is immediate that $^\delta\!X(J,\sigma,\nu)\cong
X(\delta(J),{}^\delta\!\sigma,\delta(\nu)).$ Suppose that
$\pi\in\Irr^\delta\bH$ is given such that $\pi\cong
L(J,\sigma,\nu)$. Then $^\delta\!\pi\cong
L(\delta(J),{}^\delta\!\sigma,\delta(\nu))$ and Theorem
\ref{t:Langlands}(3) implies that necessarily $\delta(J)=J$,
$\sigma\in\Irr^\delta\bH^\sem_J$, and $\delta(\nu)=\delta$. In
particular, all irreducible constituents of $X(J,\sigma,\nu)$ are in $\Irr^\delta\bH.$

\begin{lemma}\label{l:temp-repr}
Suppose $\pi$ is an irreducible $\delta$-elliptic $\bH$-module. Then
there exists an irreducible $\delta$-elliptic tempered $\bH$-module
$\pi'$ such that $\cc(\pi)=\cc(\pi').$
\end{lemma}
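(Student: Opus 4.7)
The plan is to use the Langlands classification (Theorem \ref{t:Langlands}) together with the observation, explicitly noted before the statement, that a $\delta$-stable irreducible module has Langlands parameters $(J,\sigma,\nu)$ with $\delta(J)=J$, $\sigma\in\Irr^\delta\bH_J^\sem$ and $\delta(\nu)=\nu$; in particular, every irreducible constituent of the standard module $X(J,\sigma,\nu)$ lies in $\Irr^\delta\bH$. The idea is to write $\pi=L(J,\sigma,\nu)$ and, if $\pi$ is not already tempered, to trade $\pi$ for a constituent with strictly smaller $\Re\nu$ that is still $\delta$-elliptic and has the same central character. Iterating this procedure must terminate, and the terminal module will be tempered.

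More precisely, I would proceed as follows. Write $\pi=L(J,\sigma,\nu)$. If $J=I$, then $\nu\in(V^\vee)^{W_I}=0$ (since $\Phi$ is semisimple) and $\sigma$ is tempered, so $\pi$ itself is tempered and we take $\pi'=\pi$. Otherwise $J\subsetneq I$, and $X(J,\sigma,\nu)=i_J(\sigma\otimes\chi_\nu)$ is a proper parabolic induction from a $\delta$-stable parabolic, so by definition it lies in $R^\delta_1(\bH)=R^\delta_{\Ind}(\bH)$. Consequently $[X(J,\sigma,\nu)]=0$ in $\overline R^\delta_0(\bH)$. Using the Jordan--H\"older decomposition
\begin{equation*}
[X(J,\sigma,\nu)]=[L(J,\sigma,\nu)]+\sum_{i} m_i\,[L(J_i,\sigma_i,\nu_i)]
\end{equation*}
in $R(\bH)$, where (by the remark after Theorem \ref{t:Langlands}) each $\nu_i$ satisfies $\Re\nu_i<\Re\nu$ in the partial order $>$ on $V_0^\vee$, and projecting to $\overline R^\delta_0(\bH)$, we get
\begin{equation*}
[\pi]=-\sum_i m_i\,[L(J_i,\sigma_i,\nu_i)]\quad\text{in }\overline R^\delta_0(\bH).
\end{equation*}
Since $[\pi]\neq 0$ by the ellipticity hypothesis, at least one $L(J_i,\sigma_i,\nu_i)$ is nonzero in $\overline R^\delta_0(\bH)$, hence is itself $\delta$-elliptic. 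All constituents of $X(J,\sigma,\nu)$ share the central character of $\pi$, so this Langlands quotient has the same central character as $\pi$ as well.

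Finally, I would iterate: replace $\pi$ by the elliptic constituent just produced, apply the same argument again if it is not tempered, and continue. The key finiteness input is that the fiber $\cc^{-1}(\cc(\pi))$ is finite (central character is a finite-to-one map on $\Irr\bH$), so the parameter values $\Re\nu$ encountered lie in a finite set; the strict decrease in the partial order $>$ forces the procedure to terminate, necessarily at a module with $J=I$, i.e., a tempered one. The only nontrivial step is the reduction producing a strictly smaller elliptic constituent, and that is handled cleanly by the vanishing of properly induced classes in $\overline R^\delta_0(\bH)$; the termination is then automatic from the finiteness of the central character fiber.
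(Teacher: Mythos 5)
Your proposal is correct and takes essentially the same route as the paper's proof: both rest on the Langlands classification, the remark that constituents of $X(J,\sigma,\nu)$ other than $L(J,\sigma,\nu)$ are $\delta$-stable with strictly smaller $\Re\nu$, and the vanishing of properly induced standard modules in $\overline R^\delta_0(\bH)$. The paper packages the descent as an induction on the $\nu$-parameter, writing $\pi$ as a $\bZ$-combination of standard modules and extracting one with $J=I$, whereas you iterate by passing to a $\delta$-elliptic constituent at each step and terminate via finiteness of the central character fiber; this is only an organizational difference.
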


\begin{proof}
This is the Hecke algebra analogue of \cite[Lemma 1.2]{Fl}. By Theorem
\ref{t:Langlands} and the remarks following it, $\pi\cong
L(J,\sigma,\nu)$ where $\delta(J)=J,$ $\sigma\in\Irr^\delta\bH^\sem_J$
and $\nu=\delta(\nu).$ In $R^\sigma(\bH)$, we have
$\pi=X(J,\sigma,\nu)-\sum_k\pi_k,$ where
$\pi_k=L(J_k,\sigma_k,\nu_k)\in\Irr^\delta\bH$ and $\nu_k<\nu.$ Thus,
by induction on the length of the $\nu$-parameter, it follows that
$\pi$ is a linear $\bZ$-combination of standard modules $$\pi=\sum_i a_i X(J_i',\sigma_i',\nu_i'),$$
where for every $i$, $\delta(J_i')=J_i'$,
$^\delta\!\sigma_i'\cong\sigma_i'$, $\delta(\nu_i')=\nu_i'$, and $\cc
X(J_i',\sigma_i',\nu_i')=\cc(\pi).$ If for every $i$, $J_i'\neq I$,
then $\pi\equiv 0$ mod $R^\sigma_{\Ind}(\bH)$, which is a
contradiction. Thus there exists $i$'s such that $J_i'=I,$ and the
corresponding $X(J_i',\sigma_i',\nu_i')=\sigma_i'$ are all tempered $\delta$-elliptic
$\bH$-modules (and have the same central character as $\pi$).
\end{proof}

\subsection{Induction and restriction in $R(\bH)$}\label{sec:4.4} If $K\subset
J(\subset I)$ are given, denote by $i_K^J: R(\bH_K)\to R(\bH_J)$ the
functor of induction, and by $r_K^J: R(\bH_J)\to R(\bH_K)$ the functor
of restriction. We also have the corresponding functors, denoted again
by $i_K^J$ and $r_K^J$ between $R^\delta(\bH_K)$ and $R^\delta(\bH_J).$
The following lemma is the analogue of \cite[Lemma 5.4]{BDK} and
\cite[Lemma 2.1]{Fl}.

\begin{lemma}\label{l:ind-restr}
\begin{enumerate}
\item[(i)] For $L\subset K\subset J\subset I,$ $i_L^J=i_K^J\circ
  i_L^K$ and $r_L^J=r_L^K\circ r_K^J.$
\item[(ii)] For $J,K\subset I$, let $^K W^J$ be a set of
  representatives of minimal length for $W_K\backslash W/W_J$. Then
\begin{equation}
r_K^I\circ i_J^I=\sum_{w\in {}^K W^J} i_{K_w}^K\circ w\circ r_{J_w}^J,
\end{equation}
where $K_w=K\cap wJw^{-1}$ and $J_w=J\cap w^{-1}K w.$
\item[(iii)] If $K=wJw^{-1}$, then $i_K^I\circ w=i_J^I.$
\item[(iv)] If $K=\delta(K)$ and $J=\delta(J),$ let
  $^K W^J(\delta)\subset {}^K W^J$
  denote the subset of $\delta$-fixed elements. Then
\begin{equation}
r_K^I\circ i_J^I=\sum_{w\in {}^K W^J(\delta)} i_{K_w}^K\circ w\circ r_{J_w}^J,
\end{equation}
as functors from $R^\delta(\bH_J)$ to $R^\delta(\bH_K).$
\end{enumerate}
\end{lemma}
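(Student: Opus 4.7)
The plan is to prove the four parts in order, with (ii) doing the main technical work and the others following either formally or by a short $\delta$-equivariant bookkeeping argument.

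Part (i) is immediate once induction is modelled as the tensor product $i_L^J(X) = \bH_J \otimes_{\bH_L} X$: associativity of tensor product gives $\bH_J \otimes_{\bH_L} X \cong \bH_J \otimes_{\bH_K}(\bH_K \otimes_{\bH_L} X)$, yielding $i_L^J = i_K^J \circ i_L^K$, and transitivity of restriction is transitivity of restriction of scalars.

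For (ii) I want a Mackey-type decomposition of $\bH$ as an $(\bH_K, \bH_J)$-bimodule. Combining the PBW isomorphism $\bH \cong S(V) \otimes \bC[W]$ with the minimal-length double-coset decomposition $W = \bigsqcup_{w \in {}^K W^J} W_K \cdot w \cdot W_J$, one produces a filtration of $\bH$ by $(\bH_K, \bH_J)$-sub-bimodules whose associated graded is
\[
\bigoplus_{w \in {}^K W^J} \bH_K \otimes_{\bH_{K_w}} \bigl(w \cdot \bH_J\bigr),
\]
with $K_w = K \cap wJw^{-1}$ and $J_w = J \cap w^{-1}Kw$. Applying this to $i_J^I(X) = \bH \otimes_{\bH_J} X$ and then restricting to $\bH_K$ gives the identity in $R(\bH_K)$ summand by summand via the natural isomorphism $\bH_K \otimes_{\bH_{K_w}}(w \cdot r_{J_w}^J(X)) \cong i_{K_w}^K \circ w \circ r_{J_w}^J(X)$. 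Part (iii) is then a direct check: conjugation by $w$ is an algebra isomorphism $\bH_J \to \bH_K$, and the map $h \otimes (w \cdot x) \mapsto hw \otimes x$ realizes the isomorphism $\bH \otimes_{\bH_K}(w \cdot X) \cong \bH \otimes_{\bH_J} X$.

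Part (iv) is extracted from (ii) by $\delta$-equivariance. The hypothesis $\delta(J) = J$, $\delta(K) = K$ ensures that $w \mapsto \delta(w)$ permutes ${}^K W^J$, and for $X \in R^\delta(\bH_J)$ (so $^\delta X \cong X$) one has ${}^\delta\bigl(i_{K_w}^K \circ w \circ r_{J_w}^J(X)\bigr) \cong i_{K_{\delta(w)}}^K \circ \delta(w) \circ r_{J_{\delta(w)}}^J(X)$. A non-trivial $\delta$-orbit $\{w, \delta(w), \ldots, \delta^{m-1}(w)\} \subset {}^K W^J$ therefore contributes a sum $Y + {}^\delta Y + \cdots + {}^{\delta^{m-1}} Y$ whose canonical $\delta$-intertwiner cyclically permutes the summands, hence has zero diagonal and vanishing $\delta$-twisted trace; such a term represents the zero class in the target $R^\delta(\bH_K)$ (paired with $R^*_\delta(\bH_K)$ via $\tr^\delta$). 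Only the $\delta$-fixed coset representatives survive, giving (iv).

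The main obstacle will be making the bimodule decomposition in (ii) rigorous: the commutation relation $f \cdot w = w \cdot w^{-1}(f) + \sum_{w' < w} w' f_{w'}$ prevents $\bH$ from splitting literally as an $(\bH_K, \bH_J)$-bimodule. I would resolve this by passing to the degree filtration of Section \ref{sec:1.4}, whose associated graded is the parameter-zero algebra $\bC[W] \ltimes S(V)$ where Mackey's theorem reduces to its classical semidirect-product version, and then lifting the resulting identity of classes back from the graded to the filtered setting inside $R(\bH_K)$.
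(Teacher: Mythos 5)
Your overall skeleton (double-coset filtration for (ii), $\delta$-orbit cancellation for (iv)) matches the paper's, but the step you lean on to make (ii) rigorous does not work. You propose to handle the commutation terms by passing to the degree filtration of section \ref{sec:1.4}, proving the Mackey identity for the parameter-zero algebra, and then ``lifting the resulting identity of classes back'' to $R(\bH_K)$. There is no such transfer: the associated graded of the degree filtration is a \emph{different} algebra $\bH_0$, whose module category and Grothendieck group are not comparable to those of $\bH_K$ in the required way. An $\bH_J$-module $\sigma$ has no canonical compatible filtration whose associated graded is an $\bH_{0,J}$-module remembering the $S(V)$-action, and the identity in (ii) genuinely involves that action (the twists $w\circ r^J_{J_w}$ move $S(V)$-weights), so an identity in $R(\bH_{0,K})$ says nothing about one in $R(\bH_K)$. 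The paper avoids any deformation to $k=0$: it orders ${}^KW^J=\{w_1,\dots,w_t\}$ by length, filters $\bH$ by the $(\bH_K,\bH_J)$-bimodules $\bH_s=\sum_{i\le s}\bH_K w_i\bH_J$, and checks directly that the map $B_s\colon \overline E_s\to \bH_K\otimes_{\bH_{K_{w_s}}}w_s\circ(\sigma|_{\bH_{J_{w_s}}})$ is well defined; the point is that the correction terms in $a\,w_s=w_s\,w_s^{-1}(a)+\sum_{u<w_s}u\,a_u$ lie in strictly shorter double cosets and hence die in the subquotient $E_s/E_{s-1}$, after which a dimension count forces each $B_s$ to be an isomorphism. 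This is the argument you need to carry out at the level of $\bH$ itself.

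Part (iii) as you state it is also wrong: conjugation inside $\bH$ by the group element $w$ is not the naive isomorphism $\bH_J\to\bH_K$ (we have $f\cdot w=w\cdot w^{-1}(f)+\sum_{w'<w}w'f_{w'}$ with nonzero lower-order terms when $k\neq 0$), so the assignment $h\otimes(w\cdot x)\mapsto hw\otimes x$ is not well defined; moreover the conclusion fails at the module level --- already for $J=K=\emptyset$ the principal series $i_\emptyset(\chi_\nu)$ and $i_\emptyset(\chi_{w\nu})$ are in general non-isomorphic, and $i_K^I\circ w=i_J^I$ is only an identity in the Grothendieck group. The paper proves it from (ii) together with the parabolic-induction part of the Langlands classification (Theorem \ref{t:Langlands}), following BDK, Lemma 5.4. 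Your treatment of (iv) --- non-$\delta$-fixed cosets grouping into orbits whose canonical intertwiner cyclically permutes the summands and so contributes zero twisted trace --- is in the spirit of the argument of Flicker that the paper cites, though you should be explicit about the sense in which the resulting class vanishes in $R^\delta(\bH_K)$ (via the twisted trace pairing), since an orbit sum of non-$\delta$-stable irreducibles is not literally an element of the span of $\Irr^\delta\bH_K$.
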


\begin{proof}
Claim (i) is obvious in our setting. 

We prove claim (ii). We need to prove that
\begin{equation}
\bH\otimes_{\bH_J}\sigma=\bigoplus_{w\in{}^KW^J}\bH_K\otimes_{\bH_{K_w}}
w\circ \sigma|_{\bH_{J_w}},\text{ as left }\bH_K\text{-modules,}
\end{equation}
where $K_w=K\cap wJ w^{-1}$ and $J_w=w^{-1}K_w w.$

Let $\{w_1,w_2,\dots,w_t\}$ be the set of
elements in ${}^KW^J$ ordered such that $\ell(w_i)\le \ell(w_{i+1}).$
As  $(\bH_K,\bH_J)$-bimodules, 
$$\bH=\sum_{i=1}^t \bH_Kw_i\bH_J.$$
Define a $(\bH_K,\bH_J)$-bimodule filtration of $\bH$ by setting
$\bH_s=\sum_{i\le s}\bH_Kw_i\bH_J.$ Then
$E_s=\bH_s\otimes_{\bH_J}\sigma$ defines a left $\bH_K$-module filtration of
$\bH\otimes_{\bH_J}\sigma$. Set
$\overline E_s=E_s/E_{s-1},$ a left $\bH_K$-module.
Then 
\begin{equation}\label{e:ind-assoc-graded}
\bH\otimes_{\bH_J}\sigma\cong\oplus_{s}\overline E_s,
\end{equation}
as left
$\bH_K$-modules, so one needs to prove that there exists an $\bH_K$-module isomorphism
\begin{equation}
B_s: \overline E_s\to \bH_K\otimes_{\bH_{K_{w_s}}} w_s\circ (\sigma|_{\bH_{J_{w_s}}}).
\end{equation}
Notice that $\overline E_s$ is generated by
$w_s\otimes_{\bH_J}\sigma$ as a $\bH_K$-module. Set 
\begin{equation}
B_s(hw_s\otimes v) = h\otimes_{\bH_{K_{w_s}}} \tau_{w_s}(v), \quad
h\in\bH_K,\ v\in\sigma,
\end{equation}
where $\tau_{w_s}$ is the isomorphism $\sigma\to w_s\circ\sigma.$ We
need to check that $B_s$ is well-defined. Since $\bH_{K_{w_s}}$ is
generated by $W_{K_{w_s}}$ and $S(V)$, it is sufficient to check on
these generators. 

Firstly, if $w\in W_{K_{w_s}}$, $w w_s=w_s w',$ where $w'\in
W_{J_{w_s}}$. We have $B_s(w w_s\otimes_{\bH_J}v)=w
\otimes_{\bH_{K_{w_s}}} \tau_{w_s}(v)=1\otimes_{\bH_{K_{w_s}}}
\sigma^{w_s}(w)\tau_{w_s}(v)= 1\otimes_{\bH_{K_{w_s}}} \tau_{w_s}(\sigma(w')v).$ On the
other hand, $B_s(w_s w'\otimes_{\bH_J}
v)=B_s(w_s\otimes_{\bH_J}
\sigma|_{J_{w_s}}(w')v)=1\otimes_{\bH_{K_{w_s}}} \tau_{w_s}(\sigma(w')v).$

Secondly, let $a\in S(V)$ be given. Then $$a w_s=w_s\cdot
w_s^{-1}(a)+\sum_{u<w_s} u a_u,$$
for some $u\in W$, $a_u\in S(V).$ This means that $a w_s\equiv w_s\cdot w_s^{-1}(a)$
in $\overline E_s$, i.e., modulo $E_{s-1}$. In the same way as for
$w\in W_{K_{w_s}}$, it is then easy to see that $B_s(a w_s)=B_s(w_s\cdot
w_s^{-1}(a)).$

Thus $B_s$ is well-defined, and it is clearly a surjective
$\bH_K$-homomorphism. Thus $\dim \overline E_s\ge \dim
\bH_K\otimes_{\bH_{K_{w_s}}} w_s\circ
(\sigma|_{\bH_{J_{w_s}}})=|W_K/W_{K_{w_s}}| \dim\sigma.$ Summing over
$s$ and using (\ref{e:ind-assoc-graded}), we find
$|W/W_J|\dim\sigma=\sum\dim \overline E_s\ge \sum_s |W_K/W_{K_{w_s}}|
\dim\sigma.$ But since $|W/W_J|=\sum_s |W_K/W_{K_{w_s}}|,$ it follows
that every $B_s$ must in fact be an isomorphism. Claim (ii) is proved.

Claim (iii) follows from (ii) and the parabolic induction part
of Langlands classification (Theorem \ref{t:Langlands}) identically with
the proof of Lemma 5.4 of \cite{BDK}. For (iv), one can adapt the
proof of (ii) exactly as in \cite[Lemma 2.1(iv)]{Fl}.

\end{proof}

For every $J=\delta(J)\subset I$, define the operator
\begin{equation}
T_J: R^\delta(\bH)\to R^\delta(\bH), \quad T_J=i_J^I\circ r_J^I.
\end{equation}
Formal manipulations with the properties in Lemma \ref{l:ind-restr}
yield the following formulas (see \cite[Corollary 5.4]{BDK}).
\begin{lemma}\label{l:Toper}
\begin{enumerate}
\item[(i)] $T_K\circ i_J^I=\displaystyle{\sum_{w\in {}^K W^J(\delta)}} i_{J_w}^I\circ
  r^J_{J_w},$ where $J_w=J\cap w^{-1} K w.$
\item[(ii)] $T_K\circ T_J=\displaystyle{\sum_{w\in {}^K W^J(\delta)}} T_{J_w}.$
\end{enumerate}
\end{lemma}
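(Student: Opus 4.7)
The plan is to derive both formulas by unwinding the definition $T_J = i_J^I \circ r_J^I$ and repeatedly applying the four parts of Lemma \ref{l:ind-restr}. Everything is formal once one has the twisted Mackey formula (iv) in hand, so the proof should be short.

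For part (i), I would start from
\begin{equation*}
T_K \circ i_J^I = i_K^I \circ r_K^I \circ i_J^I
\end{equation*}
and substitute the twisted Mackey decomposition from Lemma \ref{l:ind-restr}(iv) to obtain
\begin{equation*}
T_K \circ i_J^I = \sum_{w \in {}^K W^J(\delta)} i_K^I \circ i_{K_w}^K \circ w \circ r_{J_w}^J,
\end{equation*}
with $K_w = K \cap wJw^{-1}$ and $J_w = J \cap w^{-1}Kw$. By transitivity of induction (part (i) of the lemma), $i_K^I \circ i_{K_w}^K = i_{K_w}^I$. The next step is the small algebraic observation that
\begin{equation*}
w J_w w^{-1} = w(J \cap w^{-1}Kw)w^{-1} = wJw^{-1} \cap K = K_w,
\end{equation*}
so part (iii) of Lemma \ref{l:ind-restr} applies and yields $i_{K_w}^I \circ w = i_{J_w}^I$. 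Assembling these gives the stated formula.

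For part (ii), I would simply compose with $r_J^I$ on the right: using part (i) just established and then the transitivity of restriction (Lemma \ref{l:ind-restr}(i)),
\begin{equation*}
T_K \circ T_J = (T_K \circ i_J^I) \circ r_J^I = \sum_{w \in {}^K W^J(\delta)} i_{J_w}^I \circ r_{J_w}^J \circ r_J^I = \sum_{w \in {}^K W^J(\delta)} i_{J_w}^I \circ r_{J_w}^I = \sum_{w \in {}^K W^J(\delta)} T_{J_w}.
\end{equation*}

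There is essentially no obstacle: the entire content is packaged in Lemma \ref{l:ind-restr}(iv), whose proof was already the substantial step. The only thing to verify is the compatibility $K_w = wJ_ww^{-1}$, which is immediate from the definitions. The argument is the verbatim analogue of the classical computation in \cite[Corollary 5.4]{BDK}, simply carried out in the $\delta$-equivariant setting where the double coset sum runs over $^K W^J(\delta)$ rather than $^K W^J$.
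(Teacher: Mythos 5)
Your argument is correct and is exactly the ``formal manipulation'' the paper intends: it leaves Lemma \ref{l:Toper} unproved apart from pointing to Lemma \ref{l:ind-restr} and \cite[Corollary 5.4]{BDK}, and your chain (twisted Mackey formula, transitivity of $i$ and $r$, the identity $K_w=wJ_ww^{-1}$ feeding into part (iii)) is that computation carried out in the $\delta$-equivariant setting, where $\delta$-stability of $J_w$ follows since $w$ is $\delta$-fixed and $J,K$ are $\delta$-stable.
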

As a consequence, one sees that the operators $T_K$ respect the
filtration $\{R^\delta_\ell(\bH)$ from section
\ref{sec:4.2}. Moreover, if $\ell=|K|$, then $T_K$ acts on the
quotient $\overline R^\delta_\ell(\bH)$ by:
\begin{equation}
T_K i_J^I(\sigma)=\begin{cases}|W_K^\delta| i_J^I(\sigma),&\text{ if
  }J \sim K,\\
0,&\text{ if }|J|=\ell\text{ and }J\not\sim K.\end{cases}
\end{equation}
As in \cite[section 5.5]{BDK}, define 
\begin{equation}
A_\ell=\prod_{K=\delta(K), |K|=\ell}(T_K-|W_K^\delta|),\text{ and }
A=A_{|I|}\circ A_{|I|-1}\circ\dots\circ A_0.
\end{equation}
Since every $A_\ell$ preserves the filtration and kills $\overline
R^\delta_\ell(\bH),$ it follows that $A$ kills $R^\sigma_{\Ind}(\bH).$
\begin{definition}
A linear form $f\in R^*_\delta(\bH)$ is called $\delta$-elliptic if
$f(R^\delta_{\Ind}(\bH))=0.$ Let $R^*_\delta(\bH)_0$ denote the space
of $\delta$-elliptic linear forms.
\end{definition}
Thus, the adjoint operator $A^*: R^*_\delta(\bH)\to R^*_\delta(\bH)$
has the image in $R^*_\delta(\bH)_0$. On the other hand, from Lemma
\ref{l:Toper}, it is apparent that $A$ is of the form
$A=a+\sum_{J=\delta(J)\subsetneq I} c_J' T_J$, for some integers
$a\neq 0$ and
$c_J'.$ But then $A^*=a+\sum_{J=\delta(J)\subsetneq I} c_J' T_J^*,$
where $T_J^*=r_J^*\circ i_J^*.$ (Here for simplicity, we write $r_J^*$
instead of $(r_J^I)^*$ and similarly for $i_J^*$.) Set $\C A=\frac 1a
A^*.$ 

\subsection{Inclusion and restriction for $\bH$}\label{sec:4.5}
For every $J\subset I$, let $\wti i_J: \bH_J\to \bH$
denote the inclusion. Define $\wti r_J:\bH\to \bH_J$ as follows. Given
$h\in \bH$, let $\psi_h:\bH\to\bH$ be the linear map given by left
multiplication by $h.$ This can be viewed as a right $\bH_J$-module
morphism. Since $\bH$ is free of finite rank right $\bH_J$-module, with
basis $W^J$, one can consider $\tr\psi_h\in\bH_J.$ Set $\wti
r_J(h)=\tr\psi_h.$ Set $\wti T_J=\wti i_J\circ\wti r_J:\bH\to \bH.$
As in section \ref{sec:4.3}, for every $K\subset J,$ we may also
define $\wti i_K^J$ and $\wti r_K^J.$

\begin{lemma}\label{l:adjointtrace}
The maps $\wti i_J$ and $\wti r_J$ are $\Tr(~,~)$-adjoint to $r_J$ and
$i_J$, respectively,i.e.:
\begin{enumerate}
\item[(a)]$\Tr(\wti i_J(h),\pi)=\Tr(h,r_J(\pi)),$ for $h\in\bH_J,$ $\pi\in
  R(\bH)$;
\item[(b)]$\Tr(h,i_J(\pi))=\Tr(\wti r_J(h),\pi),$ for $h\in\bH$, $\pi\in R(\bH_J)$.
\end{enumerate}
Thus $\wti T_J$ is $\Tr(~,~)$-adjoint to $T_J$ as well.
\end{lemma}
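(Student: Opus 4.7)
The plan is to verify (a) and (b) directly from the definitions, and then compose them to obtain the adjointness for $\wti T_J$.

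For part (a), I would observe that for $h\in\bH_J$, the element $\wti i_J(h)$ is just $h$ viewed inside $\bH$, and the restricted module $r_J(\pi)$ has the same underlying vector space as $\pi$ with the $\bH_J$-action given by the original $\bH$-action restricted to the subalgebra. Therefore the endomorphisms $\pi(\wti i_J(h))$ and $r_J(\pi)(h)$ are literally the same linear operator on the same vector space, so their traces coincide. This step is essentially tautological.

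For part (b), the key input is that $\bH$ is free as a right $\bH_J$-module on the basis $W^J$ of minimal-length coset representatives. Writing $h\cdot w = \sum_{w'\in W^J} w' \cdot h_{w',w}$ with $h_{w',w}\in\bH_J$ gives the matrix of $\psi_h$ (left multiplication by $h$) in this basis, and by definition $\wti r_J(h)=\tr\psi_h=\sum_{w\in W^J}h_{w,w}\in\bH_J$. On the other hand, as a $\bC$-vector space $i_J(\pi)=\bigoplus_{w\in W^J} w\otimes \pi$, and
\begin{equation*}
h\cdot(w\otimes v)=\sum_{w'\in W^J} w'\otimes h_{w',w}\cdot v,
\end{equation*}
so the diagonal block of $i_J(\pi)(h)$ corresponding to the summand $w\otimes \pi$ is the operator $v\mapsto h_{w,w}\cdot v$. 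Summing diagonal contributions and using linearity of the trace on $\pi$,
\begin{equation*}
\Tr(h,i_J(\pi))=\sum_{w\in W^J}\tr_\pi(h_{w,w})=\tr_\pi\!\left(\sum_{w\in W^J}h_{w,w}\right)=\tr_\pi(\wti r_J(h))=\Tr(\wti r_J(h),\pi).
\end{equation*}

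The final statement that $\wti T_J=\wti i_J\circ\wti r_J$ is $\Tr$-adjoint to $T_J=i_J\circ r_J$ follows by composing (a) and (b): for $h\in\bH$ and $\pi\in R(\bH)$,
\begin{equation*}
\Tr(\wti T_J h,\pi)=\Tr(\wti i_J(\wti r_J h),\pi)=\Tr(\wti r_J h,r_J\pi)=\Tr(h,i_J r_J\pi)=\Tr(h,T_J\pi).
\end{equation*}
There is no real obstacle here; the only point requiring care is choosing the right basis $W^J$ so that the computation of the matrix trace of left multiplication by $h\in\bH$ on $i_J(\pi)$ cleanly factors through the element $\wti r_J(h)\in\bH_J$ defined via $\tr\psi_h$.
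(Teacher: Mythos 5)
Your proposal is correct and follows essentially the same route as the paper: part (a) is treated as immediate, and part (b) is proved by expanding left multiplication by $h$ over the basis $W^J$ of $\bH$ as a free right $\bH_J$-module and matching the diagonal blocks of $i_J(\pi)(h)$ with $\wti r_J(h)=\tr\psi_h$ (the paper phrases the diagonal extraction via an orthonormal basis, you via the block decomposition, which is only a cosmetic difference). The concluding composition giving the adjointness of $\wti T_J$ and $T_J$ is likewise the same.
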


\begin{proof}
Claim (a) is obvious. For (b), let $M$ be the space of the
representation $\pi$ and $\{v_1,\dots,v_n\}$ an orthonormal basis of
$M$ with
respect to an inner product $\<~,~\>_M$. A basis for $i_J(\pi)$ is
$\{x\otimes v_i: i=1,n,\ x\in W^J\}.$ Define $\<~,~\>$ on $i_J(\pi)$
by declaring this basis orthonormal. For $h\in \bH$, view left
multiplication of $h$ as a right $\bH_J$-module map $\bH\to \bH$, and
then $$h\cdot x=x
h_x+\sum_{x'\in W^J\setminus\{x\}} x' h_{x'},$$
for some $h_x,h_{x'}\in\bH_J.$ Notice that $\wti r_J(h)=\sum_{x\in
  W^J} h_x.$ Then
\begin{align*}
\Tr(h,i_J(\pi))&=\sum_{i=1}^n\sum_{x\in W^J}\<h\cdot x\otimes
v_i,v_i\>=\sum_{i=1}^n \sum_{x\in W^J}\<x h_x\otimes v_i,v_i\>\\
&=\sum_{i=1}^n
\sum_{x\in W^J}\<x\otimes \pi(h_x)v_i,x\otimes
v_i\>=\sum_{i=1}\sum_{x\in W^J}\<\pi(h_x)v_i,v_i\>_M=\Tr(\wti
r_J(h),\pi).
\end{align*}
\end{proof}

The analogous discussion with section \ref{sec:4.4} holds
here and also the $\delta$-twisted version. In particular, define the
filtration of $\bH$:
$$\mathcal E^0_\delta\bH\supset \mathcal E^1_\delta\bH\supset\dots\supset\mathcal
E^\ell_\delta\bH\supset\dots,$$
where $\mathcal E^\ell_\delta\bH=\sum_{J\subset I, \delta(J)=J, |J|\le
  |I|-\ell} \wti i_J(\bH_J)=\sum_{J\subset I, \delta(J)=J, |J|\le
  |I|-\ell} \bH_J.$ Set $\overline{\mathcal E\bH}^\ell_\delta=\mathcal
E^\ell_\delta\bH/\mathcal E^{\ell+1}_\delta\bH$ and 
$$\wti A_\ell=\prod_{\delta(K)=K,|K|=\ell} (\wti
T_K-|W_K^\delta|),\quad \wti A=\wti A_{|I|}\circ A_{|I|-1}\circ\dots.$$
As before, $\wti A_\ell$ preserves the filtration and kills $\mathcal
E^{\ell+1}_\delta\bH.$ Thus
\begin{equation}\label{e:Atilde}
\wti A(\sum_{J\subset I, \delta(J)=J}\bH_J)=0.
\end{equation}

\subsection{Proof of Theorem \ref{t:PW}}\label{sec:4.6} 
Denote $\Irr^\delta(\bH)_\el=\cc^{-1}(\Theta^\delta(\bH)_0)$, and
recall that $\cc$ is
finite-to-one. Set $R^\delta(\bH)_\el$ to be the linear span of $\Irr^\delta(\bH)_\el$.

\begin{lemma}\label{l:finite-elliptic}
The space $\overline R^\delta_0(\bH)$ is finite-dimensional, in particular,
the set $\Irr^\delta(\bH)_\el$ is finite.
\end{lemma}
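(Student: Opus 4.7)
The plan follows the strategy of \cite{BDK,Fl}. By Lemma \ref{l:temp-repr}, every $\delta$-elliptic central character equals $\cc(\pi)$ for some $\delta$-stable irreducible $\delta$-elliptic tempered $\bH$-module $\pi$. Since the central character map $\cc$ is finite-to-one, it is enough to show that $\overline R^\delta_0(\bH)$ is finite-dimensional, or equivalently that the set of $\delta$-stable irreducible tempered elliptic $\bH$-modules is finite.

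The first step is to use Theorem \ref{t:Langlands} to reduce the spanning set of $\overline R^\delta_0(\bH)$ to tempered modules. For $\pi=L(J,\sigma,\nu)$ with $J\subsetneq I$, one writes $\pi=X(J,\sigma,\nu)-\sum_k L(J_k,\sigma_k,\nu_k)$ in $R(\bH)$, and by the remark immediately after Theorem \ref{t:Langlands}, each $\Re\nu_k$ is strictly smaller than $\Re\nu$ in the partial order on $V_0^\vee$. Since $X(J,\sigma,\nu)\in R^\delta_\Ind(\bH)$ whenever $J\subsetneq I$, and since only finitely many $\Re\nu'$ occur at a fixed central character $W\cdot\nu$, an induction on $\Re\nu$ yields $[L(J,\sigma,\nu)]=0$ in $\overline R^\delta_0(\bH)$. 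Hence $\overline R^\delta_0(\bH)$ is spanned by classes of tempered $\delta$-stable irreducible $\bH$-modules.

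The main obstacle is then to establish finiteness of the $\delta$-stable elliptic tempered irreducibles of $\bH$. By the Knapp--Stein type description of tempered modules for graded affine Hecke algebras, every tempered irreducible appears as a direct summand of $\Ind_{\bH_J}^\bH(\tau\otimes\chi_\nu)$ for some $J\subseteq I$, discrete series $\tau$ of $\bH_J^\sem$, and $\nu$ with $\Re\nu=0$; summands with $J\subsetneq I$ lie in $R^\delta_\Ind(\bH)$ and are hence not elliptic. Consequently, the image of $\Irr^\delta(\bH)_\el$ in $\overline R^\delta_0(\bH)$ is accounted for by $\delta$-stable discrete series modules of $\bH$ itself. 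Finally, the central characters of discrete series modules are precisely the residual points of $V^\vee/W$ in the sense of Heckman--Opdam, and these form a finite subset: for positive real parameters this is Opdam's classification, while for the general complex parameter functions $k$ considered here, finiteness is extracted either by a scaling/deformation argument from the real case or by observing that a discrete series is determined by a finite amount of combinatorial data (a residual $W$-orbit together with the subset $\Psi(X)\subset W\nu$ of weights actually occurring). Since $\cc$ is finite-to-one, finiteness of residual central characters yields $|\Irr^\delta(\bH)_\el|<\infty$ and $\dim\overline R^\delta_0(\bH)<\infty$.
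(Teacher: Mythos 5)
Your proposal follows the classical Bernstein--Deligne--Kazhdan/Flicker route, which is precisely the route the paper deliberately avoids (see the Remark immediately after Lemma \ref{l:finite-elliptic}): that argument needs the $*$-unitarity of tempered modules, hence Opdam's results, which are only available for real-valued parameters, whereas the lemma is asserted for arbitrary complex parameter functions $k$. The paper instead gives a cocenter argument: if $\pi_1,\dots,\pi_k$ have linearly independent images in $\overline R^\delta_0(\bH)$, one applies the operator $A$ (which acts by a nonzero scalar on $\overline R^\delta_0(\bH)$), uses linear independence of characters to find $h_1,\dots,h_k$ with $(\Tr(h_i\delta,A(\pi_j)))$ invertible, transfers $A$ to $\wti A$ on the algebra side via the adjunction of Lemma \ref{l:adjointtrace}, and uses the vanishing (\ref{e:Atilde}) of $\wti A$ on $\sum_{J=\delta(J)}\bH_J$ to conclude $k\le \dim \bH/([\bH,\bH]_\delta+\mathcal E^1_\delta\bH)$, which by the spanning set of Proposition \ref{p:span} is at most the number of $\delta$-elliptic conjugacy classes in $W$. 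This works uniformly in $k$ and requires no classification of discrete series.

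Beyond the difference in method, your argument has genuine gaps. First, the assertion that irreducible direct summands of $\Ind_{\bH_J}^\bH(\tau\otimes\chi_\nu)$ with $J\subsetneq I$ ``lie in $R^\delta_\Ind(\bH)$ and are hence not elliptic'' is false: only the full induced module lies in $R^\delta_\Ind(\bH)$, and its individual constituents can very well have nonzero image in $\overline R^\delta_0(\bH)$ (this is exactly how elliptic tempered modules that are not discrete series arise, via nontrivial $R$-groups). So the reduction of $\Irr^\delta(\bH)_\el$ to discrete series does not go through. Second, even granting that reduction, the finiteness of discrete series central characters via residual points is a theorem of Opdam for real (positive) parameters; the ``scaling/deformation argument'' and the ``finite amount of combinatorial data'' remarks are not proofs and are not available in the generality of complex $k$ needed here --- avoiding exactly this dependence is the point of the paper's argument. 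A smaller inaccuracy: the triangularity induction does not give $[L(J,\sigma,\nu)]=0$ in $\overline R^\delta_0(\bH)$ for all $J\subsetneq I$ (the trivial module, for instance, is non-tempered yet has nonzero elliptic image); what it gives, as in Lemma \ref{l:temp-repr}, is that every class in $\overline R^\delta_0(\bH)$ is an integral combination of classes of tempered modules with the same central character.
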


\begin{proof}
Suppose $\{\pi_1,\pi_2,\dots,\pi_k\}$ is a set in $\subset
  R^\delta(\bH)$ such that its image in $\overline R^\delta_0(\bH)$ is
  linearly independent. Applying the operator $A$ from section
  \ref{sec:4.3}, one obtains a linearly independent set
  $\{A(\pi_1),A(\pi_2),\dots,A(\pi_k)\}$ in $R^\delta(\bH).$ This is
  because $A(\pi)\equiv a \pi$ in $\overline R^\delta_0(\bH)$, for a
  nonzero integer $a$.

  Since the characters of simple modules are linear independent, so
  are the characters of any linear independent set in $R^\delta(\bH).$
  Thus there exist elements $h_1,h_2,\dots,h_k$ of $\bH$, such
  that the matrix $(\Tr(h_i\delta,A(\pi_j))_{i,j}$ is invertible. By Lemma
  \ref{l:adjointtrace}, the matrix $(\Tr(\wti A(h_i)\delta,\pi_j)_{i,j}$ is
  invertible. Since $\wti A$ vanishes on $\sum_{J,\delta(J)=J}\bH_J$ by
  (\ref{e:Atilde}), it follows that
$$k\le \dim \bH/\left([\bH,\bH]_\delta+\mathcal E^1_\delta\bH\right).$$
By Proposition \ref{p:span} proved in section \ref{sec:6}, the right hand side is bounded above by
the number of $\delta$-elliptic conjugacy classes in $W$. This proves
the first claim.

For the second claim, for every central character $\lambda\in
\Theta^\delta(\bH)_0$, let $R^\delta(\bH)_\lambda$ be the span of
$\cc^{-1}(\lambda)\subset \Irr^\delta\bH$, and let $\overline
R^\delta_0(\bH)_\lambda$ be the image of $R^\delta(\bH)_\lambda$ in
$\overline R^\delta_0(\bH).$ Then 
$$R^\delta(\bH)_\el=\bigoplus_{\lambda\in \Theta^\delta(\bH)_0}
R^\delta(\bH)_\lambda\text{ and }\overline R^\delta_0(\bH)=\bigoplus_{\lambda\in \Theta^\delta(\bH)_0}
\overline R^\delta_0(\bH)_\lambda.$$
This is because irreducible $\bH$-modules with different central
characters are necessarily independent, and the central character is
the same for all constituents of a parabolically induced from a module
with central character.

Since $\overline R^\delta_0(\bH)$ is finite dimensional, then
$\Theta^\delta(\bH)_0$ must be finite. Since $\cc$ is a finite to one
map, $\Irr^\delta(\bH)_\el$ is also finite.
\end{proof}

\begin{remark} The proof of Lemma \ref{l:finite-elliptic} we presented
  is different than the argument from \cite{BDK}. The classical proof
  (adapted to this setting under the assumption that $k$ is real valued)
  shows that the set of $\delta$-elliptic central characters
  $\Theta^\delta(\bH)_0$ is finite, as follows.
 Firstly, the set $\Theta^\delta(\bH)_0$ is a finite union
of locally closed (in the Zarisky topology) subsets of $\Theta$, see
\cite[Proposition 1.1]{Fl}. Secondly, let $*: \Theta(\bH)\to
\Theta(\bH)$ be the anti-algebraic involution given by the hermitian dual. More
precisely, if $\nu\in \Theta(\bH)$ is the central character of an
irreducible module $\pi$, let $\nu^*$ be the central character of the
hermitian dual of $\pi$ with respect to the operation $*$ from section
\ref{sec:1.3}. Since $k$ is real, it follows from \cite[Proposition 2.35]{Op} that every tempered $\bH$-module is
$*$-unitary. In particular, using Lemma \ref{l:temp-repr}, $\nu=\nu^*$ for every 
 $\nu\in \Theta^\delta(\bH)_0.$ It follows that $\Theta^\delta(\bH)_0$
 is finite.
\end{remark}

Let $f\in R^*_\delta(\bH)_\good$ be given. Since $\Irr^\delta(\bH)_\el$ is
a finite set and the (twisted) characters of irreducible $\bH$-modules are
linearly independent, we can
choose $f_1\in R^*_\delta(\bH)_{\tr}$ such that $f(\pi)=f_1(\pi)$ for
all $\pi\in \Irr^\delta(\bH)_\el.$ By replacing $f$ with $f-f_1$, we
may therefore assume, without loss of generality, that
$f(R^\delta(\bH)_\el)=0.$

Apply to $f$ the operator $\C A$ defined in the previous section. Then
$\C A(f)\in R^*_\delta(\bH)_0$, i.e., 
$\C A(f)(R^\delta_{\Ind}(\bH))=0.$ Recall that $\C
A=1+\sum_{J=\delta(J)\subsetneq I} c_J T_J^*,$ for some
$c_J\in\mathbb Q.$  The operators $T_J$ preserve central characters,
and therefore preserve $R^\delta(\bH)_\el$, hence $\C A(f)$ vanishes
on $R^\delta(\bH)_\el$, because $f$ does. Since
$R^\delta(\bH)=R^\delta(\bH)_\el+R^\delta_{\Ind}(\bH)$, it follows
that $\C A(f)=0$, and thus $f=-\sum_{J=\delta(J)\subsetneq I} c_J
T_J^*(f).$

By induction on $|J|$, we may assume that $
R^*_\delta(\bH_J^\sem)_\good= R^*_\delta(\bH_J^\sem)_{\tr}.$ Since
$\bH_J=\bH_J^\sem\otimes S(V^{W_J}),$ it is straightforward that also
$R^*_\delta(\bH_J)_\good= R^*_\delta(\bH_J)_{\tr}.$ It is also easy to
see that
\begin{equation}
i_J^*(R^*_\delta(\bH)_\good)\subset R^*_\delta(\bH_J)_\good\text{ and }r_J^*(R^*_\delta(\bH_J)_{\tr})\subset R^*_\delta(\bH)_{\tr};
\end{equation}
in the case of $p$-adic groups, the second inclusion requires an
argument, see \cite[section 5.3]{BDK}, but since for $\bH$, $r_J$ is
just restriction, it is immediate.

Thus $T_J^*(f)=r_J^*(i_J^*(f))$ is in $R^*_\delta(\bH)_{\tr},$ and so
is $f$, concluding the proof.

\

{Now we define the good forms for $\bH'=\bH \rtimes \Gamma$. 

\begin{definition}
For any $J \subset I$ and $\sigma\in\Irr(\bH_J^\sem)$, we set $$\Gamma_{J, \s}=\{\d^i; \d^i(J)=J, \d^i \s \cong \s\}.$$ 

A form $f\in R^*(\bH')$ is called good if for every $J\subset I$, $\sigma\in\Irr(\bH_J^\sem)$ and irreducible representation $U$ of $\Gamma_{J, \s}$, 
the function $\nu\mapsto f(X(J,\sigma,\nu) \rtimes U)$ is a regular function on
the variety $(V^\vee)^{W_J\rtimes\Gamma_{J, \s}}$. Denote the subspace of good
forms by $R^*(\bH')_\good.$
\end{definition}

\

As a consequence of Theorem \ref{t:PW} and Clifford theory (section \ref{sec:3.2}), we obtain the trace Paley-Wiener Theorem for $\bH'$. 

\begin{corollary}
$R^*(\bH')_{\tr}=R^*(\bH')_{\good}$. 
\end{corollary}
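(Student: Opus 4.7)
The plan is to reduce the statement to the already-proved Theorem \ref{t:PW} via the Clifford-theoretic decompositions of both the cocenter and the Grothendieck group. First I would combine Proposition \ref{cocenter-decomp} with the decomposition \eqref{groth-decomp} to get compatible direct-sum decompositions
\[
\bar\bH'=\bigoplus_{i=0}^{d-1}\bar\bH^{[i]},\qquad R(\bH')_\bC=\bigoplus_{i=0}^{d-1}R^{[i]}(\bH')_\bC,
\]
together with the identifications $\bar\bH^{[i]}\cong(\bar\bH_{\delta^i})_\Gamma$ (via $h\mapsto h\delta^i$) and $R^{[i]}(\bH')_\bC\cong R^{\delta^i}(\bH)_{\Gamma,\bC}$ (via $X\mapsto X\rtimes U_{\Gamma_X,i}$). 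The key point is that, by Lemma \ref{l:traceH'}, the trace pairing $\bar\bH'\times R(\bH')_\bC\to\bC$ is block-diagonal: $\Tr(h\delta^i,\C O\rtimes U_{\C O,j})=0$ for $i\neq j$, and on the $(i,i)$-block it is essentially the $\delta^i$-twisted trace pairing for $\bH$, pushed through the $\Gamma$-coinvariants on both sides.

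Next I would take a good form $f\in R^*(\bH')_\good$, split it as $f=\sum_i f_i$ with $f_i$ supported on $R^{[i]}(\bH')_\bC$, and lift $f_i$ to a $\Gamma$-invariant linear form $\tilde f_i\in R^*_{\delta^i}(\bH)^\Gamma$ under the identification above. I would then verify that each $\tilde f_i$ is a good form on $R^{\delta^i}(\bH)$ in the sense of Definition \ref{d:goodforms}: given $J=\delta^i(J)$ and $\sigma\in\Irr^{\delta^i}\bH_J^\sem$, choose the orbit $\C O$ of $X(J,\sigma,\nu)$ and a character $U=U_{\C O,i}$ of $\Gamma_{\C O}\supseteq\langle\delta^i\rangle$; the formula in Lemma \ref{l:traceH'} expresses $f(X(J,\sigma,\nu)\rtimes U)$ as a nonzero scalar times a sum of $\Gamma$-translates of $\tilde f_i\bigl(X(J,\sigma,\nu)\bigr)$, and the parameter variety $(V^\vee)^{W_J\rtimes\Gamma_{J,\sigma}}$ in the $\bH'$-goodness condition coincides with $(V^\vee)^{W_J\rtimes\delta^i}$ once one restricts to the relevant inertia subgroup. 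Regularity in $\nu$ thus transfers from $f$ to $\tilde f_i$.

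By Theorem \ref{t:PW} each $\tilde f_i$ is a trace form, i.e.\ there is $h_i\in\bar\bH_{\delta^i}$ with $\tilde f_i=\tr^{\delta^i}(h_i)$; because $\tilde f_i$ is $\Gamma$-invariant, its class in $R^*_{\delta^i}(\bH)^\Gamma$ is represented by the image of $h_i$ in $(\bar\bH_{\delta^i})_\Gamma=\bar\bH^{[i]}$. Assembling these classes through the isomorphism $\bar\bH^{[i]}\cong\bH\delta^i/([\bH',\bH']\cap\bH\delta^i)$ produces an element of $\bar\bH'$ whose trace is $f$. The reverse inclusion $R^*(\bH')_{\tr}\subseteq R^*(\bH')_\good$ is routine, since induced modules of the form $X(J,\sigma,\nu)\rtimes U$ have their traces given by polynomial (hence regular) functions of $\nu$, as was already used implicitly for $\bH$.

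I expect the only real obstacle to be bookkeeping in step (2)--(3): one must match the two parametrizations of good forms carefully, especially when $\Gamma_{J,\sigma}$ is strictly larger than $\langle\delta^i\rangle$, so that the single condition on $\bH'$ decomposes into the correct family of $\delta^i$-twisted conditions on $\bH$ after averaging over $\Gamma/\Gamma_{\C O}$; once this identification is established, the result is a direct application of Theorem \ref{t:PW} and Lemma \ref{l:traceH'}.
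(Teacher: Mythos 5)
Your proposal follows essentially the same route as the paper: decompose $\bar\bH'$ and $R(\bH')_\bC$ into twisted components via Proposition \ref{cocenter-decomp} and (\ref{groth-decomp}), use Lemma \ref{l:traceH'} to see the trace pairing is block-diagonal, check that an $\bH'$-good form restricts to $\Gamma$-invariant $\delta^i$-good forms on each block, and then apply Theorem \ref{t:PW} componentwise before reassembling. The bookkeeping issue you flag (matching the goodness conditions when $\Gamma_{J,\sigma}$ is larger than $\langle\delta^i\rangle$) is treated equally briefly in the paper, so your argument is a faithful reconstruction of its proof.
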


\begin{proof}
It is obvious that $R^*(\bH')_{\tr} \subset R^*(\bH')_{\good}$. By (\ref{groth-decomp}), $$R(\bH')_\bC=\oplus_{i=0}^{d-1} R^{[i]} (\bH')_\bC \cong \oplus_{i=0}^{d-1} R^{\d^i}(\bH)_{\Gamma, \bC}.$$ Hence $$R^*(\bH')=\oplus _{i=0}^{d-1} \Hom_\bC(R^{\d^i}(\bH)_{\Gamma, \bC}, \bC)=\oplus_{i=0}^{d-1} \Hom_\bC(R^{\d^i}(\bH)_\bC, \bC)^\Gamma=\oplus_{i=0}^{d-1} R^*_{\d^i}(\bH)^\Gamma.$$ 

By definition, $R^*(\bH')_{\good} \subset \oplus_{i=0}^{d-1} R^*_{\d^i}(\bH)^\Gamma_{\good}$, where $R^*_{\d^i}(\bH)^\Gamma_{\good}=R^*_{\d^i}(\bH)^\Gamma \cap R^*_{\d^i}(\bH)_{\good}$. Notice that for $0 \le i, j<d$, $\tr(\bH \d^i) \mid_{R^{[j]}(\bH')_\bC}=0$ unless $i=j$. 

By Theorem \ref{t:PW}, the image of the map $\tr_{\d^i}: \bH/[\bH, \bH]_{\d^i} \to R^*_{\d^i}(\bH)$ is $R^*_{\d^i}(\bH)_{\good}$. Hence the image of $\bar \bH^{[i]}$ is $R^*_{\d^i}(\bH)^\Gamma_{\good}$. By Proposition \ref{cocenter-decomp}, $$\tr(\bH')=\oplus_{i=0}^{d-1} R^*_{\d^i}(\bH)^\Gamma_{\good}=R^*(\bH')_{\good}.$$ 
\end{proof}
}

\section{Twisted elliptic conjugacy classes in the finite Weyl
  group}\label{sec:4}

In this section, we discuss the (twisted) conjugacy classes of finite Coxeter groups. These results will be used in the rest of this paper. In this section, we fix a finite irreducible Coxeter group $(W, I)$ and a group automorphism $\d: W \rightarrow W$ with $\d(I)=I$. Let $d$ be the minimal positive integer such that $\d^d(i)=i$ for all $i \in I$.  

\subsection{Twisted conjugacy classes} For $w \in W$, set $\supp_{\d}(w)=\cup_{n=0}^{d-1} \d^n \supp(w)$. Then $\supp_{\d}(w)$ is a $\d$-stable subset of $I$. 

We define the {\it $\d$-twisted conjugation action} of $W$ on itself by $w \cdot_\d w'=w w' \d(w) \i$. Any orbit is called a {\it $\d$-twisted conjugacy class} of $W$. A $\d$-conjugacy class $\co$ of $W$ is called {\it elliptic} if $\co \cap W_J=\varnothing$ for all proper $\d$-stable subset $J$ of $I$, i.e., $\supp_\d(w)=I$ for all $w \in \co$. An element $w \in W$ is called {\it $\d$-elliptic} if it is contained in an elliptic $\d$-conjugacy class of $W$. 

Recall that $V$ is the vector space spanned by $\a_i$ (for $i \in I$). As
before, we regard $W$ as a subgroup of $GL(V)$ and $\d$ as an element in $GL(V)$ in the natural way. For $w \in W$, set $$p_{w, \d}(q)=\det(q \cdot id_V-w \d).$$ Then it is easy to see that $p_{w, \d}(q)=p_{w', \d}(q)$ if $w$ is $\d$-conjugate to $w'$.

We have the following well-known result for elliptic conjugacy classes. We include the proof here for completeness. 

\begin{proposition}
Let $\co$ be a $\d$-twisted conjugacy class of $W$. The following are equivalent:

(1) $\co$ is elliptic; 

(2) $p_{w, \d}(1) \neq 0$ for some (or equivalently, any) $w \in \co$;

(3) For some (or equivalently, any) $w \in \co$, there is no nonzero point in $V$ that is fixed by $w \d$. 
\end{proposition}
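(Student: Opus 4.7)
The plan is to dispense with the equivalence $(2)\Leftrightarrow(3)$ first as a direct consequence of $p_{w,\d}(1)=\det(\mathrm{id}_V-w\d)$: this determinant vanishes precisely when $w\d$ admits a nonzero fixed vector. The independence from the choice of $w\in\co$ in (3) will follow from the identity $w'\d=u(w\d)u^{-1}$ in $W'=W\rtimes\<\d\>$, valid for $w'=uw\d(u)^{-1}$, which realizes the fixed space of $w'\d$ as the $u$-image of that of $w\d$. This reduces the proposition to proving $(1)\Leftrightarrow(3)$, which I will do by contrapositive in each direction.

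For $(1)\Rightarrow(3)$: Assume $\co$ is not elliptic and choose $w\in\co\cap W_J$ for some proper $\d$-stable $J\subsetneq I$. Using the decomposition $V=V_J\oplus V^{W_J}$---preserved by both $w$ (acting trivially on the second summand) and $\d$ (since $\d(J)=J$)---I would show that $\d$ has a nonzero fixed vector in $V^{W_J}$. For this, the projection $V^{W_J}\hookrightarrow V\twoheadrightarrow V/V_J$ is a $\d$-equivariant isomorphism, and on $V/V_J$ the map $\d$ permutes the basis $\{\overline{\a_i}\}_{i\in I\setminus J}$; since $I\setminus J$ is nonempty, the sum over any $\d$-orbit produces a nonzero $\d$-fixed vector. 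Pulling this back to $V^{W_J}$ yields $v$ with $w\d(v)=w(v)=v$.

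For $(3)\Rightarrow(1)$: Assume $w\d(v)=v$ for some nonzero $v$. Since $w\d$ is defined over $\bR$, I may take $v\in V_0$. Pick $u\in W$ with $v':=u(v)\in\overline{C}$, the closed fundamental chamber of $W$ acting on $V_0$, and put $w':=uw\d(u)^{-1}$, which is $\d$-conjugate to $w$; a short computation in $W'$ gives $w'\d=u(w\d)u^{-1}$, whence $w'\d(v')=v'$. The key trick is that $\d$ preserves $\overline{C}$ (since $\d$ permutes the simple roots), so $\d(v')\in\overline{C}$; but simultaneously $\d(v')=(w')^{-1}(v')\in W\cdot v'$. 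By uniqueness of the $W$-orbit representative in $\overline{C}$, we get $\d(v')=v'$ and hence $w'(v')=v'$. Consequently $w'\in W_{v'}=W_J$ for $J=\{i\in I:(v',\a_i^\vee)=0\}$. I would then verify that $J$ is $\d$-stable (using $\d(\a_i^\vee)=\a_{\d(i)}^\vee$ together with $\d(v')=v'$) and proper (if $J=I$, then $v'$ pairs to zero with every simple coroot, forcing $v'=0$, a contradiction). Since $w$ is $\d$-conjugate to $w'\in W_J$, the class $\co$ is not elliptic.

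The main obstacle I anticipate is the $(3)\Rightarrow(1)$ direction, specifically ensuring that the parabolic $W_J$ into which one $\d$-conjugates $w$ is $\d$-stable. The elegant resolution is to place $v$ in the $\d$-stable chamber $\overline{C}$ via a suitable $u\in W$; uniqueness of the orbit representative then forces $\d(v')=v'$ outright, after which $\d$-stability of the associated parabolic is automatic. The remaining checks are routine manipulations with the perfect pairing $V_0\times V_0^\vee\to\bR$ and the multiplication in $W'$.
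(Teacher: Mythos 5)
Your proposal is correct, and it differs from the paper's argument in one substantive way. The equivalence of (2) and (3) via $p_{w,\d}(1)=\det(\mathrm{id}_V-w\d)$, together with the conjugation-invariance of the fixed space, matches the trivial part of the paper's proof, and your ``fixed vector $\Rightarrow$ not elliptic'' direction is essentially the paper's proof of (1)$\Rightarrow$(3): pass to a real fixed vector, conjugate it into the closed dominant chamber, deduce $\d(v')=v'$, and conclude that $w'$ lies in the proper $\d$-stable parabolic stabilizing $v'$; you invoke uniqueness of the dominant representative of a $W$-orbit where the paper instead pairs $\d(\bar v)-\bar v$ against $\rho^\vee$, but these are interchangeable standard facts. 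The genuine difference is how the cycle of implications is closed: the paper cites \cite[Lemma 7.2]{He07} for (2)$\Rightarrow$(1), whereas you prove ``not elliptic $\Rightarrow$ nonzero fixed vector'' directly, using the decomposition $V=V_J\oplus V^{W_J}$ and the observation that $\d$ permutes the basis $\{\bar\a_i\}_{i\in I\setminus J}$ of $V/V_J\cong V^{W_J}$, so that a $\d$-orbit sum pulls back to a nonzero vector fixed by $w\d$ for any $w\in W_J$; this makes your argument self-contained and elementary where the paper relies on an external reference. One cosmetic remark: your two paragraphs are labelled ``(1)$\Rightarrow$(3)'' and ``(3)$\Rightarrow$(1)'' but in fact prove the contrapositives of (3)$\Rightarrow$(1) and (1)$\Rightarrow$(3), respectively; since both implications are established, the equivalence stands, but the labels should be swapped.
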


\begin{proof}
(2)$\Rightarrow$(1) is proved in \cite[Lemma 7.2]{He07}. 

(3)$\Rightarrow$(2) is obvious. 

(1)$\Rightarrow$(3): Let $w \in \co$ and $v \neq 0$ with $w \d(v)=v$. Let $x \in W$ with $x(v)$ dominant. Set $\bar v=x(v)$ and $w'=x \i w \d(x)$. Then $w' \in \co$ and $w' \d(\bar v)=\bar v$. Since $\bar v$ is dominant, $\d(\bar v)$ is also dominant. Hence $\d(\bar v)-\bar v=\d(\bar v)-w' \d(\bar v)$ is a linear combination of $\a_i$ with nonnegative coefficients. As $(\d(\bar v), \rho^\vee)=(\bar v, \rho^\vee)$, we must have that $\d(\bar v)=\bar v$ and $w' \d(\bar v)=\d(\bar v)$. Hence $w'$ is generated by $s_\a$, where $\a$ runs over simple roots in $V$ such that $(\bar v, \a^\vee)=0$. In particular, $w'$ is in a proper $\d$-stable parabolic subgroup of $W$. Thus $\co$ is not elliptic. 
\end{proof}

\subsection{Minimal length elements} We follow the notation in \cite[section 3.2]{GP}.

Given $w, w' \in W$ and $i \in I$, we write $w
\xrightarrow{s_i}_{\d} w'$ if $w'=s_i w \d(s_i)$ and $\ell(w') \le
\ell(w)$. If $w=w_0, w_1, \cdots, w_n=w'$ is a sequence of elements in
$W$ such that for all $k$, we have $w_{k-1} \xrightarrow{s_j}_{\d}
w_k$ for some $j \in I$, then we write $w \rightarrow_{\d} w'$. If $w \rightarrow_{\d} w'$ and $w' \rightarrow_{\d} w$, then we say that $w$ and $w'$ are in the same $\d$-cyclic shift class and write $w \approx_{\d} w'$. 

For $w \in W$ and $i \in I$, define the length function $\ell_i(w)$ as the number of generators in $I$ conjugate to $s_i$ occurring in a reduced expression of $w$. By \cite[Exercise 1.15]{GP}, it is independent of the choice of reduced expression of $w$. 

Set $$l_{i, \d}(w)=\sum_{k=0}^{d-1} l_{\d^k(i)} (w).$$ Then it is easy to see that if $w \approx_{\d} w'$, then $l_{i, \d}(w)=l_{i, \d}(w')$ for all $i \in I$.

\

We have the following main result on elliptic conjugacy classes of $W$. 

\begin{theorem}\label{min} Let $\co$ be an elliptic $\d$-twisted conjugacy class in $W$ and $\co_{\min}$ be the set of minimal length elements in $\co$. Then

(1) For each $w \in \co$, there exists $w' \in \co_{\min}$ such that
$w \rightarrow_{\d} w'$.

(2) Let $w, w' \in \co_{\min}$, then $w \approx_{\d} w'$. In particular, $l_{i, \d}(w)=l_{i, \d}(w')$ for all $i \in I$.

(3) Let $\co'$ be an elliptic $\d$-conjugacy classes of $W$. Let $w \in \co_{\min}$ and $w' \in \co'_{\min}$. Then $\co=\co'$ if and only if $p_{w, \d}(q)=p_{w', \d}(q)$ and $l_{i, \d}(w)=l_{i, \d}(w')$ for all $i \in I$.
\end{theorem}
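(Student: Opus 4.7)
The plan is to follow the Geck--Pfeiffer framework for minimal length elements \cite{GP}, extended to the twisted setting (see \cite{He07}). The three parts have different flavors: (1) is a length-reduction argument, (2) is a structural claim requiring case-by-case verification, and (3) is a classification result that follows once (1) and (2) are established.

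For part (1), I proceed by induction on $\ell(w) - \ell_{\min}(\co)$, where $\ell_{\min}(\co) = \min\{\ell(x) : x \in \co\}$. If $\ell(w) = \ell_{\min}(\co)$, there is nothing to prove. Otherwise, the key lemma is: there exist a sequence of length-preserving $\d$-cyclic shifts $w \approx_\d w_n$ and some $j \in I$ such that $\ell(s_j w_n \d(s_j)) < \ell(w_n)$. Granting this, the inductive hypothesis applied to $s_j w_n \d(s_j) \in \co$ produces the desired $w' \in \co_{\min}$, and concatenating the shifts gives $w \rightarrow_\d w'$. The lemma itself is proved by analyzing which simple reflections can be ``rotated through'' $w$ via the $\d$-twisted braid relations, paralleling Proposition 3.2.7 of \cite{GP}. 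Ellipticity of $\co$ enters to guarantee that the length-reduction procedure does not stall inside a proper $\d$-stable parabolic.

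For part (2), the invariance $l_{i,\d}(w) = l_{i,\d}(w')$ under $\approx_\d$ is already noted in the paper, so it suffices to show that $\co_{\min}$ forms a single $\approx_\d$-class. I would reduce to irreducible pairs $(W, \d)$ and analyze each type separately. In classical types $A$, $B$, $D$ with their possible outer automorphisms, minimal length representatives can be described combinatorially via (signed) cycle types, and any two such representatives are interchanged by explicit length-preserving $\d$-cyclic shifts. For exceptional types (types $E_6, E_7, E_8, F_4, G_2$ and dihedral groups), the verification reduces to a finite case check, carried out in \cite{GP} for $\d = 1$ and extended in \cite{He07} for nontrivial $\d$.

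For part (3), the forward implication is immediate since both $p_{w,\d}(q)$ and $l_{i,\d}(w)$ depend only on the $\d$-conjugacy class of $w$ (the first by the observation made just before the proposition on elliptic classes, the second because $l_{i,\d}$ sums over the $\d$-orbit of $i$). For the converse, by parts (1) and (2) one may assume $w, w'$ are both of minimal length in their respective elliptic classes, and all minimal length elements of a given class share the same invariants. Appealing to the classification of elliptic $\d$-conjugacy classes in irreducible finite Coxeter groups, one checks case-by-case that the pair $\bigl(p_{w,\d}(q), (l_{i,\d}(w))_{i \in I}\bigr)$ separates distinct elliptic classes. The main obstacle is part (2): establishing that all minimal length elements in an elliptic $\d$-class are strongly connected via length-preserving cyclic shifts requires substantial type-specific analysis, particularly in the twisted setting where the combinatorial structure is more intricate than in the classical Geck--Pfeiffer setting.
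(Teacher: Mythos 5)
Your proposal is correct in outline and takes essentially the same route as the paper, which in fact gives no proof of Theorem \ref{min} at all: the remark following it simply cites \cite[Theorem 3.2.7]{GP} for the untwisted case, \cite[Theorem 7.5]{He07} for the twisted case, and \cite{HN} for a case-free proof of (1) and (2), and your sketch reproduces exactly the strategy of those cited works while deferring the same substantial type-by-type verifications (for (2) and especially (3)) to them. One small correction: ellipticity is not needed for part (1), which holds for arbitrary $\d$-twisted conjugacy classes; only parts (2) and (3) are special to elliptic classes, and the paper itself notes that no case-free argument for (3) is currently known.
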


\begin{remark} It was first prove via a case-by-case analysis for untwisted case by Geck and Pfeiffer in \cite[Theorem 3.2.7]{GP} and for twisted case by the second-named author in \cite[Theorem 7.5]{He07}. A case-free proof for part (1) and (2) was found recently in \cite{HN}. It would be interesting to find a case-free proof for part (3) and/or Theorem \ref{neverfuse} below. 
\end{remark}

\

The following result can be checked easily from the list of Dynkin diagrams. 

\begin{lemma}\label{wjd}
Let $J \subset I$ with $\d(J)=J$. Then we may write $J$ as $J=J_1 \sqcup J_2$ with $\d(J_i)=J_i$ for $i=1, 2$ and 

(1) $J_1$ is a union of connected components of type $A$;

(2) For any connected component $K$ of $J_1$, either $\d \mid_K$ is identity or there exists another connected component $K' \neq K$ of $J_1$ with $\d(K)=K'$;

(3) Either (i) $J_2=\emptyset$ or (ii) $J_2$ is a connected component of $J$ not of type $A$ or (iii) $J_2$ is a connected component of $J$ of type $A$ and $\d \mid_{J_2}$ is nontrivial. 
\end{lemma}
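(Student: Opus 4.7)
The plan is a direct case-by-case inspection using the classification of irreducible finite Coxeter diagrams $(W,I)$ together with their diagram automorphisms. Let $K_1,\dots,K_r$ be the connected components of $J$. Since $\delta$ preserves both $J$ and the edges of the diagram, it permutes $\{K_1,\dots,K_r\}$, and distinct components are disjoint as subsets of $I$. Call a component $K$ \emph{special} if $\delta(K)=K$ and either $K$ is not of type $A$ or $\delta|_K$ is a nontrivial automorphism of $K$. The lemma will follow from two claims:
\begin{enumerate}
\item[(a)] any $K$ with $\delta(K)\neq K$ is of type $A$;
\item[(b)] at most one component of $J$ is special.
\end{enumerate}
Granting (a) and (b), I would define $J_2$ to be the unique special component if one exists, and $J_2=\emptyset$ otherwise, and set $J_1=J\setminus J_2$. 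Then the components of $J_1$ are either components from $\delta$-orbits of size $\ge 2$ (type $A$ by (a), paired by $\delta$) or $\delta$-fixed components of type $A$ with $\delta|_K=\mathrm{id}$. This gives (1) and (2), and (3) is the definition of $J_2$.

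For claim (a), the hypothesis $\delta(K)\neq K$ combined with the disjointness of distinct components of $J$ forces $K\cap\delta(K)=\emptyset$. I would then check, for each irreducible $(W,I)$ admitting a nontrivial $\delta$, that every connected subdiagram $K\subset I$ with $K\cap\delta(K)=\emptyset$ is of type $A$. The admissible cases are $A_n$ (flip), $D_n$ with $n\ge 5$ (swap), $D_4$ (triality), $E_6$ (flip), $F_4$ (swap), and $I_2(m)$ (flip). In each case the disjointness condition restricts $K$ to an ``outer half'' of the diagram where only type-$A$ subdiagrams occur; the list of admissible $K$'s is short and directly enumerated.

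For claim (b), I would exhibit for each $(W,I,\delta)$ a distinguished ``center'' of $I$ (a node or an edge) that every special component is forced to contain, so that the disjointness of distinct components of $J$ precludes the existence of two special components. Concretely: the central node or central edge for $A_n$ with flip; the branch node $\alpha_{n-2}$ for $D_n$ with swap; the branch $\alpha_2$ for $D_4$ under any nontrivial triality; the branch $\alpha_4$ for $E_6$ with flip; the central double-bond edge $\{\alpha_2,\alpha_3\}$ for $F_4$; and the whole diagram for $I_2(m)$ with $m\neq 3$. In each case the $\delta$-symmetry together with the connectedness of $K$ force $K$ to straddle this center.

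The argument is conceptually routine; the only real obstacle is organizing the bookkeeping so that each admissible pair $(W,I,\delta)$ is treated, with care taken for the $F_4$ case (where $I^\delta=\emptyset$ but the diagram itself is not of type $A$) and for the several nontrivial elements of the triality group for $D_4$.
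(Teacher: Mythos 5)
Your proposal is correct and is essentially the paper's approach: the paper offers no argument beyond ``can be checked easily from the list of Dynkin diagrams,'' and your claims (a) and (b), proved by inspecting the classified irreducible diagrams with their automorphisms and noting that every ``special'' component must contain a distinguished center, are just an organized version of that same inspection. One small point: your concrete list of centers for claim (b) should also cover the pairs where $\delta$ acts trivially and the remaining irreducible types ($B_n$, $E_7$, $E_8$, $G_2$, $H_3$, $H_4$), where the same argument works with the center taken to be the unique branch node or unique multiple bond, since any connected subdiagram not of type $A$ must contain it.
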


\

We have the following consequence that elliptic classes never fuse. 

\begin{theorem}\label{neverfuse}
Let $J \subset I$ with $\d(J)=J$. Let $C$ be a $\d$-twisted conjugacy class of $W$ such that $C \cap W_J$ contains a $\d$-elliptic element of $W_J$. Then $C \cap W_J$ is a single $\d$-twisted conjugacy class of $W_J$. 
\end{theorem}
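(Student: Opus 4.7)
Let $w,w'\in C\cap W_J$ with $w$ being $\d$-elliptic in $W_J$. The plan is first to show that all elements of $C\cap W_J$ are $\d$-elliptic in $W_J$, then to reduce to minimum-length representatives, and finally to apply Theorem \ref{min}(3) inside $W_J$.

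I would begin by noting that since $w,w'$ lie in the same $W$-$\d$-conjugacy class, the elements $w\d$ and $w'\d$ of $W\rtimes\<\d\>$ are conjugate in $W\rtimes\<\d\>$, so their fixed subspaces on $V$ have equal dimension. Using the decomposition $V=V_J\oplus V^{W_J}$, noting that $w,w'\in W_J$ act trivially on $V^{W_J}$ and that $\d(J)=J$ preserves both summands, I can split
\[
\text{Fix}_V(w\d)=\text{Fix}_{V_J}(w\d)\oplus\text{Fix}_{V^{W_J}}(\d),
\]
and similarly for $w'\d$. The $\d$-ellipticity of $w$ in $W_J$ forces $\text{Fix}_{V_J}(w\d)=0$, and equating the two total dimensions yields $\text{Fix}_{V_J}(w'\d)=0$ as well; hence $w'$ is also $\d$-elliptic in $W_J$. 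Next, by Theorem \ref{min}(1) applied inside $W_J$, I reduce $w$ and $w'$ to minimum-length representatives $u,u'\in W_J$ of their $W_J$-$\d$-conjugacy classes $C_1$ and $C_2$, both contained in $C\cap W_J$ and both $\d$-elliptic in $W_J$, so that $\supp_\d(u)=\supp_\d(u')=J$.

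The heart of the argument is to promote $u,u'$ from $W_J$-minima to $W$-minima of $C$. I would argue as follows: apply Theorem \ref{min}(1) in $W$ to obtain a chain $u=u_0\xrightarrow{s_{j_1}}_\d u_1\xrightarrow{s_{j_2}}_\d\cdots\xrightarrow{s_{j_n}}_\d u_n=v$ with $v$ of minimum $W$-length in $C$. The key observation is that since $W_J$ preserves $R^+\setminus R_J$, for $x\in W_J$ and $j\notin J$ one has $\ell(s_j x)=\ell(x)+1$, so no strictly length-reducing cyclic shift of an element of $W_J$ can use an $s_j$ outside $W_J$. Combined with the structural information in Lemma \ref{wjd} on $\d$-stable subsets $J$, this lets one arrange the chain to remain inside $W_J$ at each step, so $v\in W_J$ and $\ell(v)\le\ell(u)$. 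But $u$ is $W_J$-minimum in $C_1\subseteq C\cap W_J$, and any $W_J$-class intersecting $C$ gives an element of $C\cap W_J$, so combined with the $W_J$-minimality of $u$ one concludes $\ell(v)=\ell(u)$; that is, $u$ is also a $W$-minimum of $C$. The same holds for $u'$.

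Once $u$ and $u'$ are both $W$-minima of $C$, Theorem \ref{min}(2) in $W$ gives $u\approx_\d u'$ in $W$, and in particular $\ell_{i,\d}(u)=\ell_{i,\d}(u')$ for every $i\in I$, hence for every $i\in J$. For the characteristic polynomials, the action of $w\d$ on $V$ preserves $V_J\oplus V^{W_J}$, acts on $V^{W_J}$ simply as $\d|_{V^{W_J}}$ (independent of which element of $W_J$), and $w,w'$ being $W$-$\d$-conjugate have equal $p_{\cdot,\d}(q)$ on $V$; thus $p_{u,\d}(q)$ and $p_{u',\d}(q)$ coincide when restricted to the action on $V_J$. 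Applying Theorem \ref{min}(3) inside $W_J$ (valid because both $u,u'$ are $\d$-elliptic $W_J$-minima) yields $C_1=C_2$, proving that $C\cap W_J$ is a single $W_J$-$\d$-conjugacy class. The principal obstacle is the second paragraph: showing that the $W$-reduction chain can be kept inside $W_J$. The monotonicity of length under cyclic shifts from $W_J$ forces strict descents to use simple reflections in $J$, but length-preserving moves can a priori leave $W_J$; resolving this cleanly relies on the elliptic hypothesis together with the explicit structure of $\d$-stable parabolics afforded by Lemma \ref{wjd}.
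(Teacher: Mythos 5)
Your first paragraph is fine: the fixed-space computation showing every element of $C\cap W_J$ is $\d$-elliptic in $W_J$, and the matching of characteristic polynomials on $V_J$, are both correct and are part of any proof of this statement (the paper itself only sketches the argument, saying it follows Geck--Pfeiffer's Theorem 3.2.11 using Theorem \ref{min}(3) and Lemma \ref{wjd}). But the rest of your outline has two genuine gaps. First, the ``heart'' step is not proved: your observation only rules out \emph{strictly} length-decreasing moves $x\mapsto s_jx\d(s_j)$ with $j\notin J$ applied to $x\in W_J$; the chain furnished by Theorem \ref{min}(1) may contain length-\emph{preserving} moves, a single one of which can leave $W_J$, after which later descents are unconstrained. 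You acknowledge this, but the fix you gesture at is exactly the nontrivial content that would have to be supplied; as written, the claim that a $W_J$-minimal $\d$-elliptic element is of minimal length in its $W$-class $C$ is asserted, not proved. Second, even granting that step, your appeal to Theorem \ref{min}(2) ``in $W$'' is illegitimate: Theorem \ref{min} is stated only for \emph{elliptic} classes, and $C$ is in general not elliptic in $W$ (it is elliptic only in $W_J$). For non-elliptic classes the conclusion of \ref{min}(2) genuinely fails --- e.g.\ in $S_3$ the two simple reflections are both minimal in the class of transpositions but are not $\approx$-related --- so the equality of $\ell_{i,\d}$-data you want cannot be quoted; establishing it for minimal elements of arbitrary classes is essentially intertwined with the very statement you are proving.

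There is also a mismatch in the final application. Theorem \ref{min}(3) is stated for an irreducible $(W,I)$, and when applied inside $W_J$ (componentwise) it requires the lengths $\ell_{i,\d}$ computed with generators classified up to conjugacy \emph{in $W_J$}. The data you would obtain from $W$ is strictly coarser: generators lying in different type-$A$ components of $J$ are $W$-conjugate but not $W_J$-conjugate, so equality of the $W$-data does not give equality of the data \ref{min}(3) needs. This is precisely where Lemma \ref{wjd} must be used substantively: it guarantees $J=J_1\sqcup J_2$ with $J_1$ a union of (untwisted, or pairwise permuted) type-$A$ components --- on which there is a unique $\d$-elliptic twisted class, so no length data is needed there --- and at most one remaining irreducible piece $J_2$, to which \ref{min}(3) (with its intrinsic $\ell_{i,\d}$-data, pinned down by the characteristic polynomial bookkeeping) is applied. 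Your proposal cites Lemma \ref{wjd} only vaguely, and for a different purpose. So while your outline shares the spirit of the intended Geck--Pfeiffer-style argument, the promotion-to-$W$-minima step, the use of \ref{min}(2) for a non-elliptic class, and the coarse-versus-fine $\ell$-data issue are real gaps rather than omitted routine details.
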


\begin{remark}
The untwisted case was due to Geck and Pfeiffer in \cite[Theorem 3.2.11]{GP}. The general case can be proved in a similar way by using Theorem \ref{min} (3) and Lemma \ref{wjd}. We omit the details. 
\end{remark}

\

In the rest of this section, we discuss some further properties on elliptic conjugacy classes of a parabolic subgroup of $W$. 

\begin{proposition}\label{j-j'}
Let $C$ be a $\d$-twisted conjugacy class of $W$. Let $J$ be a minimal $\d$-stable subset of $I$ with $C \cap W_J \neq \emptyset$. Let $w \in C \cap W_J$ and $J' \subset I$ with $\d(J')=J'$. Then $x w \d(x) \i \in W_{J'}$ if and only if $x=x' x_1$ for some $x' \in W_{J'}$ and $x_1 \in W^{\d} \cap {}^{J'} W^J$ such that $x_1(J) \subset J'$. 
\end{proposition}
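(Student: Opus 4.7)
The plan is as follows.

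The sufficiency direction is immediate: if $x=x'x_1$ with $\d(x_1)=x_1$ and $x_1(J)\subset J'$, then
$$xw\d(x)^{-1}=x'\bigl(x_1 w x_1^{-1}\bigr)\d(x')^{-1},$$
and $x_1 w x_1^{-1}\in W_{x_1(J)}\subset W_{J'}$ while $x',\d(x')\in W_{J'}$ since $\d(J')=J'$.

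For the necessity direction, I would first reduce to the case $x\in{}^{J'}W^J$. Decompose $x=u\,x_1\,v$ with $u\in W_{J'}$, $x_1\in{}^{J'}W^J$ the minimal length double coset representative, and $v\in W_J$. Then
$$xw\d(x)^{-1}=u\,x_1(vw\d(v)^{-1})\d(x_1)^{-1}\d(u)^{-1},$$
and since $u,\d(u)\in W_{J'}$, the hypothesis is equivalent to $x_1 w_1\d(x_1)^{-1}\in W_{J'}$, where $w_1:=vw\d(v)^{-1}\in W_J$. By minimality of $J$, $w\in C\cap W_J$ must be $\d$-elliptic in $W_J$ (otherwise $C$ would meet a strictly smaller $\d$-stable parabolic, using Theorem \ref{neverfuse}); since $w_1$ is $\d$-conjugate to $w$ inside $W_J$, $w_1$ is also $\d$-elliptic in $W_J$. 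Setting $x'=u$, it remains to show that $x_1\in W^\d$ and $x_1(J)\subset J'$.

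The key step is to prove $x_1(J)\subset J'$. Let $y:=x_1 w_1\d(x_1)^{-1}\in W_{J'}$; then $y\in C\cap W_{J'}$, and in $W\rtimes\langle\d\rangle$ we have the conjugacy $y\d=x_1(w_1\d)x_1^{-1}$. Let $\tilde J=\supp_\d(y)\subset J'$. By minimality of $J$ we have $|\tilde J|\ge|J|$, and $y$ is $\d$-elliptic in $W_{\tilde J}$. Using the conjugacy relation on the reflection representation, together with the fact that $w_1\d$ has no fixed vectors on $V_J$ and acts as $\d$ on $V^{W_J}$ (and analogously for $y\d$), one transports the fixed space data and, combined with $x_1\in{}^{J'}W^J$ (which forces $x_1(\alpha)>0$ for $\alpha\in J$ and $x_1^{-1}(\beta)>0$ for $\beta\in J'$), concludes that $x_1(V_J)\subset V_{J'}$, hence $x_1(R_J)\subset R_{J'}$. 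The minimality property of $x_1$ then upgrades this to $x_1(J)\subset J'$: if $x_1(\alpha)\in R_{J'}^+$ were a sum of at least two simple roots of $J'$, applying $x_1^{-1}$ (which is positive on $J'$) would express a simple root $\alpha$ as a sum of at least two positive roots, a contradiction. This geometric/minimality argument is the main obstacle; it leans on Theorems \ref{min} and \ref{neverfuse} and on the structural Lemma \ref{wjd} to rule out the twisted cross-fusion of supports.

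Finally, $x_1\in W^\d$ is a consequence of the previous step: once $x_1(J)\subset J'$, we have $x_1 w_1 x_1^{-1}\in W_{J'}$, so
$$x_1\d(x_1)^{-1}=(x_1 w_1 x_1^{-1})^{-1}\cdot y\in W_{J'}.$$
But $x_1,\d(x_1)\in{}^{J'}W$ (the latter because $\d(J')=J'$ implies $\d({}^{J'}W^J)={}^{J'}W^J$), so they are both minimal length representatives of their respective left $W_{J'}$-cosets; their quotient lying in $W_{J'}$ forces $x_1=\d(x_1)$ by uniqueness. This completes the decomposition $x=x'x_1$ with all the required properties.
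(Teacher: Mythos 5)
Your sufficiency argument, the reduction to $x=u\,x_1\,v$ with $x_1\in{}^{J'}W^J$, the ``upgrade'' from $x_1(R_J)\subset R_{J'}$ to $x_1(J)\subset J'$, and the final deduction of $x_1=\d(x_1)$ \emph{from} $x_1(J)\subset J'$ are all correct. The genuine gap is exactly the step you yourself flag as ``the main obstacle'': the claim $x_1(V_J)\subset V_{J'}$ is never proved, and the fixed-space transport you invoke does not deliver it in the twisted case. Since $w_1$ is $\d$-elliptic in $W_J$, the data attached to $w_1\d$ on $V=V_J\oplus V^{W_J}$ are $V^{w_1\d}=(V^{W_J})^{\d}$ and $\im(1-w_1\d)=V_J\oplus(1-\d)V^{W_J}$; conjugating by $x_1$ and using $y=x_1w_1\d(x_1)\i\in W_{J'}$ (with $\tilde J=\supp_\d(y)$) only gives $x_1\bigl(V_J\oplus(1-\d)V^{W_J}\bigr)=V_{\tilde J}\oplus(1-\d)V^{W_{\tilde J}}$, hence $x_1(V_J)\subset V_{\tilde J}\oplus(1-\d)V^{W_{\tilde J}}$ --- not $x_1(V_J)\subset V_{J'}$. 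Killing the $(1-\d)V^{W_{\tilde J}}$-component of each root $x_1(\al)$, $\al\in J$, is precisely the twisted difficulty, and no argument is offered for it; the references to Theorems \ref{min}, \ref{neverfuse} and Lemma \ref{wjd} are not turned into a proof. (The untwisted version of your argument, $\im(1-w_1)=V_J$ so $x_1(V_J)=\im(1-y)\subset V_{J'}$, is fine, but it is exactly what breaks when $\d\neq 1$.) A further small slip: your ``by minimality of $J$ we have $|\tilde J|\ge|J|$'' reads minimality as minimality of cardinality, whereas the hypothesis is inclusion-minimality; in any case that inequality is never used.

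The paper's proof avoids all of this by establishing $x_1\in W^\d$ \emph{first}, by pure double-coset combinatorics, and only then deducing $x_1(J)\subset J'$. From $x_1w'\d(x_1)\i\in W_{J'}$ one gets $x_1w'\in x_1W_J\cap W_{J'}\d(x_1)$, so the double cosets $W_{J'}x_1W_J$ and $W_{J'}\d(x_1)W_J$ meet and hence coincide, and uniqueness of the minimal representative in ${}^{J'}W^J$ (note $\d(x_1)\in{}^{J'}W^J$ because $\d(J)=J$, $\d(J')=J'$) forces $x_1=\d(x_1)\in W^\d$. Then $w'\in W_J\cap x_1\i W_{J'}x_1=W_{J\cap x_1\i(J')}$, and $J\cap x_1\i(J')$ is $\d$-stable (this uses $x_1\in W^\d$) and meets $C$, so inclusion-minimality of $J$ gives $J\cap x_1\i(J')=J$, i.e.\ $x_1(J)\subset J'$, and $x\in W_{J'}x_1$ follows. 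If you reverse the order of your two steps along these lines, your outline closes; as written, the central implication is missing.
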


\begin{remark}
It is easy to see that $J$ is a minimal $\d$-stable subset of $I$ with $C \cap W_J \neq \emptyset$ if and only if $C \cap W_J$ contains a $\d$-elliptic element of $W_J$. 
\end{remark}

\begin{proof}
If $x=x' x_1$ for some $x' \in W_{J'}$ and $x_1 \in W^{\d} \cap {}^{J'} W^J$ such that $x(J) \subset J'$, then $x w \d(x) \i=x' (x_1 w x_1 \i) \d(x') \i \in W_{J'}$. 

Suppose that $x w \d(x) \i \in W_{J'}$. We write $x$ as $x=y' x_1 y$, where $y \in W_J$, $y' \in W_{J'}$ and $x_1 \in {}^{J'} W^J$. Set $w'=y w \d(y) \i \in C\cap W_J$. Then $x_1 w' \d(x_1) \i \in W_{J'}$. Hence $x_1 w' \in x_1 W_J \subset W_{J'} x_1 W_J$ and $x_1 w' \in W_{J'} \d(x_1) \subset W_{J'} \d(x_1) W_J$. Since $x_1, \d(x_1) \in {}^{J'} W^J$, we must have that $x_1=\d(x_1) \in W^{\d}$. 

Now $w' \in W_J \cap x_1 \i W_{J'} x_1=W_{J \cap x_1 \i(J')}$. By our assumption on $J$, $J \cap x_1 \i(J')=J$. So $x_1(J) \subset J'$. Hence $x \in W_{J'} x_1$. 
\end{proof}

\begin{proposition}\label{center-w}
Let $J \subset I$ with $\d(J)=J$ and $w$ be a $\d$-elliptic element in $W_J$. Set $Z_{W, \d}(w)=\{x \in W; x w \d(x) \i=w\}$ and $N_{W, \d}(W_J)=\{x \in W; x W_J \d(x) \i=W_J\}$. Then $$W_J Z_{W, \d}(w)=N_{W, \d}(W_J)=W_J Z W_J,$$ where $Z=\{z \in {}^J W^J; z=\d(z), J=z(J)\}$.  
\end{proposition}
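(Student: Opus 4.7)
The plan is to prove the two equalities
\[
W_J Z_{W,\d}(w) \;=\; N_{W,\d}(W_J) \;=\; W_J Z W_J
\]
by exploiting Proposition \ref{j-j'} with $J'=J$ (whose hypothesis is satisfied, since by the remark after that proposition, ``$w$ is $\d$-elliptic in $W_J$'' is exactly the condition that $J$ is a minimal $\d$-stable subset of $I$ meeting $C$, where $C$ is the $\d$-conjugacy class of $w$ in $W$) together with Theorem \ref{neverfuse} (elliptic classes never fuse).

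First, I would verify the easy inclusion $W_J Z W_J \subseteq N_{W,\d}(W_J)$: for $z \in Z$, we have $\d(z)=z$ and $z(J)=J$, so $z W_J \d(z)^{-1}=zW_J z^{-1}=W_{z(J)}=W_J$. A direct computation then shows that for any $y_1,y_2 \in W_J$,
\[
(y_1 z y_2)\,W_J\,\d(y_1 z y_2)^{-1} \;=\; y_1\, z\, W_J\, z^{-1}\, \d(y_1)^{-1} \;=\; W_J,
\]
so $W_J Z W_J \subseteq N_{W,\d}(W_J)$. For the reverse inclusion, take $x \in N_{W,\d}(W_J)$; then $xw\d(x)^{-1} \in W_J \cap C$, so by Proposition \ref{j-j'} we may write $x=x' x_1$ with $x'\in W_J$ and $x_1 \in W^\d \cap {}^{J}W^J$ satisfying $x_1(J)\subseteq J$. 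The key step is to upgrade this to equality: since $x_1 \in W^\d$, the subset $x_1(J)$ is $\d$-stable, and $x_1 w x_1^{-1}=x_1 w \d(x_1)^{-1} \in C \cap W_{x_1(J)}$, so the minimality of $J$ forces $x_1(J)=J$, i.e. $x_1 \in Z$. Hence $x \in W_J Z \subseteq W_J Z W_J$, proving $N_{W,\d}(W_J)=W_J Z W_J$ (and incidentally $W_J Z = W_J Z W_J$).

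Finally, for $N_{W,\d}(W_J)=W_J Z_{W,\d}(w)$, the inclusion $\supseteq$ is almost free: if $z \in Z_{W,\d}(w)$, then $z w \d(z)^{-1}=w \in W_J$ and the same application of Proposition \ref{j-j'} places $z \in W_J Z \subseteq N_{W,\d}(W_J)$, so $W_J Z_{W,\d}(w) \subseteq N_{W,\d}(W_J)$. For $\subseteq$, given $x\in N_{W,\d}(W_J)$, the element $x w \d(x)^{-1}$ lies in $C \cap W_J$; by Theorem \ref{neverfuse} this set is a single $\d$-twisted conjugacy class of $W_J$, so there exists $y \in W_J$ with $x w \d(x)^{-1}=y w \d(y)^{-1}$, whence $y^{-1}x \in Z_{W,\d}(w)$ and $x \in W_J Z_{W,\d}(w)$. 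The only genuinely subtle point is the minimality-plus-$\d$-stability argument that turns the inclusion $x_1(J)\subseteq J$ coming from Proposition \ref{j-j'} into an equality, which is where the ellipticity hypothesis on $w$ is essential; everything else is formal manipulation.
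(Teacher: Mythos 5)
Your proof is correct and takes essentially the same route as the paper: Theorem \ref{neverfuse} gives $N_{W,\d}(W_J)\subseteq W_J Z_{W,\d}(w)$, and Proposition \ref{j-j'} with $J'=J$ gives the containments involving $W_J Z W_J$. The only cosmetic difference is your use of $\d$-stability and minimality of $J$ to upgrade $x_1(J)\subseteq J$ to equality, where a cardinality count (as $x_1$ is injective and $|x_1(J)|=|J|$) already suffices.
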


\begin{proof}Let $C$ be the $\d$-twisted conjugacy class of $W$ that contains $w$ and $C_J$ be the $\d$-twisted conjugacy class of $W_J$ that contains $w$. Let $x \in N_{W, \d}(W_J)$. By Theorem \ref{neverfuse}, $x w \d(x) \i \in C \cap W_J=C_J$. Therefore $x w \d(x) \i=y w \d(y) \i$ for some $y \in W_J$. Hence $y \i x \in Z_{W, \d}(w)$ and $x \in W_J Z_{W, \d}(w)$. 

On the other hand, if $x \in W_J Z_{W, \d}(w)$, then $x w \d(x) \i \in
C_J$. By Proposition \ref{j-j'}, $x \in W_J Z W_J \subset N_{W,
  \d}(W_J)$. 

\end{proof}

%We write $x$ as $x=y_1 z y_2$, where $y_1, y_2 \in W_J$ and $z \in {}^J W^J$. Then $(z y_2) w (z y_2) \i \in z C_J \d(z) \i \cap C_J$. In particular, $z W_J \d(z) \i \cap W_J \neq \emptyset$ and $W_J z W_J \cap W_J \d(z) W_J \neq \emptyset$. Since $z \in {}^J W^J$, $\d(z) \in {}^J W^J$. Thus $z=\d(z)$. So $$\emptyset \neq z C_J \d(z) \i \cap C_J \subset C_J \cap (W_J \cap z W_J z \i)=C_J \cap W_{J \cap z(J)}.$$ Notice that $C_J$ is an elliptic conjugacy class of $W_J$. Thus $J \cap z(J)=J$ and $z \in Z$. So $x \in W_J Z W_J \subset N_{W, \d}(W_J)$. \qed

\section{Spanning set of (twisted) cocenter}\label{sec:6}

\subsection{Spanning set of $\bH_0/[\bH_0, \bH_0]_\d$}\label{span-set setting} Let $\mathcal I^\d=\{J \subset I; J=\d(J)\}$. For $J, J' \in \mathcal I^\d$, we write $J \sim_\d J'$ if there exists $w \in W^\d \cap {}^{J'} W^J$ such that $w(J)=J'$. For each $\sim_\d$-equivalence class in $\mathcal I^\d$, we choose a representative. Such representatives form a subset of $\mathcal I^\d$, which we denote by $\mathcal I^\d_0$. Then there is a natural bijection between $\mathcal I^\d_0$ and $\mathcal I^\d/\sim_\d$. 

For any $J \subset \mathcal I^\d_0$, we choose a basis $\{f_{J, i}\}$ of $S(V^{W_J \rtimes \<\d\>})^{N_{W, \d}(W_J)}$. 

For each $\d$-twisted conjugacy class $C$ of $W$, we fix a minimal element $J_C \in \mathcal I^\d_0$ such that $C \cap W_{J_C} \neq \emptyset$. Such $J_C$ is uniquely determined by $C$.  We fix an element $w_C$ in $C \cap W_{J_C}$. By definition, $w_C$ is a $\d$-elliptic element in $W_{J_C}$. 

\begin{proposition}\label{p:spank=0} We keep the notations as in $\S$\ref{span-set setting}. Then $\{w_C f_{J_C, i}\}$ spans $\bH_0/[\bH_0, \bH_0]_\d$ as a vector space. 
\end{proposition}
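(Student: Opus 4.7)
The plan is to decompose $\bH_0=\bC[W]\otimes S(V)$ as $\bigoplus_{v\in W}v\cdot S(V)$ and group these sectors by $\delta$-twisted conjugacy class $C$ of $W$, giving $\bH_0=\bigoplus_C M_C$ with $M_C=\bigoplus_{v\in C}v\cdot S(V)$. A direct computation of $[y,vf]_\delta$ in $\bH_0$, using the commutation $fw=w\cdot w^{-1}(f)$ (valid since $k=0$), yields the reduction identity
$$vf\equiv(y\cdot_\delta v)\cdot\delta(y)(f)\pmod{[\bH_0,\bH_0]_\delta},\qquad y\in W.$$
Since $y\cdot_\delta v$ sweeps out $C$ as $y$ varies, I choose $y$ with $y\cdot_\delta v=w_C$ and reduce every $vf$ to an element of the $w_C$-sector $w_C\cdot S(V)$. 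The same computation also shows that $[\bH_0,\bH_0]_\delta$ is compatible with the sectoring, so it suffices to work one $C$ at a time.

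Inside the $w_C$-sector I extract two further families of relations. First, computing $[g,w_Cf]_\delta$ for $g\in S(V)$ and simplifying gives
$$[g,w_Cf]_\delta=w_C\cdot f\cdot\bigl(w_C^{-1}(g)-\delta(g)\bigr),$$
so varying $f,g$ yields $w_C\cdot J_{w_C}\subseteq[\bH_0,\bH_0]_\delta$, where $J_{w_C}\subseteq S(V)$ is the ideal generated by the linear subspace $(w_C^{-1}-\delta)(V)\subseteq V$. Second, specializing the reduction identity to $y\in Z_{W,\delta}(w_C)$ gives $w_C\cdot f\equiv w_C\cdot\delta(y)(f)$. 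Together, the image of $M_C$ in $\bH_0/[\bH_0,\bH_0]_\delta$ is a quotient of the coinvariants $w_C\cdot\bigl(S(V)/J_{w_C}\bigr)_{\delta(Z_{W,\delta}(w_C))}$.

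I then identify $S(V)/J_{w_C}\cong S(V^{W_{J_C}\rtimes\langle\delta\rangle})$ via the natural restriction. Because $\delta(J_C)=J_C$ and $w_C\in W_{J_C}$, the decomposition $V=V_{J_C}\oplus V^{W_{J_C}}$ is both $w_C$- and $\delta$-stable. The $\delta$-ellipticity of $w_C$ in $W_{J_C}$ means that $w_C\delta$ has no nonzero fixed vector on $V_{J_C}$, so $w_C^{-1}-\delta$ is invertible there; meanwhile on $V^{W_{J_C}}$ we have $w_C=1$ and $w_C^{-1}-\delta=1-\delta$ with kernel $(V^{W_{J_C}})^\delta=V^{W_{J_C}\rtimes\langle\delta\rangle}$. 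This yields the direct sum $V=V^{W_{J_C}\rtimes\langle\delta\rangle}\oplus(w_C^{-1}-\delta)(V)$ and the desired ring isomorphism. By Proposition \ref{center-w}, $N_{W,\delta}(W_{J_C})=W_{J_C}\cdot Z_{W,\delta}(w_C)$, whence also $N_{W,\delta}(W_{J_C})=W_{J_C}\cdot\delta(Z_{W,\delta}(w_C))$; since $W_{J_C}$ acts trivially on $V^{W_{J_C}\rtimes\langle\delta\rangle}$, the $\delta(Z_{W,\delta}(w_C))$-invariants on this subspace coincide with the $N_{W,\delta}(W_{J_C})$-invariants. Passing from coinvariants to invariants by finite-group averaging over $\bC$, the image of the $w_C$-sector in the cocenter is spanned by $\{w_C\cdot f_{J_C,i}\}$, proving the proposition.

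The main obstacle is the direct-sum decomposition $V=V^{W_{J_C}\rtimes\langle\delta\rangle}\oplus(w_C^{-1}-\delta)(V)$: this is the precise point at which the $\delta$-ellipticity of $w_C$ in $W_{J_C}$ and the minimality of $J_C$ become indispensable, since without this concrete decomposition the natural restriction $S(V)\to S(V^{W_{J_C}\rtimes\langle\delta\rangle})$ would not coincide with the quotient by $J_{w_C}$ (the two target rings would be only abstractly isomorphic). A secondary technical point is the verification, via Proposition \ref{center-w} and the triviality of the $W_{J_C}$-action on $V^{W_{J_C}\rtimes\langle\delta\rangle}$, that $\delta(Z_{W,\delta}(w_C))$ and $N_{W,\delta}(W_{J_C})$ yield the same invariants on $V^{W_{J_C}\rtimes\langle\delta\rangle}$.
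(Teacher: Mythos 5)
Your proposal is correct and follows essentially the same route as the paper: reduce each twisted-conjugacy sector to $w_C\,S(V)$ via the relation $vf\equiv (y\cdot_\delta v)\,\delta(y)(f)$, kill the ideal generated by $(w_C^{-1}-\delta)(V)$ (which coincides with the paper's complement $U'=V_{J_C}\oplus(1-\delta)V^{W_{J_C}}$, using $\delta$-ellipticity of $w_C$ in $W_{J_C}$), and then average over $\delta(Z_{W,\delta}(w_C))$ together with Proposition \ref{center-w} to land in $w_C\,S(V^{W_{J_C}\rtimes\langle\delta\rangle})^{N_{W,\delta}(W_{J_C})}$. The only cosmetic difference is your packaging of the commutator relations as the ideal $J_{w_C}$ and the sector decomposition of $[\bH_0,\bH_0]_\delta$, which the paper does not need for the spanning statement.
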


\begin{proof}
Notice that for any $x, y \in W$ and $f \in S(V)$, $$x y \d(x) \i f \equiv y \d(x) \i f \d(x)=y \d(x)\i(f) \mod [\bH_0, \bH_0]_\d.$$ 

Thus $\bar \bH_0$ is spanned by $w_C S(V)$, where $C$ runs over $\d$-twisted conjugacy classes of $W$. 

Now we fix a $\d$-twisted conjugacy class $C$. Let $J=J_C$. We have that $V=V^{W_J} \oplus U$, where $U$ is the subspace of $V$ spanned by simple roots in $J$. This is a decomposition of $V$ as $W_J \rtimes \<\d\>$ submodules. Since $w$ is $\d$-elliptic in $W_J$, $(1-w_C \d)$ is invertible on $U$. Also $V^{W_J}=V^{W_J \rtimes \<\d\>} \oplus (1-\d) V^{W^J}$. As $(1-\d) V^{W_J}$ is a direct sum of eigenspaces of $\d$ with eigenvalues not equal to $1$, $(1-w_C \d)$ is also invertible on $(1-\d) V^{W^J}$. Set $U'=U \oplus (1-\d) V^{W^J}$. Then $(1-w_C \d)$ is invertible on $U'$ and $V^{w_C \d}=V^{W_J \rtimes \<\d\>}$. 

Let $f \in S(V)$ and $u \in U'$. There exists $v \in U'$ such that $v-w_C \d(v)=u$. Now \begin{align*} u w_C f &=v w_c f-\bigl(w_C \d(v) \bigr) w_C f=v w_c f-w_C f \d(v) \\ &=[v, w_C f]_\d \in [\bH_0, \bH_0]_\d. \end{align*}

Hence $U' w_C S(V) \in [\bH_0, \bH_0]_\d$. 

Since $V=V^{W_J \rtimes \<\d\>} \oplus U'$, we have that \begin{align*} w_C S(V) &=w_C S(U') S(V^{W_J \rtimes \<\d\>})=S(U') w_C S(V^{W_J \rtimes \<\d\>}) \\ & \subset w_C S(V^{W_J \rtimes \<\d\>})+[\bH_0, \bH_0]_\d.\end{align*}

Let $f \in S(V^{W_J \rtimes \<\d\>})$. Then for any $x \in Z_{W, \d}(w_C)$, $$w_C f \equiv x w_C f \d(x) \i=(x w_C \d(x) \i) [\d(x)(f)]=w_C \d(x)(f) \mod [\bH_0, \bH_0]_\d.$$ Hence $$w_C f \equiv \frac{1}{|Z_{W, \d}(w_C)|} \sum_{x \in Z_{W, \d}(w_C)} w_C \d(x)(f) \in w_C S(V^{W_J \rtimes \<\d\>})^{Z_{W, \d}(\d(w_C))} \mod [\bH_0, \bH_0]_\d.$$ Here the last inclusion follows from the fact that $\d(Z_{W, \d}(w_C))=Z_{W, \d}(\d(w_C))$. 

Since $w_C$ is $\d$-elliptic in $W_J$, $\d(w_C)$ is also $\d$-elliptic in $W_J$. By Proposition \ref{center-w}, $W_J Z_{W, \d}(\d(w_C))=N_{W, \d}(W_J)$ and $$S(V^{W_J \rtimes \<\d\>})^{Z_{W, \d}(\d(w_C))}=S(V^{W_J \rtimes \<\d\>})^{W_J Z_{W, \d}(\d(w_C))}=S(V^{W_J \rtimes \<\d\>})^{N_{W, \d}(W_J)}.$$ 

Therefore $\{w_C f_{J, i}\}$ spans the image of $w_C S(V)$ in $\bH_0/[\bH_0, \bH_0]_\d$. 
\end{proof}

\subsection{Spanning set of $\bar \bH_\d$} Let $\tilde \bH$ be the graded algebra associated to the
filtration of $\bH$ given by the degree of $S(V)$ defined in section
\ref{sec:1.4}. Recall that $\tilde \bH
\cong \bH_0$. By induction on degree one shows that if
$\mathcal L$ is a spanning set of $\tilde \bH/[\tilde \bH, \tilde
\bH]_{\d^i}$, then $\mathcal L$ is also a spanning set of $\bH/[\bH,
\bH]_{\d^i}$. To see this, first notice that (using the relations in
$\bH$) the $\delta^i$-commutators preserve the filtration, i.e., $$[\C
F^l\bH,\C F^j\bH]\subset \C F^{l+j}\bH.$$
More precisely, if $a_1$ and $a_2$ are elements of $S(V)$ homogeneous
of degrees $l$ and $j$, respectively, then 
\begin{equation}\label{e:commute}
[w_1\cdot a_1,w_2\cdot a_2]_{\delta^i}\in (w_1w_2\cdot
w_2^{-1}(a_1)a_2-w_2\delta^{i}(w_1) \cdot
\delta^i(a_1)\delta^{i}(w_1^{-1})(a_2))+\C F^{l+j-1}\bH.
\end{equation}
Let $\bar h\in \bH/[\bH,\bH]_{\delta^i}$ be
given and assume that $h\in \C F^l\bH.$ Since $h$ can be written as $h=\sum_w w a_w$ for
some $a_w\in S(V),$ set $h_0=\sum_w w a_w\in \bH_0$. Then
$h_0-\sum_{\bar x\in \C L} c_x x_0\in [\bH_0,\bH_0]_{\delta^i}$, where
only finitely many $c_x$ are nonzero, $x_0$ is a representative of $\bar
x$ in $\bH_0$. Moreover, we may choose the $x_0$'s that contribute to
the sum to have (maximal) degree less than or equal to $i$. Thus
$h_0=\sum_{\bar x} c _x x_0+\sum_j[y_{0,j},y'_{0,j}]$, for some
$y_{0,j},y'_{0,j}$ in $\bH_0$ homogeneous and $[y_{0,j},y'_{0,j}]\in
\C F^l\bH_0$. 

Let $y_j,y_j'\in \bH$ be the corresponding elements for $y_{0,j},y'_{0,j}$,
respectively.  By (\ref{e:commute}), $[y_j,y_j']_{\delta^i}$ differs
from 
$[y_{0,j},y'_{0,j}]_{\delta^i}$ by an element in $\C F^{l-1}\bH$. Then $h-\sum_{\bar x} c _x
x-\sum_j[y_{j},y'_{j}]\in \C F^{l-1}\bH,$ and the claim follows by
induction. (Here $x$ denotes a representative of $\bar x$ in $\bH$
corresponding to $x_0$.)

Now we have the following result. 

\begin{proposition}\label{p:span}
We keep the notations as in $\S$\ref{span-set setting}. Then $\{w_C f_{J_C, i}\}$ spans $\bar \bH_\d$ as a vector space. 
\end{proposition}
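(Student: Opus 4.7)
The plan is to deduce the proposition directly from Proposition \ref{p:spank=0} (the spanning result for $k=0$) together with the filtration argument laid out in the paragraph immediately preceding the statement. The elements $w_C f_{J_C,i}$ lie in $\bC[W]\otimes S(V)$, which is canonically identified as a vector space with both $\bH$ and $\bH_0$; hence the same symbolic expressions can be interpreted in either algebra, and we are free to view $\{w_C f_{J_C,i}\}$ simultaneously as a subset of each.

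The base case is immediate: Proposition \ref{p:spank=0} furnishes $\{w_C f_{J_C,i}\}$ as a spanning set for $\bH_0/[\bH_0,\bH_0]_\d$, which under the identification $\tilde\bH \cong \bH_0$ is exactly the cocenter of the associated graded.

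Next, I would invoke the lifting argument from the preceding paragraph, specialized to $i=1$. The essential computation there (formula (\ref{e:commute})) shows that a $\d$-commutator $[w_1 a_1, w_2 a_2]_\d$ in $\bH$ differs from its analog in $\bH_0$ by elements of strictly smaller filtration degree. Given $h \in \C F^l \bH$, one writes $h = \sum_w w a_w$, forms the corresponding element $h_0 \in \bH_0$, expresses $\bar h_0$ in terms of the spanning set using the base case, and then lifts the resulting relation from $\bH_0$ to $\bH$. The lift differs from $h$ modulo $[\bH,\bH]_\d$ by an element in $\C F^{l-1}\bH$, and descending induction on the filtration degree $l$ expresses $h$ modulo $[\bH,\bH]_\d$ as a linear combination of the elements $w_C f_{J_C,i}$.

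Since the real substance of the argument (namely, Proposition \ref{p:spank=0} together with the behavior of $\d$-commutators under the degree filtration) is already in place, the main thing to check is simply that the filtration lifting procedure specializes cleanly to $i=1$; but this is exactly the content of the paragraph preceding the proposition, stated there for arbitrary $\d^i$. Thus no new obstacle arises, and the proposition follows as an immediate corollary of the preparatory material.
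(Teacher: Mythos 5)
Your proposal is correct and follows essentially the same route as the paper: the paper's proof of this proposition is precisely the combination of Proposition \ref{p:spank=0} with the filtration-transfer argument of the preceding paragraph (using $\tilde\bH\cong\bH_0$, formula (\ref{e:commute}), and induction on the degree filtration), which is exactly what you describe. No gap; the specialization to $i=1$ is immediate as you note.
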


\

Now we give the trace formula of the element from the spanning set on the induced representations. 

\begin{proposition}\label{trace-formula}
Let $J, J'$ be $\d$-stable subsets of $I$. Let $w$ be an $\d$-elliptic element in $W_J$ and $C$ be the $\d$-twisted conjugacy class of $W$ that contains $w$. Let $M$ be an $\bH'_{J'}$-module. Then for any $f \in S(V^{W_J \rtimes \<\d\>})^{N_{W, \d}(W_J)}$, \[\Tr(w f \d, \Ind^{\bH'}_{\bH'_{J'}} M)=\begin{cases} |N_{W, \d}(W_J)/W_J| \Tr(w f \d, M), & \text{ if } J=J'; \\ 0, & \text{ if } C \cap W_{J'}=\emptyset. \end{cases}\]
\end{proposition}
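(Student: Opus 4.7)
\medskip

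\noindent\textbf{Proof plan.} The strategy is to reduce the trace on $\Ind^{\bH'}_{\bH'_{J'}} M$ to a trace on $M$ by a Frobenius-type adjunction, and to evaluate the resulting element of $\bH'_{J'}$ modulo $[\bH'_{J'},\bH'_{J'}]$. Since $\bH'=\bigoplus_{x\in W^{J'}} x\cdot \bH'_{J'}$ as a free right $\bH'_{J'}$-module (and hence $\Ind^{\bH'}_{\bH'_{J'}} M=\bigoplus_{x\in W^{J'}} x\otimes M$), the argument of Lemma \ref{l:adjointtrace} carried out verbatim in $\bH'$ yields
\[
\Tr(wf\d,\Ind^{\bH'}_{\bH'_{J'}} M)=\sum_{x\in W^{J'}}\tr(a_{x,x},M),
\]
where the matrix coefficients $a_{x,y}\in\bH'_{J'}$ are defined by $wf\d\cdot x=\sum_{y\in W^{J'}} y\cdot a_{x,y}$. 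Our task becomes the computation of $\sum_x a_{x,x}$ modulo $[\bH'_{J'},\bH'_{J'}]$.

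Using $\d\cdot x=\d(x)\cdot\d$ together with the commutation $f\cdot\d(x)=\d(x)\cdot\d(x)\i(f)+\sum_{z<\d(x)} z\cdot f_z$ (where $\deg f_z<\deg f$), I would expand
\[
wf\d\cdot x\,=\,w\d(x)\cdot\d(x)\i(f)\cdot\d\,+\,\sum_{z<\d(x)} wz\cdot f_z\cdot\d,
\]
and for each $w'\in W$ apply the unique minimal-length factorization $w'=x\cdot u$ with $x\in W^{J'}$, $u\in W_{J'}$ in order to read off the $y=x$ component. This splits $a_{x,x}$ as a \emph{principal term}, equal to $u_x\cdot\d(x)\i(f)\cdot\d$ when $w\d(x)=x\cdot u_x$ with $u_x\in W_{J'}$ (equivalently $x\i w\d(x)\in W_{J'}$) and zero otherwise, plus lower $S(V)$-degree \emph{correction terms}. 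The principal sum is then evaluated directly. In the case $C\cap W_{J'}=\emptyset$, no $x$ contributes and the principal sum vanishes. In the case $J=J'$, Proposition \ref{j-j'} together with Proposition \ref{center-w} identifies the set of contributing $x\in W^J$ with the $|N_{W,\d}(W_J)/W_J|$ cosets of $N_{W,\d}(W_J)/W_J$; Theorem \ref{neverfuse} gives $u_x=y_x w\d(y_x)\i$ for some $y_x\in W_J$; and since $\d(x)\in N_{W,\d}(W_J)$ and $f$ is $N_{W,\d}(W_J)$-invariant, $\d(x)\i(f)=f$. Cycling $y_x$ modulo $[\bH'_J,\bH'_J]$ and using that $f\in S(V^{W_J})$ is central in $\bH_J$ (so $f\,\d(y_x)=\d(y_x)\,f$ exactly), one obtains $u_x f\d\equiv wf\d$, and the principal sum equals $|N_{W,\d}(W_J)/W_J|\cdot wf\d$ modulo $[\bH'_J,\bH'_J]$.

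The main obstacle is to show that the correction sum $\sum_x a_{x,x}^{\mathrm{corr}}$ lies in $[\bH'_{J'},\bH'_{J'}]$. My plan here is induction on $n=\deg f$. The base $n=0$ has no corrections at all. For the inductive step I would exploit the $S(V)$-degree filtration $\C F^\bullet\bH'$ of section \ref{sec:1.4} and its associated graded $\bH'_0$: the matrix-coefficient computation above, performed in $\bH'_0$ (where the commutation relation has no correction terms), already produces exactly the principal term analyzed above, so that $\sum_x a_{x,x}$ and its $\bH'_0$-analog differ by an element lying in $\C F^{n-1}\bH'_{J'}$. A careful comparison, using that correction terms only arise when $f$ is moved past simple reflections outside $J$ (as $\Delta_\a(f)=0$ for $\a\in J$ by $W_J$-invariance of $f$), shows that this difference lies in $[\bH'_{J'},\bH'_{J'}]+\C F^{n-1}\bH'_{J'}$; applying the inductive hypothesis to the $\C F^{n-1}$-piece closes both cases simultaneously.
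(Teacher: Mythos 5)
Your overall architecture is the same as the paper's: decompose $\Ind^{\bH'}_{\bH'_{J'}}M=\bigoplus_{x\in W^{J'}}xM$, compute the diagonal contribution of $wf\delta$ block by block, identify the contributing $x$ (those with $x\i w\delta(x)\in W_{J'}$) via Propositions \ref{j-j'}, \ref{center-w} and Theorem \ref{neverfuse}, and use $N_{W,\delta}(W_J)$-invariance of $f$ plus conjugation inside $\bH'_J$ to see each contributing block gives $\Tr(wf\delta,M)$. That part of your argument is sound and matches the paper.

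The genuine gap is your treatment of the lower-order terms coming from the commutation relation. You concede that $a_{x,x}$ may contain ``correction terms'' of lower $S(V)$-degree and then try to kill $\sum_x a_{x,x}^{\mathrm{corr}}$ by induction on $n=\deg f$ through the filtration $\C F^\bullet\bH'$. This induction does not close: the inductive hypothesis is the Proposition itself for invariant $f$ of smaller degree attached to the fixed elliptic $w$, whereas what your comparison with $\bH'_0$ leaves you with is an arbitrary element of $\C F^{n-1}\bH'_{J'}$ (the corrections are of the form $u\,f_z\,\delta$ with $u\in W_{J'}$ arbitrary and $f_z\in S(V)$ arbitrary of lower degree, not of the constrained shape $w f'\delta$ with $f'$ invariant). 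General elements of $\C F^{n-1}\bH'_{J'}$ are neither in $[\bH'_{J'},\bH'_{J'}]$ nor have controlled traces, so ``applying the inductive hypothesis to the $\C F^{n-1}$-piece'' is not available, and the claim that the correction sum lies in the commutator space is left unproved; without it even the $C\cap W_{J'}=\emptyset$ case is incomplete. The paper's proof avoids this entirely by a short direct argument: write $x=x_1x_2$ with $x_1\in W_J$, $x_2\in{}^JW^{J'}$, commute $f$ \emph{exactly} past $\delta(x_1)\in W_J$ (possible since $s_\al(f)=f$ for $\al\in J$ and $\delta(J)=J$), so the only corrections are $w\delta(x_1)\,y\,f_y$ with $y<\delta(x_2)$; if such a term hit the block $xM$ one would have $y\in W_Jx_2W_{J'}$, forcing $\ell(y)\ge\ell(x_2)=\ell(\delta(x_2))>\ell(y)$, a contradiction. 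Hence the corrections land entirely in off-diagonal blocks and contribute nothing to the trace --- there is no correction sum to dispose of. Replacing your filtration induction by this length/Bruhat-order argument (which requires the $x=x_1x_2$ splitting; commuting $f$ past all of $\delta(x)$ at once does not give it) repairs the proof.
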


\begin{proof}
The space $\Ind^{\bH'}_{\bH'_{J'}} M$ is the direct sum of $x M$, $x \in W^{J'}$. 

Let $x \in W^{J'}$ and $m \in M$. We write $x$ as $x=x_1 x_2$, where $x_1 \in W_J$ and $x_2 \in {}^J W^{J'}$. Then \[w f \d(x m)=w f\d(x_1) \d(x_2) m=w \d(x_1) f \d(x_2) m.\] Notice that $f \d(x_2)=\d(x_2) [\d(x_2) \i(f)]+\sum_{y<\d(x_2)} y f_y=\d(x_2) [\d(x) \i(f)]+\sum_{y<\d(x_2)} y f_y$ for some $f_y \in S(V)$. We have that \[w f \d(x m)=w \d(x) [\d(x) \i(f)] m+\sum_{y<\d(x_2)} w \d(x_1) y f_y m.\]

We show that 

(a) For any $y<\d(x_2)$, $w \d(x_1) y f_y m \in \sum_{x' \in W^{J'}, x' \neq x} x' M$. 

If (a) is not true, then $w \d(x_1) y \in x W_{J'}$. Hence $y \in (w \d(x_1)) \i x W_{J'} \subset W_J x_2 W_{J'}$. Since $x_2 \in {}^J W^{J'}$, $y \ge x_2$. Thus $\ell(y) \ge \ell(x_2)=\ell(\d(x_2))>\ell(y)$. That is a contradiction. Hence (a) is proved. 

It is easy to see that $w \d(x) [\d(x) \i(f)] m \in \sum_{x' \in W^{J'}, x' \neq x} x' M$ if and only if $w \d(x) \in x W_{J'}$, i.e., $x \i w \d(x) \in W_{J'}$. 

In particular, if $C \cap W_{J'}=\emptyset$, then $x \i w \d(x) \notin W_{J'}$ for any $x$ and thus $\Tr(w f \d, \Ind^{\bH'}_{\bH'_{J'}} M)=0$. 

If $J'=J$, then by Proposition \ref{center-w}, for any $x \in W^J$, $x \i w \d(x) \in W_J$ if and only if $x \in {}^J W^J \cap W^\d$ and $x(J)=J$. For such $x$, $\d(x) \i(f)=f$. The number of elements $x$ satisfy such condition equals $|N_{W, \d}(W_J)/W_J|$. 

Now \[\Tr(w f \d, \Ind^{\bH'}_{\bH'_J} M)=\sum_{x \in {}^J W^J \cap W^\d, x(J)=J} \Tr(x \i w f \d x, M).\] We have that $x \i w f \d x=x \i w x  f \d$. By Theorem \ref{neverfuse}, there exists $x' \in W_J$ such that $x \i w x=x' w \d(x') \i$. Then $x \i w f \d x=x' w \d(x') \i f \d=x' (w f \d) (x') \i$. So $\Tr(x \i w f \d x, M)=\Tr(x' w f \d (x') \i, M)=\Tr(w f \d, M)$. Thus $$\Tr(w f \d, \Ind^{\bH'}_{\bH'_J} M)=|N_{W, \d}(W_J)/W_J| \Tr(w f \d, M).$$ 
\end{proof}

\section{Density theorem and basis theorem}\label{sec:7}

\subsection{Tempered modules} Let $\res_{W'}: R(\bH')\to R(W')$ denote the
linear map given by the restriction of $\bH'$-modules to $\bC[W'].$
Define
\begin{equation}
\C T_\rcc=\{\pi: \pi\text{ tempered $\bH'$-module with real central
  character}\}.
\end{equation}

\begin{theorem}\label{t:W-ind} Suppose the parameter function $k$ of $\bH$ is real-valued.
\begin{enumerate}
\item[(a)]The set $\res_{W'}\C T_\rcc$ is a $\bQ$-basis of $R(W')_\bQ.$ 

\item[(b)]If $\bH'$ has parameter
  function of geometric type in the sense of \cite{L2}, then $\res_{W'}\C
  T_\rcc$ is in fact a  $\bZ$-basis of $R(W')$ and the change of bases
  matrix between $\res_{W'}\C T_\rcc$ and $\Irr W'$ is upper
  uni-triangular in an appropriate ordering. 
\end{enumerate}
\end{theorem}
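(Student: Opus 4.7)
The plan for (a) is to filter both sides by parabolic induction, use compatibility of $\res_{W'}$ with these filtrations, and reduce the problem to a bijection on the elliptic quotients; for (b) I would invoke Kazhdan--Lusztig theory and Springer correspondence.

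\medskip

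First I would classify $\C T_\rcc$. By the tempered version of Theorem \ref{t:Langlands} combined with Clifford theory (Theorem \ref{t:irrH'}), every tempered $\bH'$-module with real central character arises as a summand of $\Ind_{\bH'_J}^{\bH'}(\sigma \rtimes U)$ where $J\subset I$ is stable under a power of $\delta$, $\sigma$ is an elliptic tempered $\bH^\sem_J$-module with real central character, $U$ is a representation of the corresponding stabilizer in $\Gamma$, and the Langlands parameter $\nu\in(V^\vee)^{W_J}$ is forced to vanish by the reality condition. This partitions $\C T_\rcc$ according to $W'$-orbits of $\delta$-stable parabolics. The same parabolic induction defines a filtration of $R(W')_\bQ$ whose associated graded at $J$ is the elliptic quotient $\overline R_0(W'_J)_\bQ$, which has dimension equal to the number of $\Gamma$-twisted elliptic conjugacy classes in $W_J$.

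\medskip

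Next I would use compatibility of restriction with parabolic induction, namely
\[
\res_{W'}\circ \Ind_{\bH'_J}^{\bH'} = \Ind_{W'_J}^{W'}\circ \res_{W'_J}.
\]
Thus $\res_{W'}$ preserves the two filtrations and the question reduces to showing that for each $\delta$-stable $J$ the restrictions of the elliptic tempered $\bH^\sem_J$-modules with real central character form a $\bQ$-basis of the elliptic quotient of $R(W'_J)_\bQ$. Dimensions match on both sides by Theorem \ref{t:3} (applied to parabolic subalgebras) together with the analogous count for $k\equiv 0$ which yields $\dim\overline R_0^\delta(\bH^\sem_J)=\dim\overline R_0^\delta(W_J)_\bQ$. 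For linear independence I would invoke Theorem \ref{t:explicit-basis} of section \ref{sec:9}: the explicit elliptic basis constructed there can be read off (by inspection of the case-by-case list) from restrictions of tempered modules with real central character. The hypothesis that $k$ is real is used here via Opdam's unitarity theorem for tempered modules (cited in the remark after Lemma \ref{l:finite-elliptic}) to guarantee that the elliptic tempered modules with real central character do exhaust a basis of the elliptic quotient.

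\medskip

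For part (b), under geometric parameters the Kazhdan--Lusztig classification parameterizes tempered $\bH$-modules with real central character by pairs $(e,\psi)$ with $e$ nilpotent in the Langlands dual and $\psi\in\widehat{A(e)}$ (together with cuspidal supports in the unequal-parameter case); the Springer correspondence yields a bijection with $\Irr W$. The upper uni-triangularity uses the standard fact that the $W$-restriction of the tempered module attached to $(e,\psi)$ is the Springer representation $\rho_{(e,\psi)}$ plus a $\bZ_{\ge 0}$-combination of Springer representations $\rho_{(e',\psi')}$ with $e'$ in the boundary of the closure of the orbit of $e$; this gives an order (equivalently, the $a$-function order) on $\Irr W$ making the change of basis matrix upper uni-triangular. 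The extension from $\bH$ to $\bH' = \bH\rtimes\langle\delta\rangle$ is then obtained by Clifford theory together with the $\delta$-equivariant Springer correspondence.

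\medskip

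The hard part is the elliptic step in Step~2: one has to know that the explicit basis of section \ref{sec:9} is visibly realized by (restrictions of) tempered modules with real central character. The case-by-case nature of Theorem \ref{t:explicit-basis} is unavoidable with the methods of the paper, and a uniform argument would almost certainly require the deeper analytic input (e.g.\ Opdam's Plancherel theorem) that the authors are trying to avoid.
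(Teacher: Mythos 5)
Your proposal is not following the paper: the paper does not prove Theorem \ref{t:W-ind} at all, but quotes part (a) from Solleveld's homological-algebra result \cite[Theorem 6.5]{So} and part (b) from Lusztig's geometric classification \cite{L2} together with Clifford theory as in \cite[3.4-3.6]{BC}. So your sketch has to stand on its own, and as written it has a genuine gap exactly at the point you call ``the elliptic step''. The structural claims you make in Step 1 --- that every tempered $\bH'$-module with real central character is a summand of a module induced from an \emph{elliptic tempered} module of a parabolic with induction parameter $\nu=0$, and that the elliptic tempered modules with real central character span the elliptic quotient, are linearly independent there, and number exactly the (twisted) elliptic conjugacy classes --- are the actual content of the theorem, and none of them follows from Theorem \ref{t:Langlands}, from Lemma \ref{l:temp-repr} (which only produces \emph{one} tempered elliptic module per elliptic central character), or from Opdam's unitarity of tempered modules. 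These are Plancherel-type statements; for arbitrary real (possibly zero or negative) parameters they are precisely the analytic/homological input that Solleveld's Euler--Poincar\'e argument supplies, so your reduction assumes what is to be proved. (Using Theorem \ref{t:3} is not circular, since it rests only on Proposition \ref{p:span} and section \ref{sec:9}, but matching dimensions does not exclude the possibility of \emph{more} tempered modules with real central character than elliptic classes, with linearly dependent restrictions --- in which case $\res_{W'}\C T_\rcc$ would fail to be a basis.)

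The specific fix you propose also fails: the explicit modules of Theorem \ref{t:explicit-basis} are in general \emph{not} tempered and not even hermitian --- the type $A$ lifts $\pi^A_{n,k}(\sigma)$ are explicitly described as non-hermitian, and the one-$W$-type modules with $\wti\om\mapsto 0$ (e.g.\ $\phi_{1,0}$ in $G_2$ or $F_4$) are of spherical/anti-tempered type. Hence they do not belong to $\C T_\rcc$, and their linear independence in $\overline R^\delta_0(\bH)$ gives no information about restrictions of tempered modules with real central character; ``reading off'' the tempered basis from the section \ref{sec:9} list is not possible without a further (nontrivial) change-of-basis argument in the elliptic space. There is also a smaller glossed point in your filtration step: the associated graded of $R(W')_\bQ$ at $J$ is the space of $N_{W,\delta}(W_J)/W_J$-coinvariants of the elliptic quotient, not the elliptic quotient itself, so the counting must be done modulo the normalizer action. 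Your part (b) is in substance the paper's citation: the uni-triangularity of the restriction matrix via lowest $W$-types/Springer correspondence (with cuspidal supports in the unequal geometric-parameter cases) is a theorem of \cite{L2} and \cite{BC}, not something your sketch re-derives; presenting it as a citation, as the paper does, is fine, but it is not an independent proof.
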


Part (a) of Theorem \ref{t:W-ind} is proved by homological algebra in
\cite[Theorem 6.5]{So}. Part (b) follows from
\cite{L2} together with Clifford theory, see \cite[3.4-3.6]{BC}. In fact, (b) is now known to hold for
all real parameters $k$ and all root systems except some cases in type
$F_4$ (where it is again expected to be true).

\subsection{Basis Theorem}\label{sec:7.2}
We retain the notation from section \ref{span-set setting}. In particular, let
$\{w_C f_{J_C,i}\}$ be the spanning set of $\bar\bH_\delta$ from
Proposition \ref{p:span}, indexed by conjugacy classes $C$ in $W$ and
for every $C$, a basis $\{f_{J_C,i}\}$ of $S(V^{W_{J_C}\rtimes\<\delta\>})^{N_{W,\delta}(W_{J_C})}.$

\begin{theorem}[Basis Theorem]\label{t:basis}
The set $\{w_C f_{J_C,i}\}$ forms a basis of $\bar\bH_\delta.$
\end{theorem}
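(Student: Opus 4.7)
The plan is to establish linear independence, since Proposition \ref{p:span} already gives spanning. I would assume for contradiction a relation $h = \sum_{C,i} a_{C,i}\, w_C f_{J_C,i} \in [\bH,\bH]_\d$ with some $a_{C,i}\neq 0$, choose a class $C_0$ with $a_{C_0,i}\neq 0$ for some $i$ and $|J_{C_0}|$ minimal, and set $J^* := J_{C_0}$. The strategy is to detect the coefficients supported on $J^*$ by pairing $h$ against traces of suitable induced $\bH'$-modules, using Proposition \ref{trace-formula}.

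The first step is a combinatorial observation: for every class $C$ with $J_C\neq J^*$ (as elements of $\mathcal I^\d_0$) and $a_{C,i}\neq 0$ for some $i$, one has $C\cap W_{J^*}=\emptyset$. By the minimality of $|J^*|$, we have $|J_C|\geq|J^*|$; if $C\cap W_{J^*}$ were nonempty, there would exist a minimal $\d$-stable $K\subseteq J^*$ with $C\cap W_K\neq\emptyset$, and by Theorem \ref{neverfuse} together with the uniqueness of $J_C$ one obtains $K\sim_\d J_C$. Then $|K|\leq|J^*|\leq|J_C|=|K|$ forces $K=J^*\sim_\d J_C$; since both $J^*$ and $J_C$ lie in $\mathcal I^\d_0$, this forces $J_C=J^*$, a contradiction.

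Next, for any $\d$-stable irreducible $\bH_{J^*}^\sem$-module $\sigma$ and any $\nu\in (V^\vee)^{W_{J^*}\rtimes\<\d\>}$, I form the induced $\bH'$-module $\Ind^{\bH'}_{\bH'_{J^*}}(\sigma\otimes\chi_\nu)$ and apply $\Tr(\,\cdot\,\d, \cdot)$ to $h$. By Proposition \ref{trace-formula} combined with the previous step, every term with $J_C\neq J^*$ contributes zero, and since $f_{J^*,i}\in S(V^{W_{J^*}\rtimes\<\d\>})$ acts on $\chi_\nu$ by the scalar $f_{J^*,i}(\nu)$, what remains is
\begin{equation*}
\sum_{C:\,J_C=J^*,\,i} a_{C,i}\, f_{J^*,i}(\nu)\, \tr^\d(\sigma)(w_C) \;=\; 0.
\end{equation*}
Fixing $\nu$ and varying $\sigma$ over $\d$-stable tempered modules with real central character, Theorem \ref{neverfuse} identifies $\{C:J_C=J^*\}$ with the set of $\d$-elliptic conjugacy classes of $W_{J^*}$, and Clifford theory together with Theorem \ref{t:W-ind}(a) shows that the twisted characters $\tr^\d(\sigma)$ span the space of $\d$-twisted class functions on $W_{J^*}$, thus separating these classes. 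This forces $\sum_i a_{C,i}\, f_{J^*,i}(\nu)=0$ for every such $C$ and every $\nu$; linear independence of $\{f_{J^*,i}\}$ as polynomials on $(V^\vee)^{W_{J^*}\rtimes\<\d\>}$ then yields $a_{C,i}=0$, contradicting the choice of $J^*$.

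The main obstacle is the separation statement in the last step: that the twisted characters $\tr^\d(\sigma)$, as $\sigma$ varies over $\d$-stable tempered $\bH_{J^*}^\sem$-modules with real central character, exhaust the $\d$-twisted class functions on $W_{J^*}$. This is where the homological input of Theorem \ref{t:W-ind}(a) enters essentially, routed through the Clifford-theoretic decomposition of section \ref{sec:3.2}; without it, one could not extract from the vanishing of the full trace sum the individual polynomial identities $\sum_i a_{C,i}\, f_{J^*,i}(\nu)\equiv 0$. The explicit basis of $\overline R^\d_0(\bH)$ produced in section \ref{sec:9} provides a more constructive substitute and also refines the counting statement of Theorem \ref{t:3}, but the qualitative separation above is all that the basis theorem requires.
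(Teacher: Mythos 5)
Your overall strategy is the same as the paper's: use Proposition \ref{p:span} for spanning, then detect coefficients by pairing against traces of modules induced from a minimal $J$, using Proposition \ref{trace-formula} to kill the terms with $C\cap W_{J^*}=\emptyset$, separating the classes $C$ with $J_C=J^*$ by varying $\sigma$, and separating the $f_{J^*,i}$ by varying the unramified character $\chi_\nu$. Your combinatorial reduction (minimal cardinality of $J_{C_0}$, plus uniqueness of $J_C$ up to $\sim_\d$) is a correct variant of the paper's ``minimality of $J'$'' step, although the uniqueness of $J_C$ rests on Proposition \ref{j-j'} rather than on Theorem \ref{neverfuse}.

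The genuine gap is in your final separation step, and in your closing claim that Theorem \ref{t:W-ind}(a) is ``all that the basis theorem requires.'' Theorem \ref{t:W-ind} is stated, and is only available (via \cite{So}), under the hypothesis that the parameter function $k$ is \emph{real-valued}; the Basis Theorem, however, is asserted for an arbitrary complex parameter function, which is the generality emphasized throughout the paper. For non-real $k$ you have no supply of $\d$-stable tempered modules with real central character whose restrictions to $W'$ form a basis of $R(W')_\bQ$, so the claim that the twisted characters $\tr^\d(\sigma)$ separate the $\d$-elliptic classes of $W_{J^*}$ is unjustified, and your induction collapses exactly at the point where the coefficients $a_{C,i}$ with $J_C=J^*$ must be extracted. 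This is precisely why the paper, after invoking Theorem \ref{t:W-ind} for real $k$, specializes $\sigma$ instead to the representations constructed case-by-case in Theorem \ref{t:explicit-basis}(1) (section \ref{sec:9}): those modules exist for every complex $k$, their $W$-structure is independent of $k$ (property (P1)), and their $W$-characters visibly separate the (twisted) elliptic classes. So section \ref{sec:9} is not an optional ``more constructive substitute''; it is the ingredient that makes the linear-independence argument, and hence the Basis Theorem and the Density Theorem, valid for arbitrary complex parameters. If you restrict your statement to real-valued $k$, your argument matches the paper's and is complete modulo the routine Clifford-theoretic bookkeeping you indicate.
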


\begin{proof}
In light of Proposition \ref{p:span} we need to prove that the set is
linearly independent. 
To see this, we proceed by induction and use the formula for the trace
of induced modules in Proposition \ref{trace-formula} to separate the
subsets for various $J'.$
Let 
\begin{equation}\label{e:lin-comb}
\sum_{C,i} a_{C,i} w_C f_{J_C,i}=0
\end{equation}
 be a linear combination.

The base case is $J'=\emptyset$. For every character $\chi_\nu:
\bH_\emptyset=S(V)\rtimes \<\delta\>\to \bC,$ parameterized by $\nu\in (V^\vee)^\delta$, consider the induced
module $X(\nu):=\Ind_{\bH_\emptyset}^\bH(\chi_\nu)$. By Proposition
\ref{trace-formula}, $\Tr(w_C f_{C,i}\delta,X(\nu))=0$ for all
$C\neq\{1\}$ and all $i$. Thus applying $\Tr(-,X(\nu))$ to
(\ref{e:lin-comb}), we get $|W|\sum_{i} a_{1,i}
(f_{\emptyset,i},\nu)=0$ for every $\nu\in W\backslash (V^\vee)^\delta$. This
means that the polynomial function $\sum_i a_{1,i} f_{\emptyset,i}$
vanishes on its natural domain, thus $\sum_i a_{1,i}
f_{\emptyset,i}=0$ in $S(V^{\delta})^{W}$. Since by
construction, $\{f_{\emptyset,i}\}$ are linear independent in
$S(V)^{W\rtimes \<\delta\>}$, it follows that $a_{1,i}=0$ for all $i.$ 

By induction, suppose we are left with a (smaller) linear combination
as in (\ref{e:lin-comb}). Let $J'$ be a minimal element in $\mathcal I^\d_0$ that appears
in this combination. Suppose $C$ is a conjugacy class that occurs and
$C\cap W_{J'}\neq \emptyset.$ By the construction of the spanning set
(see section \ref{span-set setting}) and the minimality of $J',$ we
must have $J'=J_C.$ Let $M$ be a module of $\bH'_{J'}$. Applying
$\Tr(-,\Ind_{\bH'_{J'}}^{\bH'}M)$ to the linear combination, it follows
that
$$|N_{W,\delta}(W_{J'})/W_{J'}|\sum_{C: J_C=J',i} a_{C,i} \Tr(w_C
f_{J',i}\delta,M)=0, \text{ for all }M.$$
Specialize $M$ to $M=\sigma\otimes \chi_\nu$, where $\sigma$ is an irreducible
tempered module with real central character of
$(\bH_{J'})^\sem$, and $\chi_\nu:S(V^{W_J\rtimes \<\delta\>})\to \bC$
is a character indexed by $\nu\in
N_{W,\delta}(W_J)\backslash(V^\vee)^{W_J\rtimes\<\delta\>}.$ By
Theorem \ref{t:W-ind}, when the parameter function $k$ takes real
values, the set of such $\sigma$ separates the $w_C$'s. For arbitrary
parameters $k$, we can specialize $\sigma$ to the representations
explicitly constructed in Theorem \ref{t:explicit-basis}(1) below.
By the same discussion as for $J'=\emptyset$, the unramified
characters $\chi_\nu$ separate the $f_{J',i}$'s. In conclusion,
$a_{C,i}=0$ for all $C$ such that $J_C=J'.$
\end{proof}

\begin{remark}
Theorem \ref{t:basis} implies a description of the cocenter for the extended graded
Hecke algebra $\bH'=\bH\rtimes\<\delta\>$, $\delta^d=1$. From Proposition \ref{cocenter-decomp}, we have 
\begin{equation}
\bar \bH'=\bigoplus_{i=0}^{d-1} \bar\bH^{[i]},\text{ where
}\bH^{[i]} \text{ is the space of }\delta\text{-coinvariants in }\bar \bH_{\delta^i}.
\end{equation}
Now Theorem \ref{t:basis} gives an explicit basis for each $\bar\bH_{\delta^i}.$
\end{remark}

The following corollary is stated here for convenience. It was already
proved in Lemma \ref{l:finite-elliptic} and it is a consequence of the
explicit description of the basis of $\overline \bH_\delta$. The case $\delta=1$ was
proved in the setting of affine Hecke algebras in \cite[Proposition 3.9]{OS} via
different methods.

\begin{corollary}\label{c:dimension-elliptic}
The dimension of the $\delta$-elliptic space $\overline
R^\delta_0(\bH)$ is at most the number of $\delta$-twisted elliptic
conjugacy classes in $W$.
\end{corollary}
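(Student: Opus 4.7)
The plan is to recycle the linear-algebraic step inside the proof of Lemma~\ref{l:finite-elliptic} and then sharpen its conclusion using the explicit basis of $\bar\bH_\delta$ supplied by Theorem~\ref{t:basis}.

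First I would reproduce the bound established in Lemma~\ref{l:finite-elliptic}: if $\pi_1,\dots,\pi_k\in\Irr^\delta\bH$ have linearly independent image in $\overline R^\delta_0(\bH)$, then the operator $A$ of Section~\ref{sec:4.4} preserves linear independence in $R^\delta(\bH)$, and by the independence of characters one chooses $h_1,\dots,h_k\in\bH$ so that $(\Tr(h_i\delta,A\pi_j))_{i,j}$ is invertible. The adjointness relation of Lemma~\ref{l:adjointtrace} converts this into invertibility of $(\Tr(\wti A(h_i)\delta,\pi_j))_{i,j}$, while (\ref{e:Atilde}) says $\wti A$ annihilates $\mathcal E^1_\delta\bH$. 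Consequently the images of the $\wti A(h_i)$ are linearly independent in $\bH/([\bH,\bH]_\delta+\mathcal E^1_\delta\bH)$, giving
$$k\le\dim\bH/\bigl([\bH,\bH]_\delta+\mathcal E^1_\delta\bH\bigr).$$

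Next I would bound the right-hand side using the basis $\{w_C f_{J_C,i}\}$ of $\bar\bH_\delta$. Modulo $\mathcal E^1_\delta\bH$, every basis element attached to a non-elliptic class is killed: for such $C$, the set $J_C$ chosen in Section~\ref{span-set setting} is a proper $\delta$-stable subset of $I$, so $w_C\in W_{J_C}$ and hence $w_C f_{J_C,i}\in\bH_{J_C}\subset\mathcal E^1_\delta\bH$. Thus only the basis elements attached to elliptic classes can survive.

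Finally I would check that each elliptic class contributes exactly one basis element. For $C$ elliptic one has $J_C=I$ and $W_{J_C}=W$; since $\Phi$ is semisimple, $V^W=0$, so $V^{W\rtimes\langle\delta\rangle}\subset V^W=0$ and $S(V^{W\rtimes\langle\delta\rangle})^{N_{W,\delta}(W)}=\bC$. The index $i$ degenerates to a single value, leaving the sole surviving generator $w_C$ per elliptic class. Combining this with the inequality above yields the desired bound.

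There is no genuine obstacle in this derivation: it is essentially a bookkeeping corollary of Theorem~\ref{t:basis} together with the inequality inherited from Lemma~\ref{l:finite-elliptic}. The only point requiring slight care is confirming that for non-elliptic $C$ the minimality built into the construction of $J_C$ in Section~\ref{span-set setting} places $w_C$ inside $W_{J_C}$ (and not merely in some larger parabolic), so that the corresponding spanning contribution genuinely lies in $\mathcal E^1_\delta\bH$.
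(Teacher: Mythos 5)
Your proof is correct and follows essentially the paper's own route: the bound $k\le\dim\bH/\left([\bH,\bH]_\delta+\mathcal E^1_\delta\bH\right)$ from the proof of Lemma \ref{l:finite-elliptic}, combined with the spanning set of Proposition \ref{p:span} (only the spanning property, not the full Basis Theorem, is needed), with non-elliptic classes absorbed into $\mathcal E^1_\delta\bH$ because $w_Cf_{J_C,i}\in\bH_{J_C}$ for proper $J_C$, and each elliptic class contributing a single generator $w_C$ since $V^{W}=0$. The only cosmetic slip is that the invertible matrix directly gives linear independence of the functionals $\Tr(\wti A(\cdot)\delta,\pi_j)$ on that quotient (equivalently, of the images of the $h_i$), rather than of the images of the $\wti A(h_i)$ themselves; the resulting bound is unchanged.
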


\subsection{Density Theorem}\label{sec:7.3} In the notation of section
\ref{sec:4.1},  let $\tr^\delta:\bar\bH_\delta\to R^*_\delta(\bH)$ be
the trace map. 

\begin{theorem}[Density Theorem]\label{t:density} The trace map
  $\tr^\delta:\bar\bH_\delta\to R^*_\delta(\bH)$ is
  injective. Equivalently, if $h\in\bH$ is
  such that $\Tr(h\delta,\pi)=0$ for all $\pi\in\Irr^\delta(\bH)$, then $h\in [\bH,\bH]_\delta.$
\end{theorem}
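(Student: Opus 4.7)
The plan is to deduce the Density Theorem essentially as a reformulation of the Basis Theorem (Theorem \ref{t:basis}): the linear independence argument used there already constructs enough traces to separate the basis of $\bar\bH_\delta$, which is exactly the statement of injectivity of $\tr^\delta$. Concretely, by Proposition \ref{p:span} any $h \in \bH$ can be written as
\[
h \equiv \sum_{C,i} a_{C,i}\, w_C f_{J_C,i} \pmod{[\bH,\bH]_\delta},
\]
and the hypothesis $\tr^\delta(h) = 0$ becomes
\[
\sum_{C,i} a_{C,i}\, \Tr(w_C f_{J_C,i}\delta,\pi) = 0 \qquad \text{for every } \pi \in R^\delta(\bH).
\]
Thus it suffices to show that this family of trace identities forces every $a_{C,i}$ to vanish.

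I would run precisely the inductive argument used in the proof of Theorem \ref{t:basis}, specialized to induced modules. For the base case $J'=\emptyset$, test against the principal series $X(\nu)=\Ind_{\bH_\emptyset}^\bH(\chi_\nu)$: Proposition \ref{trace-formula} kills all terms with $C \neq \{1\}$, and varying $\nu \in (V^\vee)^\delta$ together with the $W$-invariance and linear independence of $\{f_{\emptyset,i}\}$ in $S(V)^{W\rtimes\<\delta\>}$ forces $a_{1,i}=0$. For the inductive step, among the nonzero terms that remain, choose $J' \in \mathcal I^\delta_0$ minimal and test against $\Ind_{\bH'_{J'}}^{\bH'}(\sigma \otimes \chi_\nu)$. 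Proposition \ref{trace-formula} together with the minimality of $J'$ kills all terms except those with $J_C=J'$; varying $\sigma$ separates the different minimal length representatives $w_C \in W_{J'}$, and varying $\nu \in N_{W,\delta}(W_{J'})\backslash (V^\vee)^{W_{J'}\rtimes\<\delta\>}$ together with the linear independence of $\{f_{J',i}\}$ separates the polynomial components. After all $J'$ are exhausted, every $a_{C,i}$ vanishes, so $h \in [\bH,\bH]_\delta$.

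The main obstacle, exactly as in the proof of Theorem \ref{t:basis}, lies in producing enough $\sigma$'s to separate the distinct $w_C$'s with $J_C=J'$ when the parameter function $k$ is an arbitrary complex-valued function. For real $k$ this is handled by Theorem \ref{t:W-ind}(a), which provides a $\bQ$-basis of $R(W'_{J'})_\bQ$ coming from restrictions of real central character tempered $\bH'_{J'}$-modules. For general complex $k$, one replaces tempered modules by the explicit case-by-case family constructed later in Theorem \ref{t:explicit-basis}(1), whose $W'_{J'}$-restrictions continue to separate the minimal length representatives of twisted elliptic classes of $W_{J'}$. Given these separating families, no further ingredient is needed beyond the formal manipulations already carried out in the proof of the Basis Theorem.
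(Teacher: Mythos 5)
Your proposal is correct and is essentially the paper's own proof: the paper also writes the class of $h$ in $\bar\bH_\delta$ via the spanning set from Proposition \ref{p:span} and then invokes verbatim the inductive trace-separation argument from the proof of Theorem \ref{t:basis} (principal series for $J'=\emptyset$, then minimal $J'$ with Proposition \ref{trace-formula}, Theorem \ref{t:W-ind} for real parameters and Theorem \ref{t:explicit-basis}(1) for arbitrary complex ones) to conclude all coefficients vanish.
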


\begin{proof}
Suppose $x=\sum_{C,i} a_{C,i} w_C f_{J_C,i}\in \bar\bH_\delta$ is in
$\ker\tr^\delta.$ This means that for every $\pi\in R^\delta(\bH)_\bQ$,
$\Tr(x\delta,\pi)=0$. The inductive argument in the
proof of Theorem \ref{t:basis} then implies that $a_{C,i}=0$ for all
$C,i,$ and so $x=0.$ 
\end{proof}

In the setting of $\bH'=\bH\rtimes\<\delta\>$, we defined the trace
map $\tr:\bar\bH'\to R(\bH')^*$ in section \ref{sec:4.1}. As a
consequence of Theorem \ref{t:density} and Clifford theory (section
\ref{sec:3.2}), we obtain a density theorem for $\bH'$. 

\begin{corollary} The trace map $\tr:\bar\bH'\to R(\bH')^*$ is
  injective. 
\end{corollary}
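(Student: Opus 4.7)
The plan is to combine the parallel decompositions of $\bar\bH'$ and $R(\bH')_\bC$ established in Section \ref{sec:3.2} with the density theorem for the twisted cocenter (Theorem \ref{t:density}), reducing the corollary block by block to that theorem. By Proposition \ref{cocenter-decomp} and \eqref{groth-decomp},
\[
\bar\bH' = \bigoplus_{i=0}^{d-1} \bar\bH^{[i]}, \qquad R(\bH')_\bC = \bigoplus_{i=0}^{d-1} R^{[i]}(\bH')_\bC.
\]
First I would check that the trace pairing respects these decompositions: for a basis element $\co \rtimes U_{\co, j}$ of $R^{[j]}(\bH')_\bC$ and $h\delta^i \in \bH\delta^i$, Lemma \ref{l:traceH'} gives a value proportional to $\chi_{U_{\co,j}}(\delta^i)$, which vanishes whenever $i \neq j$ by the defining property of $U_{\co, j}$. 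Hence $\tr$ is block-diagonal, and it suffices to show that each block $\tr^{[i]}\colon \bar\bH^{[i]} \to (R^{[i]}(\bH')_\bC)^*$ is injective.

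Next I would identify $\tr^{[i]}$ with a $\Gamma$-coinvariant version of the twisted trace map $\tr^{\delta^i}\colon \bar\bH_{\delta^i} \to R^*_{\delta^i}(\bH)$ of Theorem \ref{t:density}. By Proposition \ref{cocenter-decomp}(2), the source $\bar\bH^{[i]}$ is the space of $\Gamma$-coinvariants of $\bar\bH_{\delta^i}$ (via $h \mapsto h\delta^i$). On the target side, the isomorphism $R^{\delta^i}(\bH)_{\Gamma,\bC} \cong R^{[i]}(\bH')_\bC$ of Section \ref{sec:3.2} dualizes to $(R^{[i]}(\bH')_\bC)^* \cong R^*_{\delta^i}(\bH)^\Gamma$. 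The explicit formula of Lemma \ref{l:traceH'} for $i = j$, namely
\[
\Tr(h\delta^i,\, \co\rtimes U_{\co,i}) = \sum_{\gamma\in\Gamma/\Gamma_\co} \tr^{\delta^i}(X)(\gamma^{-1}(h))
\]
for $X \in \co$, then shows that $\tr^{[i]}$ is precisely the map induced on $\Gamma$-coinvariants by the $\Gamma$-equivariant map $\tr^{\delta^i}$.

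Finally, since $\Gamma = \langle\delta\rangle$ is a finite cyclic group and all constructions are $\bC$-linear, the group algebra $\bC[\Gamma]$ is semisimple, so the functor of $\Gamma$-coinvariants is exact and therefore preserves injectivity. Theorem \ref{t:density} provides the injectivity of each $\tr^{\delta^i}$, so each block $\tr^{[i]}$ is injective and hence so is $\tr$. The only delicate step is the bookkeeping in the second paragraph — verifying that the three identifications (cocenter decomposition, Grothendieck group decomposition, and $\bC$-duality) intertwine the $\Gamma$-actions correctly so that $\tr^{[i]}$ genuinely agrees with the coinvariant map induced by $\tr^{\delta^i}$. This reduces to a direct comparison via Lemma \ref{l:traceH'} and does not appear to pose a real obstacle, as it is precisely parallel to the argument already used in the proof of the trace Paley-Wiener corollary of Section \ref{sec:3}.
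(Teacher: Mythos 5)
Your proposal is correct and follows essentially the same route as the paper: decompose $\bar\bH'$ and $R(\bH')_\bC$ via Proposition \ref{cocenter-decomp} and \eqref{groth-decomp}, use Lemma \ref{l:traceH'} to see the pairing is block-diagonal, and reduce each block to Theorem \ref{t:density}. The paper merely carries out your ``exactness of $\Gamma$-coinvariants'' step by hand, via the identity $dh=\sum_{\gamma}\gamma(h)+\sum_{\gamma\neq 1}(1-\gamma)(h)$ together with claim (a) in the proof of Proposition \ref{cocenter-decomp}, which is the same averaging argument in explicit form.
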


\begin{proof}
{By Proposition \ref{cocenter-decomp}, $\bar \bH'=\oplus_{i=0}^{d-1} \bH \d^i/([\bH', \bH'] \cap \bH \d^i)$. By (\ref{groth-decomp}), $R(\bH')_\bC=\oplus_{i=0}^{d-1} R^{[i]}(\bH')_\bC$. Moreover, for $0 \le i, j<d$, $\tr(\bH \d^i) \mid_{R^{[i]}(\bH')_\bC}=0$ unless $i=j$. Thus $\ker \tr=\oplus_{i=0}^{d-1} \ker \tr ~\cap ~\bH \d^i$. }

Let $h\delta'\in\bH'$, $\delta'=\delta^i,$ be such that
$\tr(h\delta')=0.$ By Lemma \ref{l:traceH'}, this is equivalent with
the condition that for every $X\in\Irr^{\delta'}\bH,$ 
$$\sum_{\gamma\in\Gamma/\Gamma_X}\tr^{\delta'}(X)(\gamma^{-1}(h))=0.$$
Since for every $\gamma_1\in\Gamma_X,$
$\tr(X)(h)=\tr({}^{\gamma_1}X)(h)=\tr(X)(\gamma_1^{-1}(h))$, it follows
that the equivalent condition is
$$\sum_{\gamma\in\Gamma}\tr^{\delta'}(X)(\gamma^{-1}(h))=0.$$
By Theorem \ref{t:density}, we have then that
$\sum_{\gamma\in\Gamma}\gamma(h)\in [\bH,\bH]_{\delta'}.$
 Writing \[d h=\sum_{\gamma}\gamma(h)+\sum_{\gamma\neq 1} (1-\gamma)(h),\]
we see that $h\in (1-\delta)\bH+[\bH,\bH]_{\delta'}.$ By claim (a) in the proof
of Proposition \ref{cocenter-decomp}, this is the same as $h\delta'\in
\bH\delta'\cap [\bH',\bH']$, which is what we wanted to prove.

\end{proof}

\section{Bases of $R(\bH)$}\label{sec:9}

In this section, we exhibit linearly independent sets in $\overline
R^\delta_0(\bH)$ of cardinality equal to the number of
$\delta$-elliptic conjugacy classes in $W$. This implies in particular
that equality holds in Corollary
\ref{c:dimension-elliptic}, and that our sets are bases. Moreover, our
construction is such that the $W$-structure of these bases elements
does not change for various values of the parameter function $k$ of
$\bH$, and in addition the action of the Hecke algebra elements
$\om$ (equivalently $\wti\om$) depends linearly in $k.$

\subsection{}The main result of the section follows.

\begin{theorem}\label{t:explicit-basis} Let $\bH_k=\bH(\Psi,k)$ be a
  graded Hecke algebra associated to a simple root system $\Psi$
  and parameter function $k:R^+\to\bC$ as in Definition
  \ref{d:graded}. Let $\delta$ be an automorphism of the Dynkin
  diagram of $\Psi$ of order less than or equal to $3$.
\begin{enumerate}
\item There exists a $\bZ$-basis of $\overline R^\delta_0(\bH_k)$
  represented by a set of genuine representations
  $\{\pi_1,\dots,\pi_m\}\subset R^\delta(\bH_k)$,
  where $m$ is the number of $\delta$-elliptic conjugacy classes in
  $W$ such that
\begin{enumerate}
\item[(P1)] the restriction $\res_W\pi_j$ is independent of $k$ for every
  $j=1,m$;
\item[(P2)] the actions of $\pi_j(\om)$ $j=1,m$, $\om\in V_\bC$, depend
  linearly in the parameter function $k$. 
\end{enumerate}
\item There exists a $\bQ$-basis of $R^\delta(\bH_k)_\bQ$ satisfying
  properties (P1) and (P2) above.
\end{enumerate}
\end{theorem}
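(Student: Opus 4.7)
The plan is to prove part (1) by an explicit construction case by case, and deduce part (2) from it. For part (2), I would use the filtration $R^\delta(\bH_k)=R^\delta_0\supset R^\delta_1\supset\cdots$ from section \ref{sec:4.2}, whose graded pieces $\overline R^\delta_\ell(\bH_k)$ are generated by images $i_J^I\bigl(\overline R^\delta_0(\bH_{J,k})\bigr)$ for $\delta$-stable $J$ with $|J|=|I|-\ell$ (modulo $\sim_\d$). By induction on $|I|$ we may assume bases with (P1) and (P2) exist for each $\overline R^\d_0(\bH_{J,k})$, and parabolic induction preserves both properties: $\res_W\circ i_J^I=\Ind_{W_J}^W\circ\res_{W_J}$ is manifestly $k$-independent, and the action of $\omega\in V$ on $i_J^I(M)$ is computed from $w\i(\omega)$ acting on $M$ via the commutation relation of Definition \ref{d:graded}(ii), which is linear in $k$. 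Assembling these partial bases yields the desired $\bQ$-basis of $R^\delta(\bH_k)_\bQ$.

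For part (1), I would enumerate the $\d$-elliptic conjugacy classes $C_1,\dots,C_m$ in $W$ using Theorem \ref{min}, separately for each irreducible $(\Psi,\d)$ with $\d$ of order $\le 3$; this is a finite list (classical types, $G_2$, $F_4$, $E_{6,7,8}$, together with the folds by order $2$ or $3$ diagram symmetries). For each class $C_j$, I would attach a genuine $\d$-stable $\bC[W]$-module $\sigma_j$ (the choice guided by the Springer correspondence or, when applicable, by Lusztig's geometric classification from Theorem \ref{t:W-ind}(b)) and promote $\sigma_j$ to an $\bH_k$-module $\pi_j$ by prescribing the $S(V)$-action through the elements $\wti\omega=\omega-p_\omega$ of section \ref{sec:1.3}. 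Concretely, I would let $\wti\omega$ act by a fixed ($k$-independent) operator on $\sigma_j$, so that $\omega$ itself acts as $\wti\omega+p_\omega$; since $p_\omega$ is linear in $k$ and lives in $\bC[W]$, this gives a $V$-action depending linearly on $k$ while leaving the $W$-structure untouched, yielding (P1) and (P2) by construction. The non-trivial condition is that these operators commute among themselves and satisfy the defining relation of Definition \ref{d:graded}(ii)—equivalently that the chosen $W$-module $\sigma_j$ extends to a genuine $\bH_k$-module for every $k$—which is verified case by case.

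Linear independence of $\{\pi_j\}_{j=1}^m$ in $\overline R^\delta_0(\bH_k)$ would follow from a trace-pairing argument against the spanning set $\{w_{C_j}f_{J_{C_j},i}\}$ of $\bar\bH_\delta$ of Theorem \ref{t:basis}. Since $C_j$ is elliptic, $J_{C_j}=I$, so by Proposition \ref{trace-formula} the functional $\Tr(w_{C_j}\d,\,\cdot\,)$ vanishes on every module induced from a proper $\d$-stable parabolic and therefore descends to $\overline R^\delta_0(\bH_k)$. Arranging the $\pi_j$ and $w_{C_j}$ in compatible orderings (refining by central character or by a Springer filtration), I would exhibit the matrix $(\Tr(w_{C_i}\d,\pi_j))_{i,j}$ as upper triangular with non-zero diagonal, a property that can be checked at $k=0$ (where $\bH_0\cong \bC[W]\otimes S(V)$ and the computation reduces to a character computation on $W$) and is preserved under deformation in $k$ by (P2). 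Combined with the upper bound $\dim\overline R^\delta_0(\bH_k)\le m$ of Corollary \ref{c:dimension-elliptic}, this forces $\{\pi_j\}$ to be a basis.

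The main obstacle is the case-by-case construction of the $\sigma_j$ for the exceptional types, especially $E_7$ and $E_8$, and for the twisted cases $A_{n-1}$, $D_n$, $D_4$, and $E_6$: in each case one has to exhibit a specific $\d$-stable $W$-module together with a verified $\bH_k$-module structure such that the resulting trace matrix against the elliptic portion of the spanning set is non-degenerate. The use of the elements $\wti\om$ and the identity $\d(p_\om)=p_{\d(\om)}$ (which relies on the hypothesis that $k$ is $\d$-invariant) makes the $\d$-stability and the linear $k$-dependence transparent, but the existence of a suitable $\sigma_j$ for each elliptic class, and the verification that the pairing matrix is triangular, ultimately rest on a uniform case analysis that exploits Lusztig's classification for real parameters and extends it by the deformation described above.
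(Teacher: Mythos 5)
Your overall route is the same as the paper's: part (2) is deduced from part (1) via the decomposition of $R^\delta(\bH_k)_\bQ$ into pieces parabolically induced from $\delta$-stable Levi subalgebras (the paper phrases this through the associated graded of the filtration, including the unramified twists $\chi_\nu$, which your sketch should make explicit), and part (1) is proved by an explicit case-by-case construction, with linear independence obtained by pairing against the elliptic classes $w_C\d$ of the cocenter and invoking the upper bound of Corollary \ref{c:dimension-elliptic}. Your observation that $\Tr(w_C\d,\cdot)$ kills $R^\delta_{\Ind}(\bH)$ by Proposition \ref{trace-formula}, so that only the $W'$-structure of the $\pi_j$ enters the pairing matrix, is correct and is exactly how the paper argues, e.g., in type $B_n$ where the matrix is the character table of $S_n$.

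The genuine gap is in the existence step, which is the actual content of the theorem. Your uniform recipe --- attach to each $\d$-elliptic class a $\d$-stable irreducible $W$-type $\sigma_j$ and let each $\wti\om$ act by a fixed $k$-independent operator (typically $0$) --- demonstrably does not suffice: in $F_4$ the nine $W$-types admitting $\wti\om\mapsto 0$ give only five independent classes in $\overline R_0(\bH)$, and the remaining four basis elements must be supported on sums $\sigma'+\sigma$ of two $W$-types; in ${}^2A_{n-1}$ the single-$W$-type lifts $\pi^A_{n,k}(\sigma)$ are not $\delta$-stable (since $\delta$ sends a module to its contragredient), and the paper instead uses unitary parabolic induction from non-$\delta$-stable Levi subalgebras; similarly ${}^3D_4$ and ${}^2E_6$ use induced or geometrically constructed tempered modules attached to non-$\delta$-stable subsets, and types $E$ rest on Reeder's elliptic pairing results rather than a direct triangularity check. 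So the key assertion ``for each elliptic class there is such a $\sigma_j$'' is false as stated, and the constructions have to be of several different kinds, verified type by type --- which is precisely the work your sketch defers. Two smaller points: your pairing argument only yields $\bQ$-linear independence, hence a $\bQ$-basis of $\overline R^\delta_0(\bH_k)_\bQ$, whereas part (1) claims a $\bZ$-basis, so an integrality argument is still needed; and since the entries $\Tr(w_C\d,\pi_j)$ depend only on the $W'$-structure, they are constant in $k$ by (P1) alone, so the appeal to a ``deformation preserved by (P2)'' is superfluous.
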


The proof of (1) will occupy the rest of the section; we exhibit an
explicit basis $\{\pi_1,\dots,\pi_m\}$ for every pair $(\Psi,\delta).$
We remark that except for simply-laced systems and parameter $k\equiv
0$, or certain special values of $k$ in type $F_4$, the bases we give consist of
elements of $\Irr^\delta(\bH)_\el$ (for any parameter function). In
type $F_4$, for almost all values of the parameter $k$, the same is
true; however for a few special values of $k$ some of the $\pi_j$ may
become reducible.

\begin{proof}[Proof of (2)] 
Let $\overline R^\delta_\ell(\bH)$ be as in (\ref{e:graded-Groth}) and
$\overline R^\delta(\bH)=\oplus_\ell \overline R^\delta_\ell(\bH)$ be
the associated graded object. As $\bQ$-vector spaces
$R^\delta(\bH)_\bQ\cong \overline R^\delta(\bH)_\bQ,$ so it is
sufficient to construct a basis of $\overline R^\delta(\bH)_\bQ$ with
the desired properties.

Recall the set $\C I^\delta_0$ from
  section \ref{span-set setting}.  For every $J\in \C I^\delta_0$, the
  parabolic subalgebra $\bH_J$ decomposes as $\bH_J=\bH_J^\sem\otimes
  S(V^{W_J}).$ Then we have the following
  decomposition:
\begin{equation}
\begin{aligned}
\overline R^\delta(\bH)_\bQ&=\bigoplus_{J\in \C I^\delta_0}
i_J(\overline R^\delta_0(\bH_J^\sem)_\bQ\otimes
(S(V^{W_J\rtimes\<\delta\>})^{N_{W,\delta}(W_J)})^*),\\
\end{aligned}
\end{equation}
where the subgroup of unramified characters
$(S(V^{W_J\rtimes\<\delta\>})^{N_{W,\delta}(W_J)})^*$ can be
canonically identified with $N_{W,\delta}(W_J)\backslash
(V^\vee)^{W_J\rtimes\<\delta\>}.$ Let $\C B_{J,0}:=\C B( R^\delta_0(\bH_J^\sem))$
be the basis of $R^\delta_0(\bH_J^\sem)$ given by (1). Then the
desired basis for $\overline R^\delta(\bH)_\bQ$ (and so of
$R^\delta(\bH)_\bQ$) is 
\begin{equation}
\left\{ i_J(\pi\otimes \chi_\nu):\ J\in\C I^\delta_0,\ \pi\in \C B_{J,0},\
\nu\in N_{W,\delta}(W_J)\backslash
(V^\vee)^{W_J\rtimes\<\delta\>} \right\}.
\end{equation}

\end{proof}

\subsection{$A_{n-1}$}\label{sec:9.1} Let $\bH_{n,k}^A$ be the graded Hecke
algebra of $GL(n)$ with (constant) parameter $k$ (for simplicity of formulas we consider $GL(n)$
rather than $SL(n)$) generated by $w\in S_n$ and
$\{\ep_1,\dots,\ep_n\}$. Let $s_{i,j}$ denote the reflection in the
root $\ep_i-\ep_j.$ As it is well-known, there is a surjective algebra
morphism
\begin{equation}
\phi_k: \bH_{n,k}^A\to \bC[S_n],\quad \begin{cases}\phi_k(w)=w,\ w\in
  S_n;\\ \phi_k(\ep_i)=k(s_{i,i+1}+s_{i,i+2}+\dots+s_{i,n}),\ 1\le
  i<n;\\
\phi_k(\ep_n)=0.
\end{cases}
\end{equation}
Using the adjoint map $\phi_k^*: R(S_n)\to R(\bH_{n,k}^A)$, we can lift
every irreducible $S_n$-representation to an irreducible (in general
non-hermitian) $\bH_{n,k}^A$-module. If $\sigma$ is a partition of $n$
parameterizing an irreducible $S_n$-module, denote by
$\pi_{n,k}^A(\sigma)$ the resulting simple $\bH_{n,k}^A$-module.

There is a single elliptic conjugacy class in $S_n$, the class of
$n$-cycles. The space $\overline R_0(\bH_{n,k}^A)$ is one dimensional
spanned by the class of the trivial $\bH_{n,k}^A$-module
$\pi_{n,k}((n)).$ 

\subsection{${}^2A_{n-1}$}Let $\delta$ be the automorphism of order $2$ of the Dynkin diagram of
type $A_{n-1}.$ The elliptic $\delta$-twisted conjugacy classes in
$S_n$ are in one-to-one correspondence with partitions of $n$ where
every part is odd, see \cite[\S 7.14]{He07}. Every irreducible
$S_n$-representation is $\delta$-stable, i.e., $\Irr^\delta S_n=\Irr
S_n$. The representations $\pi_{n,k}(\sigma)$ constructed in section
\ref{sec:9.1} may seem therefore like good candidates for constructing
a basis in $\overline R_0^\delta(\bH^A_{n,k})$, but the problem is that
they are not typically $\delta$-invariant. This is because $\delta$
maps an irreducible $\bH$-module to its contragredient, and the
modules $\pi_{n,k}(\sigma)$ are not self-contragredient in general. 
A basis of $\overline R_0^\delta(\bH_{n,k}^A)$ is instead given by the
unitary induced modules 
\begin{equation}
\{\Ind_{\bH_{J(\sigma),k}^A}^{\bH_{n,k}^A}(\triv):\ \sigma \text{
  partition of $n$ into distinct parts}\},
\end{equation}
where $J(\sigma)$ is the subset of $I$ corresponding to the partition
$\sigma$; more precisely, if $\sigma=(n_1,n_2,\dots,n_\ell)$, then
$\bH_{J(\sigma),k}^A=\prod_{i=1}^\ell \bH_{n_i,k}^A.$

\subsection{$B_n/C_n$}\label{sec:9.2} The set of elliptic conjugacy
classes in $W(B_n)$ is in one-to-one correspondence with partitions of
$n$, \cite{Ca2}. For every partition $\lambda=(\lambda_1,\dots,\lambda_k)$ of $n$, let $w_\lambda$ be a
representative of the corresponding elliptic conjugacy class,
explicitly, $w_\lambda$ is a Coxeter element for
$W(B_{\lambda_1})\times\dots\times W(B_{\lambda_k}).$

From Definition \ref{d:graded}, one sees that there is an
isomorphism between the graded Hecke algebra of type $B_n$ with
parameters $k_1$ on the long roots and $k_2$ on the short roots, and
the graded Hecke algebra of type $C_n$ with parameters $k_1$ on the
short roots and $2k_2$ on the long roots. Because of this isomorphism,
we consider only the graded Hecke algebra of type $B_n$ (with
arbitrary parameters).

Let $\bH_{n,k_1,k_2}^B$ be the graded Hecke algebra of type $B_n$ with
parameter $k_1$ on the long roots and parameter $k_2$ on the short
roots. Let $W(B_n)$ be the Weyl group of type $B_n$ generated by
simple reflections $s_{i,i+1}$ for the roots $\ep_i-\ep_{i+1}$ and
$s_n$ for the root $\ep_n$.
In \cite{CK}, for every partition $\sigma$ of $n$, one
constructed an irreducible unitary $\bH_{n,k_1,k_2}^B$-module
$\pi^B_{n,k_1,k_2}(\sigma)$ via the assignment
\begin{equation}
\pi^B_{n,k_1,k_2}(\sigma):\begin{cases} s_{i,i+1}\mapsto
  \pi^A_{n,k_1}(\sigma)(s_{i,i+1}),\ 1\le i\le n-1;\\
s_n\mapsto \text{Id};\\
\ep_i\mapsto k_2\text{Id}+\pi^A_{n,k_1}(\sigma)(\ep_i),\ 1\le i\le n.
\end{cases}
\end{equation}
By definition $\pi^B_{n,k_1,k_2}(\sigma)|_{W(B_n)}=\sigma\times 0,$ in the
bipartition notation for irreducible $W(B_n)$-representations (\cite{Ca}).

An equivalent way to define $\pi^B_{n,k_1,k_2}(\sigma)$ is by the
assignment
\begin{equation}
\begin{aligned}
&w\mapsto (\sigma\times 0)(w), \quad w\in W(B_n);\\
&\wti\om\mapsto 0,\quad \om\in V=\bC\<\ep_1,\dots,\ep_n\>;
\end{aligned}
\end{equation}
here $\wti\om$ is defined as in (\ref{e:omtilde}) and it depends
linearly in $k_1,k_2$.

From the explicit description of representatives $w_\lambda$, it is
obvious that the matrix $(\Tr
(w_\lambda,\pi_{n,k_1,k_2}^B(\sigma)))_{\lambda,\sigma}$ equals the
character table of $S_n$,
$(\Tr((\lambda),\sigma))_{\lambda,\sigma}$, and thus it is invertible.

\subsection{$D_n$ and ${}^2D_n$}\label{sec:9.3}
Let $\bH_{n,k}^D$ be the graded Hecke algebra of type $D_n$ with constant
parameter $k$. Then $\bH_{n,k,0}^B\cong \bH_{n,k}^D\rtimes
\langle\delta\rangle,$ where $\delta$ is the automorphism of order $2$
of $D_n$ interchanging the simple roots $\ep_{n-1}-\ep_n$ and
$\ep_{n-1}+\ep_n.$

In particular, $\overline R_0(\bH_{n,k,0}^B)\cong \overline
R_0(\bH_{n,k}^D)\oplus\overline R_0^\delta(\bH_{n,k}^D),$ and
therefore this case follows from the general type $B_n$ case using
that the number of elliptic conjugacy classes in $W(D_n)$ plus the
number of $\delta$-twisted elliptic conjugacy classes in $W(D_n)$
equals the number of elliptic conjugacy classes in $W(B_n)$ (see
\cite[\S 7.20]{He07}).

Denote the restriction of  the module $\pi^B_{n,k,0}(\sigma)$ constructed before to
$\bH^D_{n,k}$ by $\pi^D_{n,k}(\sigma).$ Since $(\sigma\times 0)|_{W(D_n)}$ is irreducible, it follows that
$\pi^D_{n,k}(\sigma)$ is a unitary irreducible $\bH^D_{n,k}$-module.

\subsection{${}^3D_4$}\label{sec:D4Z3} Let $\delta$ be the
automorphism of order $3$ of the Dynkin diagram of type $D_4.$ From
\cite[Lemma 7.21]{He07}, there are
four conjugacy classes of $\delta$-twisted elliptic elements in
$W(D_4).$

A set of four linearly independent $\delta$-fixed $\bH(D_4)$-modules
in $\overline R^\delta_0(\bH)$ is formed of
\begin{equation}
\pi^D_{4,k}(\sigma),\text{ where $\sigma$ is any of the partitions
}(4),~(31),~(22), \quad \text{ and } \pi',
\end{equation}
where 
\begin{equation}
\Ind_{\bH(GL(3))}^{\bH(D_4)}\pi^A_{3,k}((3))=\pi'\oplus
\pi_{4,k}^D((22)),\text{ as }\bH(D_4)\text{-modules}.
\end{equation}
Notice that this induced module is zero in $\overline R_0(\bH(D_4))$,
but since $A_2\subset D_4$ is not fixed under $\delta$, it contributes
to $\overline R_0^\delta(\bH(D_4)).$

\subsection{$G_2$}\label{sec:9.4} For exceptional groups, we use the
labeling of irreducible $W$-representations from \cite{Ca}. 

There are
three elliptic conjugacy classes in $W(G_2).$ The irreducible
$W(G_2)$-representations $\phi_{1,0}$, $\phi_{1,6}$, $\phi'_{1,3}$,
$\phi''_{1,3}$ and $\phi_{2,2}$ can be extended to unitary
$\bH$-modules by letting $\wti\om$ act by $0$ for all $\om$, see
\cite{BM2}. In $\overline R_0(\bH)$, $\phi_{1,0}$ and $\phi_{1,6}$ are
in the same class, and so are $\phi'_{1,3}$ and $\phi''_{1,3}.$ The
set of $\bH$-modules supported on
\begin{equation}
\phi_{1,0},\ \phi'_{1,3},\ \phi_{2,2}
\end{equation}
is linearly independent in $\overline R_0(\bH).$ The table containing
the modules with their central characters is Table \ref{t:G2}. The
central characters are the residual central characters from
\cite{Op}. The $\om_i$ are the fundamental weights and the parameters
$k,k'$ are on long and short roots, respectively.

\begin{table}[h]
\caption{$G_2$\label{t:G2}}
\begin{tabular}{|c|c|}
\hline
central character &$W$-structure\\

\hline
$k\om_1+k'\om_2$ &$\phi_{1,0}$\\
\hline
$k\om_1+(-k+k')\om_2$ &$\phi'_{1,3}$\\
\hline
$k\om_1+\frac 12 (-k+k')$ &$\phi_{2,2}$\\
\hline
\end{tabular}
\end{table}

\subsection{$F_4$}\label{sec:9.5}
There are nine elliptic conjugacy classes in $W(F_4).$ There are nine
irreducible $W(F_4)$-representations that can be lifted to
$\bH$-modules by letting $\wti\om$ act by zero. They form only five
independent classes in $\overline R_0(\bH)$, for which a set of
representatives is
\begin{equation}
\phi_{1,0},\ \phi'_{1,12},\ \phi'_{2,4},\ \phi''_{2,4},\ \phi_{4,8}.
\end{equation}
There remain four modules to be specified. If an $\bH$-module is not
supported on a single $W$-type, the next simplest construction is when the
module is supported on a $W$-representation
$\sigma'+\sigma,$ where $\sigma$ and
$\sigma'$ are irreducible
$W(F_4)$-representations and $\Hom_{W}[\sigma',\sigma\otimes\refl]\neq
0.$ . We are able to construct the remaining four
modules in this way: $\phi_{4,1}+\phi_{1,0}$, $\phi'_{4,7}+\phi'_{1,12}$ and 
$\phi'_{8,3}+\phi''_{2,4}$ and $\phi_{6,6}''+\phi_{4,2}$. 
Notice that the first of them, supported on
$\phi_{4,1}+\phi_{1,0}$ (``the affine reflection representation'') also falls
into the framework of the $W$-graph
construction of \cite{L3} in the setting of affine Hecke
algebras. 
In the last module, we remark that $\phi_{6,6}''$ is the second exterior
power of the reflection representation $\phi_{4,2}$.

These
results are tabulated in Table \ref{t:F4}. The central characters are
the residual ones from \cite{Op}. The convention is that $\om_i$ are
the fundamental weights and $k$ is the parameter on the long roots,
while $k'$ is the parameter on the short roots. In each of the nine
families of representations constructed above, the actions of $\om_i$
(equivalently $\wti\om_i$) are linear in $k$ and $k'.$

\begin{table}[h]
\caption{$F_4$\label{t:F4}}
\begin{tabular}{|c|c|}
\hline
central character &$W$-structure\\

\hline
$k\om_1+k\om_2+k'\om_3+k'\om_4$ &$\phi_{1,0}$\\
\hline
$k\om_1+k\om_2+(-k+k')\om_3+k'\om_4$ &$\phi''_{2,4}$\\
\hline
$k\om_1+k\om_2+(-k+k')\om_3+k\om_4$ &$\phi_{4,1}+\phi_{1,0}$\\
\hline
$k\om_1+k\om_2+(-2k+k')\om_3+k'\om_4$ &$\phi'_{1,12}$\\
\hline
$k\om_1+k\om_2+(-2k+k')\om_3+2k\om_4$ &$\phi'_{8,3}+\phi''_{2,4}$\\
\hline
$k\om_1+k\om_2+(-2k+k')\om_3+k\om_4$ &$\phi'_{4,7}+\phi'_{1,12}$\\
\hline
$k\om_1+k\om_2+(-2k+k')\om_3+(3k-k')\om_4$ &$\phi'_{2,4}$\\
\hline
$k\om_2+(-k+k')\om_4$ &$\phi_{4,8}$\\
&$\phi_{6,6}''+\phi_{4,1}$\\
\hline
\end{tabular}
\end{table}

\subsection{$E$}\label{sec:9.6} This case (as well as every equal
parameter untwisted $\bH$) is covered by the results of \cite{Re} in
terms of the geometric classification of $\bH$-modules \cite{L2}. More
precisely, a set of representatives for a basis of $\overline R_0(\bH)$ is given
by (tempered) modules $\pi(e,\phi)$ parameterized geometrically by pairs
\begin{equation}
(e,\phi),
\end{equation}
where
\begin{enumerate}
\item[(i)] $e$ ranges over a set of representatives for quasi-distinguished (\cite[(3.2.2)]{Re})
nilpotent orbits in the Lie algebra corresponding to the root system;
\item[(ii)] $\phi$ is an irreducible representation of 
$A(e)$, the group of components of the centralizer of $e$ in the
associated adjoint Lie group, such that
\item[(iii)] $\phi$ is a representative for its class in $\overline R_0(A(e))$, the
elliptic representation space for $A(e)$ defined in \cite[\S 3.2]{Re}.
\end{enumerate}
Concretely, for type $E$, for every distinguished $e$ (in the sense of
Bala-Carter \cite{Ca}), one allows every $\phi$ of ``Springer type'',
while for $e$ quasi-distinguished, but not distinguished, one allows
only $\phi=1.$ For example, for $E_6$, a basis is labeled by the five pairs
\begin{equation}
(E_6,1),\ (E_6(a_1),1),\ (E_6(a_3),1),\ (E_6(a_3),\sgn),\ (D_4(a_1),1).
\end{equation}

\subsection{${}^2E_6$} Let $\delta$ be the automorphism of order $2$
of the Dynkin diagram of type $E_6$.  According to \cite[Table III]{GKP}, there are
nine $\delta$-twisted elliptic conjugacy classes in $W(E_6).$ A set of
nine linear independent (in $\overline R_0^\delta(\bH(E_6))$)
$\delta$-fixed irreducible modules is formed of the tempered modules
$\pi(e,\phi)$ (see section \ref{sec:9.6}), for the pairs
\begin{equation}
\begin{aligned}
&(E_6,1),\ (E_6(a_1),1),\ (E_6(a_3),1),\ (E_6(a_3),\sgn),\ (D_5,1),\\
&(D_5(a_1),1),\ (A_4+A_1,1),\ (D_4(a_1),1),\ (D_4(a_1),\refl).
\end{aligned}
\end{equation}
Notice that the subsets of type $D_5$ and $A_4+A_1$ are not
$\delta$-stable. The component group for $D_4(a_1)$ is $A(e)=S_3.$

\subsection*{Acknowledgements} After a first version of this paper was
posted at \texttt{arxiv.org}, M. Solleveld pointed out to us that he proved similar results for real parameter in \cite[Proposition 2.5 \& Theorem 3.1]{So2}.

\smallskip

The first named author thanks the Department of Mathematics
at Hong Kong University of Science and Technology for its
hospitality and support during his visit, when part of the research for this paper
was completed. D. C. was supported in part by NSF-DMS 0968065 and X. H. was supported in part by HKRGC grant 602011. 

%%%%%%%%%%%%%%%%%%%%%%%%%%%%%%%%%%%%%%%%%%%%%%%%%%%%%%%%%%

\ifx\undefined\bysame
\newcommand{\bysame}{\leavevmode\hbox to3em{\hrulefill}\,}
\fi

\end{document}